\renewcommand{\L}{\mathcal{L}}		
\newcommand{\F}{\mathcal{F}}			
\newcommand{\C}{\mathcal{C}}			
\newcommand{\D}{\mathcal{D}}	
\newcommand{\M}{\mathcal{M}}			
\newcommand{\EE}{\mathbb{E}}			
\newcommand{\PP}{\mathbb{P}}			
\newcommand{\RR}{\mathbb{R}}			
\newcommand{\NN}{\mathbb{N}}			
\newcommand{\taudiv}{b}							
\newcommand{\mut}{\gamma}						
\newcommand{\mmax}{M}    						
\newcommand{\mdiv}{m_{\textrm{\tiny\rm div}}}	
\newcommand{\rhog}{g}							
\newcommand{\Sin}{{\mathbf s}_{\textrm{\tiny\rm in}}} 
\newcommand{\MP}{\mathcal{N}}			
\newcommand{\op}{\mathcal{G}}			
\newcommand{\densiteIDE}{r}				
\newcommand{\densiteIDEr}{m}				
\newcommand{\var}{{\textrm{Var}}}								
\newcommand{\eqdef}     {\stackrel{{\textrm{\rm\tiny def}}}{=}}	
\newcommand{\indic}{{{\mathrm\mathbf1}}}							
\newcommand{\umoins}{{\smash{u^{\raisebox{-1pt}{\scriptsize\scalebox{0.5}{$-$}}}}}}
\newcommand{\dif}{{{\textrm{\upshape d}}}}
\newcommand{\norme}[1]{\left\Vert #1 \right\Vert }
\newcommand{\crochet}[1] {\langle #1 \rangle}
\renewcommand{\tilde}{\widetilde}
\newtheorem{theorem}      {Theorem}[section]
\newtheorem{theorem*}     {theorem}
\newtheorem{proposition}  [theorem]{Proposition}
\newtheorem{lemma}        [theorem]{Lemma}
\newtheorem{remark}       [theorem]{Remark}
\newtheorem{corollary}    [theorem]{Corollary}
\newtheorem{hypothesis}   [theorem]{Assumption}
\newtheorem{hypotheses}   [theorem]{Assumptions}
\begin{document}
\title{
Links between deterministic and stochastic approaches for invasion in growth-fragmentation-death models}

\author{Fabien Campillo$^{1,2}$ \and Nicolas Champagnat$^{3,4,5}$ \and Coralie Fritsch$^{3,4,5}$}

\footnotetext[1]{Inria, LEMON, Montpellier, F-34095, France}

\footnotetext[2]{Institut Montpelli\'erain Alexander Grothendieck, Montpellier, F-34095, France}

\footnotetext[3]{Universit\'e de Lorraine, Institut Elie Cartan de Lorraine,
    UMR 7502, Vand\oe uvre-l\`es-Nancy, F-54506, France}

\footnotetext[4]{CNRS, Institut Elie Cartan de Lorraine, UMR
    7502, Vand\oe uvre-l\`es-Nancy, F-54506, France}

\footnotetext[5]{Inria, TOSCA, Villers-l\`es-Nancy, F-54600, France \protect \\ 
				E-mail: {fabien.campillo@inria.fr}, {nicolas.champagnat@inria.fr},
				 {coralie.fritsch@inria.fr}}


\maketitle

\begin{abstract}
We present two approaches to study invasion in growth-fragmentation-death models. The first one is based on a stochastic individual
  based model, which is a piecewise deterministic branching process with a continuum of types, and the second one is based on an
  integro-differential model. The invasion of the population is described by the survival probability for the former model and by an
  eigenproblem for the latter one. We study these two notions of invasion fitness, giving different characterizations of the growth
  of the population, and we make links between these two complementary points of view. In particular we prove that the two approaches
  lead to the same criterion of possible invasion. Based on Krein-Rutman theory, we also give a proof of the existence of a solution
  to the eigenproblem, which satisfies the conditions needed for our study of the stochastic model, hence providing a set of
  assumptions under which both approaches can be carried out. Finally, we motivate our work in the context of adaptive dynamics in a
  chemostat model.
  
\paragraph{Keywords:}
growth-fragmentation-death model,
individual-based model,
integro-differential equation,
infinite dimensional branching process,
eigenproblem,
piecewise-deterministic Markov process,
invasion fitness,
bacterial population.

\paragraph{Mathematics Subject Classification (MSC2010):}
60J80, 60J85, 60J25, 35Q92, 45C05, 92D25.
\end{abstract}

\section{Introduction}
\label{intro}

A chemostat is an experimental biological device allowing to grow micro-organisms in a controlled environment. It was first developed by \cite{monod1950a} and \cite{novick1950a} and is now widely used in biology and industry, for example for wastewater
treatment \citep{Bengtsson2008a}.
The chemostat is a continuous culture method for maintaining a bacterial ecosystem growing in a substrate. Bacteria are cultivated in a container with a supply of substrate and a biomass and environment withdrawal so that the volume within the container is kept constant. An important aspect still little discussed in modeling terms, is the evolution of bacteria that adapt to the conditions maintained by the practitioner in the chemostat \citep{Wick2002a, kuenen2009a}.

The theory of Adaptive Dynamics proposes a set of mathematical tools to analyze the adaptation of biological populations \citep{Metz1996a, dieckmann1996a, geritz1998a}. These tools are based on the description of evolution as an adaptive walk
\citep{gillespie1983a} and rely heavily on the notion of \emph{invasion fitness} \citep{Metz1992a}. The invasion fitness describes the
ability of a mutant strain to invade a stable resident population, and its mathematical characterization requires to analyze the
long-time growth or decrease of a mutant population in the fixed environment formed by the resident population. The goal of this work
is to study the invasion fitness in mass-structured models, where bacteria are characterized by their mass, which grow continuously
over time, and is split between offspring at division times. More generally, we will consider general growth-fragmentation-death
models, covering the case of bacteria in a chemostat.

The growth of a mutant population in a chemostat can be modeled either in a continuous or a discrete population. In the
mass-structured case, the former is typically represented as an integro-differential equation of growth-fragmentation \citep{fredrickson1967a, Doumic2010a}
and the latter as a stochastic individual-based model \citep{deangelis1992a, champagnat2008a} where individuals
are characterized by their mass \citep{campillo2014d, fritsch2014a, doumic2012a}. The latter is typically a
piecewise deterministic Markov process since the growth of individual masses is usually modeled deterministically and the birth and
death of individuals are stochastic. This class of models has received a lot of interest in the past years \citep{jacobsen2006a}.
Adaptive dynamics were already studied in deterministic or stochastic chemostat models without mass structure in \citep{Doebli2002, mirrahimi2012a, champagnat2014a}. Age-structured stochastic models were also studied by \cite{TranChi2009a}, but their results do not
extend to the case of mass-structure since the mass of the offsprings at a division time is random, while their age is
always 0 for age-structured models.

Deterministic and stochastic models are often considered equivalent in adaptive dynamics, but this is not always
the case. In particular, the notion of invasion fitness has a completely different biological meaning in each case  \citep{Metz1992a, metz2008a}. In a stochastic discrete model, it is defined as the probability of invasion (or survival) of a
branching process describing the mutant population initiated by a single mutant individual. In a deterministic model, it is defined
as the asymptotic rate of growth (or decrease) of the mutant population, characterized as the solution of an eigenproblem. The former
is relevant to study invasions of mutant populations which appear separately and with one initial individual in the population, whereas the latter is
meaningful when several mutant strains are present in a small density simultaneously, and, under the competitive exclusion principle, the successful one is the one which grows
faster.

The goal of the present work is to present the two points of view in a unified framework and to study the link between these two notions of
invasion fitness in the context of growth-fragmentation-death models. In particular, we prove that the two approaches lead to the same criterion of
possible invasion, although the link between the two notions of fitness is far from obvious. This means that one can choose
arbitrarily the point of view which is more suited to the particular problem or application under study. For example, the invasion
fitness is simpler to characterize in the deterministic model, and is hence more convenient to study qualitative evolutionary
properties of practical biological situations. It is also less costly to compute the deterministic invasion fitness using classical numerical
approximation methods than to compute the extinction probability using Monte-Carlo or iterative methods \citep{fritsch2015b}. In addition, the criterion of invasion is more straightforward in the deterministic case (sign of the eigenvalue) than in the
stochastic one (probability of extinction equal to or different from 1). Conversely, for modeling purpose, the stochastic model is more
convenient in small populations, which is typically the case in invasion problems. The invasion fitness also provides further
information on the stochastic model, since the extinction probability depends of the initial state of the population.

The two approaches are also complementary from the mathematical point of view. For example, the existence of a solution to the
eigenvalue problem facilitates the study of the individual-based model since it provides natural martingales (see Lemma \ref{lemme.martingale.bornee} below).
Conversely, the stochastic process gives a new point of view to PDE analysis questions, like the maximum principle (see Corollary \ref{da.vitesse.cv}).
More generally, it allows to give a description of the population generation by generation, and can help to prove results that would
not be obvious with a purely deterministic approach. An example is given by monotonicity properties of the invasion fitnesses with
respect to the substrate concentration in a chemostat, as shown by \citep{campillo2015b}.

Our results also give the precise conditions that should satisfy the solution to the eigenvalue problem to make the link between the
two notions of invasion fitness. 
The results of \cite{Doumic2007a} and \cite{Doumic2010a} about existence of eigenelements do not cover some biological cases as bacterial Gompertz growth, so we give a full proof of this result based on Krein-Rutman theory, hence providing a set of assumptions under
which both approaches can be carried out.
We conclude by emphasizing that, although general results making the link between the
survival probability in multitype branching processes and the dominant eigenvalue of an appropriate matrix are well-known, our study
is non-trivial because the piecewise-deterministic stochastic process is an infinite dimensional branching process (actually with
a continuum of types) and the eigenvalue problem itself is non-trivial. In particular, a key technical step in our work is Proposition \ref{majoration.esp.Nt} giving a control of the expected total population size.


Section \ref{modele} is devoted to the description of the deterministic and stochastic models that we study. 
Section \ref{sec.extinction.proba} is devoted to the survival probability in the stochastic individual-based model. In particular, we characterize this probability as
the solution of an integral equation and we prove that a positive probability of survival for one initial mass implies the same
property for all initial masses. 
In Section \ref{subsec.stoch.proc.eigenpbl}, we give a stochastic representation of the solution of the deterministic problem and deduce some bounds on these solutions. The precise link between the invasion criteria of the two models is given in Theorem \ref{th.critere} of Subsection \ref{subsec.invasion.fitness}. 
In Section \ref{sec.gfd.eigenpbl}, we give a set of assumptions
under which we can prove the existence of a solution to the eigenvalue problem satisfying the conditions of our main results of Section \ref{sec.lien.approches}. 
Section \ref{sec.chemostat} is devoted to a more detailed discussion of the motivation of our work in the context of adaptive dynamics in chemostat models.

\section{Modeling of the problem}
\label{modele}

In this section, we present two descriptions of the growth-fragmentation-death model determined by the mechanisms of the Section \ref{subsec.mecha}. The first one, presented in Section \ref{subsec.edp}, is deterministic whereas the second one, presented in Section \ref{subsec.IBM}, is stochastic. The determinitic description is valid only in large population size whereas the stochastic one is valid all the time but is impossible to simulate, in an exact way, in large population size.

\subsection{Basic mechanisms}
\label{subsec.mecha}

We consider a growth-fragmentation-death model in which one individual is characterized by its mass $x \in [0,\mmax]$ where $\mmax$ is the maximal mass of individuals and is affected by the following mechanisms:

\begin{enumerate}
\item  \textbf{Division:} each individual, with mass $x$, divides at rate $\taudiv(x)$, into two individuals with masses $\alpha \, x$ and $(1-\alpha)\,x$, where the proportion $\alpha$ is distributed according to the kernel of probability density function $q(\alpha)$ on $[0,1]$.
\begin{center}
\includegraphics[width=5cm]{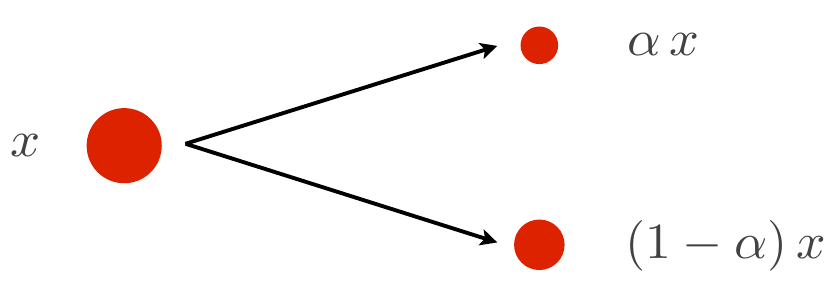}
\end{center}

\item \textbf{Death:} each individual dies at rate $D$.

\item \textbf{Growth:} between division and death times, the mass of an individual grows at speed $g:[0,\mmax]\to\RR_+$, i.e.
\begin{align*}
  \frac{\dif}{\dif t} x_t
  = 
  g(x_t)\,.
\end{align*}
\end{enumerate}

We set $A_t$ the growth flow defined for any $t\geq 0$ and $x\in[0,\mmax]$ by
\begin{align}
\label{def.flot}
  \frac{\dif}{\dif t}A_t(x) = g(A_t(x))\,,
  \qquad A_0(x)=x\,.
\end{align}

Throughout this paper we assume the following set of assumptions.

\begin{hypotheses}
\label{da.hypo.model.reduit}
\begin{enumerate}
\item The kernel $q$ is symmetric with respect to $1/2$: pour tout $\alpha \in [0,1]$,
$
	q(\alpha) = q(1-\alpha)\,.
$

\item \label{g.0.M} $g(0)=g(M)=0$ and  $g(x)>0$ for any $x \in (0,M)$ .

\item \label{hyp.g}$g\in C[0,\mmax]\cap C^1(0,\mmax)$

\item $b\in C[0,\mmax]$ and there exists $\mdiv \in [0,M)$ and $\bar \taudiv>0$ such that
\begin{align*}
	\taudiv(x)=0 & \textrm{ if } x \leq \mdiv \,, 
\\
	0<\taudiv(x)\leq\bar \taudiv & \textrm{ if } x \in (\mdiv,M)\,.
\end{align*}
\end{enumerate}
\end{hypotheses}

Assumption \ref{da.hypo.model.reduit}-\ref{hyp.g} ensures existence and uniqueness of the growth flow defined by \eqref{def.flot} for $x\in(0,\mmax)$ until the exit time $T_{\text{exit}}(x)$ of $(0,\mmax)$ which can be finite. According to Assumption \ref{da.hypo.model.reduit}-\ref{g.0.M}, the flow can be defined as constant in 0 and $\mmax$ (this flow is not necessary unique). Moreover, $A\in C^1(\D)$ with $\D=\{(t,x),\, t<T_{\text{exit}}(x) \}$ \cite[Th. 6.8.1]{demazure2000a}.

\subsection{Growth-fragmentation-death integro-differential model}
\label{subsec.edp}

The deterministic model associated to the previous mechanisms is given by the integro-differential equation
\begin{align*}
\frac{\partial}{\partial t} \densiteIDEr_t(x)
	&=
	\op m_t(x)
\end{align*}
where $\densiteIDEr_t(x)$ represents the density of individuals with mass $x$ at time $t$, with the initial condition $\densiteIDEr_0$ and $\op$ is defined  or any $f\in C^1(0,\mmax)$, $x\in(0,\mmax)$ by
\begin{align*}
  \op f(x)
  \eqdef
  - \partial_x (g(x)\,f(x))
  - (D+\taudiv(x))\, f(x)
  + 2\int_0^\mmax \frac{\taudiv(z)}{z}\, 
		q\left(\frac{x}{z}\right)\, 
		f(z)\,\dif z\,.
\end{align*}
The adjoint operator $\op^*$ of $\op$ is defined for any $f\in C^1(0,\mmax)$, $x\in(0,\mmax)$ by
\begin{align}
\label{def.G.star}
  \op^* f(x)
  \eqdef
  - (D+\taudiv(x))\, f(x)
  + g(x)\,\partial_x f(x)
  + 2\,\taudiv(x)\,\int_0^1q(\alpha)\,f(\alpha\, x)\,\dif \alpha\,.
\end{align}

We consider the eigenproblem
\begin{subequations}
\label{eq.eigenproblem}
\begin{align}
\op \hat u(x) = \Lambda \, \hat u(x)\,,
\end{align}
\begin{align}
   \lim_{x\to 0}g(x)\, \hat u(x) &= 0\,,
   & 
   D+\Lambda &>0\,,
   &
   \hat u(x) &\geq 0\,,
   & 
   \int_0^\mmax \hat u(x)\,\dif x &= 1
\end{align}
\end{subequations}
and the adjoint problem
\begin{subequations}
\label{eq.eigenproblem.dual}
\begin{align}
\label{eigenequation.dual}
\op^* \hat v(x) = \Lambda \, \hat v(x)\,,
\end{align}
\begin{align}
   \hat v(x) &\geq 0\,,
   & 
   \int_0^\mmax \hat v(x)\,\hat u(x)\,\dif x &= 1\,.
\end{align}
\end{subequations}

\medskip

The eigenvalue $\Lambda$ is then interpreted as the exponential growth rate (we will give an individual justification of this interpretation below). The sign of $\Lambda$ gives an explosion criterion of the population: if $\Lambda>0$ then the population goes to infinity ; if $\Lambda<0$ then the population goes to extinction ; if $\Lambda=0$ then the population cannot explode.

\medskip

Existence and uniqueness of a solution $(\hat u,\hat v, \Lambda)$ of the system \eqref{eq.eigenproblem}-\eqref{eq.eigenproblem.dual} were proved for models which are relatively close to ours. \citet{Doumic2007a} proved this result for a growth-fragmentation model which is structured by mass and age. \citet{Doumic2010a} studied a mass-structured growth-fragmentation model with unbounded individual masses.
Since none of the results cover our situation, we give a proof of the existence of a solution based on an adaptation of the method of \cite{Doumic2007a} in Section \ref{sec.gfd.eigenpbl} under particular assumptions.

\subsection{Growth-fragmentation-death individual-based model}
\label{subsec.IBM}

We now describe the mass-structured individual-based model associated to the mechanisms described in Section \ref{subsec.mecha}.

We represent the population at time $t$ by the counting measure
\begin{align}
\label{da.pop.modele.reduit}
	\eta_t(\dif x) \eqdef \sum_{i=1}^{N_t}\delta_{X_t^i}(\dif x)\,,
\end{align}
where $N_t=\crochet{\eta_t,1}$ is the number of individuals in the population at time $t$ and $(X_t^i,\, i=1,\dots, N_t)$ are the masses of the $N_t$ individuals (arbitrarily ordered).

We consider two independent Poisson random measures $\MP_1(\dif u, \dif j, \dif \alpha, \dif \theta)$ and $\MP_2(\dif u, \dif j)$ defined on $\RR_+ \times \NN^* \times [0,1] \times [0,1]$ and $\RR_+ \times \NN^*$ respectively, corresponding to the division and death mechanisms respectively, with respective intensity measures
\begin{align*}
n_1(\dif u, \dif j, \dif \alpha, \dif \theta)
	& =
		\bar \taudiv \, \dif u \, \Big(\sum_{\ell \geq 1} \delta_{\ell}(\dif j) \Big)
		\, q(\dif \alpha)\, \dif \alpha \, \dif \theta\,,
\\
n_2(\dif u, \dif j)
	& =
		D \, \dif u \, \Big(\sum_{\ell \geq 1} \delta_{\ell}(\dif j) \Big)\,.
\end{align*}

Suppose $\MP_1$, $\MP_{2}$ and $\eta_{0}$ mutually independent, let $(\F_t)_{t \geq 0}$ be the canonical filtration generated by $\eta_0$, $\MP_1$ and $\MP_2$.
The process $(\eta_t)_{t\geq 0}$ is then defined by
\begin{align}
\nonumber
  &\eta_{t}
  =
  \sum_{j=1}^{N_0}\delta_{A_t(X_0^j)}
  + \iiiint\limits_{[0,t]\times\NN^*\times[0,1]^2}
        1_{\{j\leq N_{\umoins}\}} \, 
        1_{\{\theta \leq \taudiv(X_\umoins^j)/\bar \taudiv\}}\,
        \bigl[
                -\delta_{A_{t-u}(X^j_\umoins)}
                +\delta_{A_{t-u}(\alpha\,X^j_\umoins)}
\\[-1.2em]
\nonumber
&\qquad\qquad\qquad\qquad\qquad\qquad\qquad\qquad\qquad\qquad\qquad
                +\delta_{A_{t-u}((1-\alpha)\,X^j_\umoins)}
         \bigr]\,
		 		\MP_1(\dif u, \dif j, \dif \alpha, \dif \theta) 
\\[0.2em]
\label{da.def.proc.S.nu}
  &\qquad-
		\iint\limits_{[0,t]\times\NN^*} 
			1_{\{j\leq N_{\umoins}\}} \, 
			 \delta_{A_{t-u}(X^j_\umoins)}
			 			 \,	\MP_2(\dif u, \dif j)\, .
\end{align}
The first term on the right hand corresponds to the state of the population at time $t$ without division and death. At each division or death time, we modify the final state taking into account the discrete event.

\begin{remark}
In this model, individuals do not interact with each other. For any $t>0$, the lineages generated by individuals $x^1_t, \dots, x^{N_t}_t$ are then independent and this process can be seen as a multitype branching process in continuous time and types.
\end{remark}

We introduce the compensated Poisson random measures associated to $\MP_1$ and $\MP_2$ as
\begin{align*}
  \tilde \MP_1(\dif u, \dif j, \dif y, \dif \theta)
  &\eqdef
  \MP_1(\dif u, \dif j, \dif y, \dif \theta)
  -
  n_1(\dif u, \dif j, \dif y, \dif \theta)\,,
\\
 \tilde \MP_2(\dif u, \dif j)
  &\eqdef
  \MP_2(\dif u, \dif j)-n_2(\dif u, \dif j)\,.
\end{align*}

The process $(\eta_t)$ is a Markov process with the following infinitesimal generator (see \cite{campillo2014d})
\begin{align*}
  \L\Phi(\eta)
  & \eqdef 
  \crochet{\eta ,\rhog\, f'} \,
  F'(\crochet{\eta, f}) 
\\
  & \quad 
  +  
  \int_{0}^{\mmax} \taudiv(x) \,
     \int_0^1 			  
	   \Bigl[ 
	      F(\crochet{\textstyle \eta- \delta_{x}
	         +\delta_{\alpha\,x}
		    +
		    \delta_{(1-\alpha)\,x},f}
		  ) 
		 -
		  F(\crochet{\eta,f}) 
	   \Bigr]		\;	
			  q(\alpha)\,\dif \alpha  \, \eta(\dif x) 
\\
	&\quad	+ D\, \int_{0}^{\mmax} 
			\Bigl[
			  F(\crochet{\textstyle \eta- \delta_{x},f})
			  -
			  F(\crochet{\eta,f}) 
			\Bigr]\,	\eta(\dif x) 
\end{align*}
for any test function of the form
\[
   \Phi(\eta)=F(\crochet{\eta,f})
\]
with  $F \in C_b^{1}(\RR)$ et $f \in C^{1}[0,\mmax]$.

\medskip
Note that for $\eta=\sum_{i=1}^n \delta_{x_i}$,
$$
	\L \crochet{\eta,f} = \sum_{i=1} \op^*f(x_i)\,.
$$

From Ito's formula for semi-martingales with jumps (see for exemple \cite{protter2005a}), we obtain the following semimartingale decomposition (see \cite[Proposition 4.1.]{campillo2014d}).
 
\begin{proposition}
\label{da.prop.semimartingale}
Let $F\in C^1(\RR)$ and $f: (t,x) \to f_t(x) \in C^{1,1}(\RR_+ \times [0,\mmax])$. For any $t>0$,
\begin{align*}
  &
  F(\crochet{\eta_t, f_t})
  = 
  F(\crochet{\eta_0, f_0})
   + \int_0^t  \left[
	 		 \crochet{ \eta_u , \, g\,\partial_x f_u+\partial_u f_u} \,
	 		F'\left(\crochet{\eta_u, f_u}\right)
	 	\right] \dif u 
\\  
  &\quad
  + \int_0^t \int_0^\mmax 
  		\taudiv(x)\, \int_0^1
		\bigl[
			F(\crochet{\eta_u
					-\delta_x
					+\delta_{\alpha \, x}
					+\delta_{(1-\alpha)\,x}, f_u} 
			)
        \\[-0.9em] 
        & \qquad\qquad\qquad\qquad\qquad\quad
                     \qquad\qquad\qquad\qquad\qquad
			-
			F(\crochet{\eta_u, f_u}) 
	    \bigr] q(\alpha)\, \dif \alpha \, \eta_u(\dif x)\,\dif u
\\
  &\quad  
  + D \, \int_0^t \int_0^\mmax  
	  \bigl[
		F(\crochet{\eta_u-\delta_x, f_u})
	    -
	    F\left( \crochet{\eta_u, f_u}\right) 
	  \bigr] \, \eta_u(\dif x)\,\dif u
\\[1em]
	& \quad
	  + M_t^{1,F,f}
	   + M_t^{2,F,f}
\end{align*}
with
\begin{align*}
M_t^{1,F,f}  
  &=
   \iiiint\limits_{[0,t]\times\NN^*\times[0,1]^2} 
		1_{\{j\leq N_{\umoins}\}} \,
		1_{\{\theta \leq \taudiv(X_\umoins^j)/
				\bar \taudiv\}}\, 
		\bigl[
			F(\crochet{\eta_{\umoins}
					-\delta_{X_\umoins^j}
					+\delta_{\alpha \, X_\umoins^j}
					+\delta_{(1-\alpha)\, X_\umoins^j}, f_u} 
			)
        \\[-0.9em] 
        \nonumber
        & \qquad\qquad\qquad\qquad\quad
                     \qquad\qquad\qquad\qquad\qquad
			-
			F(\crochet{\eta_{\umoins}, f_u}) 
	    \bigr] 
	    \;
		 \tilde \MP_1(\dif u, \dif j, \dif \alpha, \dif \theta)  
\\[0.7em]
M_t^{2,F,f}
  &= 
  \iint\limits_{[0,t]\times\NN^*} 
	  1_{\{j\leq N_{\umoins}\}} \, 
	  \bigl[
		F(\crochet{\eta_{\umoins}-\delta_{X_\umoins^j}, f_u})
	    -
	    F\left( \crochet{\eta_{\umoins}, f_u}\right) 
	  \bigr] \, \tilde \MP_2(\dif u, \dif j).
\end{align*}
\end{proposition}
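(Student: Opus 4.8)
The plan is to use the pathwise construction \eqref{da.def.proc.S.nu} of $(\eta_t)$ from the Poisson random measures $\MP_1,\MP_2$, together with the change-of-variables formula for càdlàg processes of finite variation. First, since each individual divides at rate at most $\bar\taudiv$ and individuals do not interact, $N_t=\crochet{\eta_t,1}$ is stochastically dominated by a Yule process with birth rate $\bar\taudiv$ issued from $N_0$; hence $\EE[N_t]\leq\EE[N_0]\,e^{\bar\taudiv t}<\infty$, $N_t<\infty$ almost surely, and only finitely many division and death events occur on $[0,t]$ a.s. In particular $u\mapsto\crochet{\eta_u,f_u}=\sum_{i=1}^{N_u}f_u(X_u^i)$ is a finite-variation process: between two consecutive jump times each mass follows the $C^1$ flow $A$, so, using $\frac{\dif}{\dif u}X_u^i=g(X_u^i)$ and the $C^{1,1}$ regularity of $f$,
\[
  \frac{\dif}{\dif u}\crochet{\eta_u,f_u}
  =\sum_{i=1}^{N_u}\bigl[(\partial_u f_u)(X_u^i)+g(X_u^i)\,(\partial_x f_u)(X_u^i)\bigr]
  =\crochet{\eta_u,\,g\,\partial_x f_u+\partial_u f_u}\,,
\]
while at a jump time $\crochet{\eta_u,f_u}$ is modified by one elementary transition: $+f_u(\alpha x)+f_u((1-\alpha)x)-f_u(x)$ at a division of a mass-$x$ individual, or $-f_u(x)$ at its death.

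Next I would apply the change-of-variables formula for $C^1$ functions of a càdlàg finite-variation process (\cite{protter2005a}, or, more elementarily, by cutting $[0,t]$ at the finitely many jump times and using the fundamental theorem of calculus on each piece): for $F\in C^1(\RR)$,
\[
  F(\crochet{\eta_t,f_t})
  =F(\crochet{\eta_0,f_0})
  +\int_0^t \crochet{\eta_u,\,g\,\partial_x f_u+\partial_u f_u}\,F'(\crochet{\eta_u,f_u})\,\dif u
  +\sum_{0<u\leq t}\bigl[F(\crochet{\eta_u,f_u})-F(\crochet{\eta_{\umoins},f_u})\bigr],
\]
the continuous part having been evaluated using the previous display (the jump set being $\dif u$-negligible). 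The jump sum is then rewritten as integrals against $\MP_1$ and $\MP_2$: a division of individual $j$ contributes the integrand $1_{\{j\leq N_{\umoins}\}}\,1_{\{\theta\leq\taudiv(X^j_\umoins)/\bar\taudiv\}}\,[F(\crochet{\eta_{\umoins}-\delta_{X^j_\umoins}+\delta_{\alpha X^j_\umoins}+\delta_{(1-\alpha)X^j_\umoins},f_u})-F(\crochet{\eta_{\umoins},f_u})]$ against $\MP_1(\dif u,\dif j,\dif\alpha,\dif\theta)$, and a death of individual $j$ the integrand $1_{\{j\leq N_{\umoins}\}}\,[F(\crochet{\eta_{\umoins}-\delta_{X^j_\umoins},f_u})-F(\crochet{\eta_{\umoins},f_u})]$ against $\MP_2(\dif u,\dif j)$. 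Writing $\MP_i=\tilde\MP_i+n_i$ and performing the deterministic integrations dictated by $n_1,n_2$ — integrating $\theta$ over $[0,1]$ yields the factor $\taudiv(X^j_u)/\bar\taudiv$, which cancels the $\bar\taudiv$ in $n_1$, and the sum over $j\leq N_u$ turns $\sum_j\taudiv(X^j_u)[\,\cdots]$ and $\sum_j D\,[\,\cdots]$ into $\int_0^\mmax\taudiv(x)[\,\cdots]\,\eta_u(\dif x)$ and $D\int_0^\mmax[\,\cdots]\,\eta_u(\dif x)$ — reproduces exactly the two compensator (drift) terms of the statement, and leaves the compensated integrals $M^{1,F,f}_t$ and $M^{2,F,f}_t$.

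The step I expect to require the most care is the integrability needed to make the last manipulations rigorous: that the integrands above are a.s. $\MP_1$- and $\MP_2$-integrable (so the splitting $\MP_i=\tilde\MP_i+n_i$ is legitimate) and that $M^{1,F,f}$ and $M^{2,F,f}$ are genuine martingales. This is where the bound $\EE[N_t]<\infty$ enters: on the event $\{N_{\umoins}<n\}$ the arguments of $F$ lie in a bounded set and $F$, $F'$, $f$ are bounded there (resp. on $[0,\mmax]$), and since the total jump intensity is at most $(\bar\taudiv+D)N_{\umoins}$, the predictable compensator of the absolute value of each integrand has finite expectation after stopping at $\tau_n=\inf\{t:\,N_t\geq n\}$; as $\tau_n\uparrow\infty$ a.s. (again by $\EE[N_t]<\infty$) and the processes are dominated in $L^1$ uniformly on $[0,t]$, one passes to the limit to obtain the martingale property. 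This is precisely \cite[Proposition 4.1]{campillo2014d}, to which we refer for the remaining routine verifications.
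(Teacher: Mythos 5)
Your proposal is correct and takes essentially the same approach the paper does: the paper gives no written proof, simply invoking It\^o's formula for semimartingales with jumps \citep{protter2005a} and referring to \cite[Proposition 4.1]{campillo2014d}. Your version spells out the pathwise change-of-variables argument for c\`adl\`ag finite-variation processes (legitimate here because a.s.\ finitely many jumps occur on $[0,t]$, thanks to the domination by a Yule process), and the compensator computation $\MP_i=\tilde\MP_i+n_i$ together with the integrability needed to make that split well-defined, but the underlying method is the one the cited references already use.
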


\medskip

From \cite[page 62]{ikeda1981a}, the following result holds.
\begin{proposition}
\label{da.prop.martingales}
\begin{enumerate}
\item If for any $t\geq 0$:
\begin{align*}
&\EE \Bigl(
	\int_0^t \int_0^\mmax 
  		\taudiv(x)\, \int_0^1
		\bigl[
			F(\crochet{\eta_u
					-\delta_x
					+\delta_{\alpha \, x}
					+\delta_{(1-\alpha)\,x}, f_u} 
			)
\\
&\qquad\qquad\qquad\qquad\qquad\qquad\qquad
			-
			F(\crochet{\eta_u, f_u}) 
	    \bigr] q(\alpha)\,\dif \alpha\, \eta_u(\dif x)\,\dif u
	 \Bigr)< \infty
\end{align*}
then $(M_t^{1,F,f})_{t\geq 0}$ is a $(\F_t)_{t\geq0}$-martingale.

\item If for any $t\geq 0$:
\begin{align*}
\EE \Big(
	\int_0^t \int_0^\mmax  
	  \bigl[
		F(\crochet{\eta_u-\delta_x, f_u})
	    -
	    F\left( \crochet{\eta_u, f_u}\right) 
	  \bigr] \, \eta_u(\dif x)\,\dif u
	 \Big)< \infty
\end{align*}
then $(M_t^{2,F,f})_{t\geq 0}$ is a $(\F_t)_{t\geq0}$-martingale.
\end{enumerate}
\end{proposition}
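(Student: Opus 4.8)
The plan is to recognize $M^{1,F,f}$ and $M^{2,F,f}$ as stochastic integrals of \emph{predictable} integrands against the compensated Poisson random measures $\tilde\MP_1$ and $\tilde\MP_2$, and to check that the two hypotheses are precisely the $L^1$-integrability conditions which, by the standard theory of integration against compensated Poisson random measures (\cite[p.~62]{ikeda1981a}), promote such an integral to a genuine $(\F_t)_{t\geq0}$-martingale. So three things have to be done: write the integrands explicitly, verify predictability, and rewrite the integrability hypotheses in the form $\EE\int_0^t\!\int|H^i_u|\,n_i<\infty$.

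From Proposition~\ref{da.prop.semimartingale} one has $M^{1,F,f}_t=\iiiint_{[0,t]\times\NN^*\times[0,1]^2}H^1_u(j,\alpha,\theta)\,\tilde\MP_1(\dif u,\dif j,\dif\alpha,\dif\theta)$ with
$$H^1_u(j,\alpha,\theta)=1_{\{j\leq N_{\umoins}\}}\,1_{\{\theta\leq\taudiv(X_{\umoins}^j)/\bar\taudiv\}}\,\bigl[F(\crochet{\eta_{\umoins}-\delta_{X_{\umoins}^j}+\delta_{\alpha\,X_{\umoins}^j}+\delta_{(1-\alpha)\,X_{\umoins}^j},f_u})-F(\crochet{\eta_{\umoins},f_u})\bigr],$$
and likewise $M^{2,F,f}_t=\iint_{[0,t]\times\NN^*}H^2_u(j)\,\tilde\MP_2(\dif u,\dif j)$ with $H^2_u(j)=1_{\{j\leq N_{\umoins}\}}\bigl[F(\crochet{\eta_{\umoins}-\delta_{X_{\umoins}^j},f_u})-F(\crochet{\eta_{\umoins},f_u})\bigr]$. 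Here $H^i_u$ depends on $\omega$ only through $N_{\umoins}$, $(X_{\umoins}^j)_j$ and $\eta_{\umoins}$; since between consecutive jumps the masses evolve through the continuous flow $A$ and there are only countably many jump times, the map $u\mapsto(N_{\umoins},\eta_{\umoins})$ is left-continuous and $(\F_t)$-adapted, the dependence on the marks $(j,\alpha,\theta)$ being Borel. Hence $H^1$ and $H^2$ are predictable.

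The next step is the bookkeeping that identifies the hypotheses with the required $L^1$-bounds. For $i=1$, inserting $n_1(\dif u,\dif j,\dif\alpha,\dif\theta)=\bar\taudiv\,\dif u\,\bigl(\sum_{\ell\geq1}\delta_\ell(\dif j)\bigr)\,q(\alpha)\,\dif\alpha\,\dif\theta$ into $\EE\int_0^t\!\int|H^1_u|\,n_1$, integrating the $\theta$-indicator against $\bar\taudiv\,\dif\theta$ produces the factor $\taudiv(X_{\umoins}^j)$, and summing over $j\geq1$ against $1_{\{j\leq N_{\umoins}\}}$ turns $\sum_j$ into $\int_0^\mmax(\cdots)\,\eta_{\umoins}(\dif x)$; since $\eta_{\umoins}=\eta_u$ for Lebesgue-almost every $u$, this is exactly the quantity appearing in the first hypothesis, up to inserting an absolute value inside, which is immaterial for the applications made in this paper. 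The computation for $i=2$ against $n_2(\dif u,\dif j)=D\,\dif u\,\bigl(\sum_{\ell\geq1}\delta_\ell(\dif j)\bigr)$ is identical and reproduces the second hypothesis. With predictability and $\EE\int_0^t\!\int|H^i_u|\,n_i<\infty$ in hand, \cite[p.~62]{ikeda1981a} gives the martingale property.

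If one prefers to avoid citing that theorem, I would instead localize at the successive event times $(T_n)_n$ of the population, which satisfy $T_n\uparrow\infty$ a.s.\ because $N$ is dominated by a pure-birth process of bounded per-capita rate and hence does not explode in finite time (cf.\ Proposition~\ref{majoration.esp.Nt}). On $[0,T_n]$ each $M^{i,F,f}$ is a finite sum of compensated jumps and is readily checked to be a martingale; moreover $|M^{i,F,f}_{T_n\wedge t}|\leq\int_0^t\!\int|H^i_u|\,(\MP_i+n_i)$, whose expectation equals $2\,\EE\int_0^t\!\int|H^i_u|\,n_i<\infty$ by the compensation formula for nonnegative predictable integrands, so $(M^{i,F,f}_{T_n\wedge t})_n$ is uniformly integrable and a Vitali passage to the limit upgrades the local martingale to a true martingale. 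The only genuinely non-mechanical part is the rewriting of the hypotheses in the third paragraph — the $\theta$-thinning and the $1_{\{j\leq N_{\umoins}\}}$ cutoff — together with this mild non-explosion input; everything else is routine Poisson-integral theory.
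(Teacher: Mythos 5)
Your proposal is correct and takes essentially the same route as the paper: the paper simply cites \cite[p.~62]{ikeda1981a} without further detail, and you supply precisely the verification (predictability, rewriting the hypothesis as the $L^1$-condition $\EE\int_0^t\!\int|H^i_u|\,n_i<\infty$ via the $\theta$-thinning and the $j\leq N_{\umoins}$ cutoff) that this citation leaves implicit. Your observation that the stated hypotheses should be read with absolute values inside the brackets to match the standard integrability criterion is a fair clarification, and the alternative localization argument is a pleasant self-contained backup, but both are elaborations of, not departures from, the paper's argument.
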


\medskip

We set $\PP_{\eta_0}$ the probability under the initial condition $\eta_0$.

\medskip

By the same argument used by \cite[Corollary 4.3.]{campillo2014d}, we can prove the following result.
\begin{lemma}[Control of the population size]
\label{da.lem.taille.pop}
For any $t\geq 0$ and $\ell\geq 1$:
\begin{align*}
  \EE_{\delta_x}\left( 
    \sup_{u \in [0,t]} \crochet{ \eta_u,1 }^\ell 
  \right) 
  \leq C_{\ell,t}
\end{align*}
where $C_{\ell,t}<\infty $ only depends on $\ell$ and $t$.
\end{lemma}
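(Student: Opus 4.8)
The plan is to exploit the fact that the division rate is bounded and that deaths can only decrease the population. Since individuals do not interact, $N_u=\crochet{\eta_u,1}$ increases by one at each division, decreases by one at each death, and each individual of mass $x$ divides at rate $\taudiv(x)\le\bar\taudiv$; thus, starting from $N_0=1$, a routine coupling of the jump times of $\MP_1$ and $\MP_2$ shows that $(N_u)_{u\ge0}$ is dominated, pathwise, by a Yule process $(Y_u)_{u\ge0}$ started from a single individual in which every individual splits into two at the constant rate $\bar\taudiv$ and no one dies. Since $(Y_u)$ is nondecreasing, $\sup_{u\in[0,t]}N_u\le\sup_{u\in[0,t]}Y_u=Y_t$, and $Y_t$ has a geometric law of parameter $e^{-\bar\taudiv t}$, so $\EE_{\delta_x}\bigl(\sup_{u\in[0,t]}\crochet{\eta_u,1}^\ell\bigr)\le\EE(Y_t^\ell)=:C_{\ell,t}<\infty$ for every $\ell\ge1$, which is the claim.

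The same bound can also be reached by the method of \citet{campillo2014d}, i.e.\ directly from the semimartingale decomposition of Proposition \ref{da.prop.semimartingale} with $f_u\equiv1$ and $F(x)=x^\ell$. Localising with $\tau_n=\inf\{u\ge0:\ N_u\ge n\}$ makes the jump terms $M^{1,F,f}_{\cdot\wedge\tau_n}$, $M^{2,F,f}_{\cdot\wedge\tau_n}$ genuine martingales (Proposition \ref{da.prop.martingales}); the death contribution to the drift is nonpositive, while the division contribution is controlled by $\taudiv\le\bar\taudiv$ and the elementary inequality $(k+1)^\ell-k^\ell\le\ell\,2^{\ell-1}k^{\ell-1}$ for $k\ge1$, giving
\begin{align*}
  \EE_{\delta_x}\bigl(N_{t\wedge\tau_n}^\ell\bigr)
  \le 1+\bar\taudiv\,\ell\,2^{\ell-1}\int_0^t\EE_{\delta_x}\bigl(N_{u\wedge\tau_n}^\ell\bigr)\,\dif u .
\end{align*}
Gr\"onwall's lemma and Fatou's lemma ($n\to\infty$) then yield $\EE_{\delta_x}(N_t^\ell)\le e^{\bar\taudiv\,\ell\,2^{\ell-1}t}<\infty$ for all $\ell$ and $t$. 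To reach the supremum one returns to the decomposition, whose finite-variation part is a nonpositive term plus a nondecreasing term bounded at time $t$ by $\bar\taudiv\,\ell\,2^{\ell-1}\int_0^tN_u^\ell\,\dif u$; taking first the supremum over $[0,t]$ and then expectations, the drift term is handled by the moment bound just proved and the two martingale suprema by Doob's $L^2$ inequality together with Cauchy--Schwarz, $\EE_{\delta_x}\bigl(\sup_{u\le t}|M^{i,F,f}_u|\bigr)\le2\,\EE_{\delta_x}\bigl(\crochet{M^{i,F,f}}_t\bigr)^{1/2}$, the predictable brackets being dominated by $C_\ell\int_0^t\EE_{\delta_x}(N_u^{2\ell-1})\,\dif u$, finite by the moment bound applied with exponent $2\ell$.

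The only genuine difficulty is the mild circularity inherent in this second route: checking the integrability hypotheses of Proposition \ref{da.prop.martingales} — that the jump terms are true, and not merely local, martingales — presupposes moment control on $N_u$, which is precisely what is being proved. This is why one first establishes $\EE_{\delta_x}(N_t^\ell)<\infty$ for every $\ell$ by localisation and Gr\"onwall, and only afterwards treats the supremum, bounding the brackets of the martingale parts in terms of moments of one higher order. The Yule coupling of the first paragraph avoids this issue altogether and is the shortest argument.
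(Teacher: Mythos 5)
Your proposal is correct, and your first route (Yule coupling) is genuinely different from — and more elementary than — the argument the paper points to. The paper simply invokes \citet[Corollary 4.3]{campillo2014d}, whose method is precisely your second route: take $F(x)=x^\ell$ and $f\equiv1$ in the semimartingale decomposition of Proposition~\ref{da.prop.semimartingale}, localise with $\tau_n$, use $(k+1)^\ell-k^\ell\le\ell\,2^{\ell-1}k^{\ell-1}$ and Gr\"onwall to get the moment bound, then deal with the supremum via Doob and the predictable brackets. You reconstructed that route accurately, and you correctly identified and resolved the only delicate point, namely that Proposition~\ref{da.prop.martingales}'s integrability hypothesis is not available \emph{a priori} and must be replaced by a stopping-time localisation, after which Fatou gives the unstopped moment bound.

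The Yule coupling, by contrast, shortcuts the whole computation. Because each division adds exactly one individual at total rate $\sum_i\taudiv(X^i_{u^-})\le\bar\taudiv N_{u^-}$, each death removes one, and the $\MP_1$-events accepted by $\eta$ (those with $j\le N_{u^-}$ and $\theta\le\taudiv(X^j_{u^-})/\bar\taudiv$) form a subset of those a pure-birth process with per-capita rate $\bar\taudiv$ would accept, the domination $N_u\le Y_u$ propagates by induction on jump times, and since $Y$ is nondecreasing, $\sup_{u\le t}N_u\le Y_t$, whose law is geometric with parameter $e^{-\bar\taudiv t}$ and therefore has all moments. This gives $C_{\ell,t}=\EE(Y_t^\ell)$ directly, with no localisation and no Gr\"onwall. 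What the semimartingale route buys in exchange for its extra length is robustness: it does not rely on the jump in $N$ being exactly $\pm1$, nor on $\taudiv$ being bounded by a constant independent of the state, and it adapts unchanged to interacting or measure-valued generalisations where a clean pathwise coupling with a pure-birth process is no longer available. For the model of this paper the Yule bound is the cleanest argument; for the general framework of Proposition~\ref{da.prop.semimartingale} the paper's route is the one that scales.
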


\section{Extinction probability of the growth-fragmentation-death individual based model}
\label{sec.extinction.proba}

We are interested in the survival probability of the population described in Section \ref{subsec.IBM}.

We suppose that, at time $t=0$, there is only one individual, with mass $x_0$, in the population, i.e.
$$
	\eta_0(\dif x) = \delta_{x_0}(\dif x)\,.
$$
 
Note that in age-structured models, all individuals have the same age 0 at birth. 
It allows to easily characterize the survival probability using standard results on Galton-Watson processes \citep{TranChi2009a}.
In our case, masses of individuals from a division can take any value in $(0,\mmax)$. The extinction probability of the population then depends on the mass of the initial individual and the survival condition is more complex to obtain.

\bigskip

The extinction probability of the population with initial mass $x_0$ is
\begin{align*}
  p(x_0) \eqdef \PP_{\delta_{x_0}}(\exists t>0, N_t=0)\,.
\end{align*}
The survival event will be denoted $\{\text{survival}\}=\{\exists t>0, N_t=0\}^c$ and the survival probability is $\PP_{\delta_{x_0}}(\text{survival})=1- p(x_0)$.

We define the $n$-th generation as the set of individuals descended from a division of one individual of the $(n-1)$-th generation.
The generation 0 corresponds to the initial population.

We introduce the following notation,
\begin{align*}
  p_n(x_0) 
  \eqdef
  \PP_{\delta_{x_0}}
    (\text{extinction of the population before the $n$-th generation})\,,
    \quad n\in \NN\,.
\end{align*}
It is obvious that
$$
	\lim_{n \to \infty} p_n(x_0)=p(x_0)\,.
$$

\medskip

Let $\tau$ be the stopping time of the first event (division or death), then at time $\tau$ the population is given by
\begin{align}
\label{eta_tau}
  \eta_\tau
  \eqdef
  \left\{
			\begin{array}{ll}
				0\, & \text{if death}\,,\\
				\delta_{X_1} + \delta_{X_2}\, & \text{if division}\,,
			\end{array}
  \right.
\end{align}
with $X_1=\alpha \, A_\tau(x_0)$ and $X_2=(1-\alpha)\,A_\tau(x_0)$ where the proportion $\alpha$ is distributed according to the kernel $q(\alpha)$.

\bigskip

\begin{proposition}
\label{da.prop.proba.extinction}
$p$ is the minimal non-negative solution of
\begin{align}
\nonumber
  p(x_0)
  &=
  \int_0^\infty D\, e^{-D\,t}\, e^{-\int_0^t \taudiv(A_u(x_0))\,\dif u}\, 
  \dif t
\\
\label{da.eq.proba.extinction}
  &\quad
  + \int_0^\infty \taudiv(A_t(x_0)) \, e^{-\int_0^t \taudiv(A_u(x_0)) 
      \,\dif u -D\,t}\,
	\int_0^1 q(\alpha)\,p\big(\alpha \, A_t(x_0)\big)\,
	     p\big((1-\alpha) \, A_t(x_0)\big)\, \dif \alpha\, \dif t \,,
\end{align}
in the sense that for any non-negative solution $\tilde p$ we have $\tilde p\geq p$.
\end{proposition}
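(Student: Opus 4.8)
The plan is to run a first-event (first-step) decomposition, exploit the branching structure of $(\eta_t)$ recorded in the Remark following \eqref{da.def.proc.S.nu}, and obtain the minimality statement through the generation-truncated probabilities $p_n$ already introduced. I would begin by analysing the first event time $\tau$. On $[0,\tau)$ the population reduces to the single individual of mass $A_t(x_0)$ at time $t$; from the Poisson construction \eqref{da.def.proc.S.nu}, the accepted division points — obtained by retaining a point of $\MP_1$ with $j=1$ at time $u$ with probability $\taudiv(A_u(x_0))/\bar\taudiv$ — form an inhomogeneous Poisson process of rate $\taudiv(A_u(x_0))$, superposed with the death points of constant rate $D$. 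Hence $\PP_{\delta_{x_0}}(\tau>t)=\exp\bigl(-Dt-\int_0^t\taudiv(A_u(x_0))\,\dif u\bigr)$ (with possibly positive mass at $\tau=+\infty$, in which case the population never dies out); conditionally on $\{\tau=t<\infty\}$ the event at $\tau$ is a death with probability $D/(D+\taudiv(A_t(x_0)))$ and a division with the complementary probability, and in the latter case the proportion $\alpha$ has law $q$, so that $\eta_\tau$ is exactly as in \eqref{eta_tau}.

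Next, since individuals do not interact, the strong Markov property at the stopping time $\tau$ gives that, conditionally on a division at $\tau=t$ with proportion $\alpha$, the shifted process $(\eta_{\tau+s})_{s\ge0}$ is the sum of two independent copies of the process started from $\delta_{\alpha A_t(x_0)}$ and $\delta_{(1-\alpha)A_t(x_0)}$; extinction of the whole population therefore occurs if and only if each of the two sub-populations goes extinct, an event of conditional probability $p(\alpha A_t(x_0))\,p((1-\alpha)A_t(x_0))$. Taking $\PP_{\delta_{x_0}}$-expectation, splitting on the first event according to the law obtained above, and noting that extinction is immediate on $\{\tau<\infty,\,\text{death}\}$ and impossible on $\{\tau=+\infty\}$, yields \eqref{da.eq.proba.extinction}. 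Running the same computation for the truncated events gives $p_0\equiv0$ and, for $n\ge0$,
\begin{align*}
p_{n+1}(x_0)
&= \int_0^\infty D\, e^{-Dt - \int_0^t \taudiv(A_u(x_0))\,\dif u}\,\dif t \\
&\quad + \int_0^\infty \taudiv(A_t(x_0))\, e^{-Dt - \int_0^t \taudiv(A_u(x_0))\,\dif u}
\int_0^1 q(\alpha)\, p_n(\alpha A_t(x_0))\, p_n((1-\alpha) A_t(x_0))\,\dif\alpha\,\dif t,
\end{align*}
because a division of a generation-$k$ individual produces two generation-$(k+1)$ individuals, so that extinction of the whole population before its $(n+1)$-th generation corresponds, after the first division, to extinction before the $n$-th generation of each sub-population (Lemma \ref{da.lem.taille.pop} ensures the process is non-exploding, so generations are well defined). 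Incidentally, \eqref{da.eq.proba.extinction} can also be recovered from this recursion by letting $n\to\infty$, using $p_n\uparrow p$ and monotone convergence.

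Finally, for minimality, let $\tilde p\ge0$ be any solution of \eqref{da.eq.proba.extinction}. I would prove $\tilde p\ge p_n$ for every $n$ by induction: $\tilde p\ge0=p_0$, and if $\tilde p\ge p_n$ pointwise then, all factors being non-negative, $p_n(\alpha A_t(x_0))\,p_n((1-\alpha)A_t(x_0))\le\tilde p(\alpha A_t(x_0))\,\tilde p((1-\alpha)A_t(x_0))$ for all $t,\alpha$; inserting this in the recursion and comparing with \eqref{da.eq.proba.extinction} for $\tilde p$ (whose death term coincides with that of $p_{n+1}$) gives $p_{n+1}(x_0)\le\tilde p(x_0)$. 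Letting $n\to\infty$ yields $p\le\tilde p$, which, together with the fact that $p$ itself solves \eqref{da.eq.proba.extinction}, identifies $p$ as the minimal non-negative solution. The one genuinely delicate point in this scheme is making the first-event decomposition rigorous for the piecewise-deterministic process $(\eta_t)$: one must check carefully from \eqref{da.def.proc.S.nu} that $(\tau,\eta_\tau)$ has the claimed joint law — a thinning-and-superposition argument for the accepted division points and the death points — and, above all, that the strong Markov property and the independent restart of the two sub-lineages hold at $\tau$, which is the precise content of the Remark after \eqref{da.def.proc.S.nu} and can be obtained from the Poisson-measure construction of \cite{campillo2014d}. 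Everything else is bookkeeping and monotone convergence.
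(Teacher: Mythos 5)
Your proposal matches the paper's own proof: first-event decomposition at the stopping time $\tau$, with the joint law of $(\tau,\eta_\tau)$ obtained by a thinning/superposition argument (which is exactly the content of the paper's Lemma \ref{lemme.proba.evnt}), the strong Markov property and independence of the two sub-lineages at the division time, and minimality via the increasing sequence of generation-truncated extinction probabilities $p_n$ together with induction on the recursion they satisfy. The only presentational difference is that you re-derive the law of $(\tau,\eta_\tau)$ inline rather than citing Lemma \ref{lemme.proba.evnt}, and you write the $p_n$ recursion more explicitly, but the argument is the same.
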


This result is similar to the expression of the extinction probability for a non-homogeneous linear birth and death process (see for example \citet{bailey1963a}).

\begin{remark}
The function $f \equiv 1$ is solution to \eqref{da.eq.proba.extinction}. Hence, $0 \leq p(x) \leq 1$ for any $x\in[0,\mmax]$.
\end{remark}

\begin{lemma}
\label{lemme.proba.evnt}
\begin{enumerate}
\item The probability for one individual with mass $x_0$ to die before its division and before a given time $t_0$ is
$$
	\PP_{\delta_{x_0}}(\eta_\tau=0; \tau\leq t_0)
		=
	 \int_0^{t_0} D\, e^{-D\,t}\, e^{-\int_0^t \taudiv(A_u(x_0)) \dif u}\, \dif t \,.
$$

\item For any bounded measurable function $f:[0,\mmax]^2 \to \RR$:
\begin{align*}
	\EE_{\delta_{x_0}}\left[
						1_{\{ \eta_\tau \neq 0\}} \,1_{\{ \tau \leq t_0\}} \,
						f(X_1, X_2) 
					  \right]
&=
			\int_0^{t_0} 
				\taudiv(A_t(x_0))\, 
				e^{-D\,t}\, e^{-\int_0^t \taudiv(A_u(x_0)) \dif u}\,
\\
	& \qquad \qquad
				\int_0^1
					q(\alpha)\,
					f\big(\alpha \,A_t(x_0) , \, (1-\alpha) \,A_t(x_0)\Big)\, \dif \alpha				
				\, \dif t \, .
\end{align*}
\end{enumerate}
\end{lemma}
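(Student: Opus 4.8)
The plan is to reduce both identities to an elementary computation with two independent competing clocks read off directly from the Poisson random measures $\MP_2$ (death) and $\MP_1$ (division). The key preliminary observation is that, by the construction \eqref{da.def.proc.S.nu}, on the interval $[0,\tau)$ the population consists of the single individual present at time $0$, whose mass therefore equals the \emph{deterministic} curve $u\mapsto A_u(x_0)$; in particular, for $u<\tau$ and $j=1$ the acceptance condition $\{\theta\leq\taudiv(X^j_\umoins)/\bar\taudiv\}$ appearing in \eqref{da.def.proc.S.nu} becomes $\{\theta\leq\taudiv(A_u(x_0))/\bar\taudiv\}$, i.e.\ a thinning by a deterministic function of time. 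Since $0$ and $\mmax$ are fixed points of $g$, the flow $A_\cdot(x_0)$ stays in $[0,\mmax]$ for all times and, $\taudiv$ being bounded by Assumption~\ref{da.hypo.model.reduit}, every integral $\int_0^t\taudiv(A_u(x_0))\,\dif u$ appearing below is finite.

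Next I would let $T_D$ be the first atom of $\MP_2$ with $j=1$ and $T_B$ the first atom $(u,1,\alpha,\theta)$ of $\MP_1$ with $j=1$ and $\theta\leq\taudiv(A_u(x_0))/\bar\taudiv$. From the intensity $n_2$, $T_D$ is exponential with parameter $D$, so $\PP_{\delta_{x_0}}(T_D>t)=e^{-Dt}$ with density $D\,e^{-Dt}$. By the restriction (thinning) theorem for Poisson random measures — applicable precisely because, by the previous step, the acceptance is governed by a deterministic function of time — the atoms selecting $T_B$ form, in the variables $(u,\alpha)$, a Poisson random measure on $\RR_+\times[0,1]$ with intensity $\taudiv(A_u(x_0))\,\dif u\,q(\alpha)\,\dif\alpha$ (using $\int_0^1 q(\alpha)\,\dif\alpha=1$); hence $\PP_{\delta_{x_0}}(T_B>t)=\exp\!\big(-\int_0^t\taudiv(A_u(x_0))\,\dif u\big)$, and conditionally on $\{T_B=t\}$ the mark $\alpha$ is independent of $T_B$ with law $q(\alpha)\,\dif\alpha$. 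Since $\MP_1$ and $\MP_2$ are independent, $T_D$ and $(T_B,\alpha)$ are independent; moreover $\tau=T_D\wedge T_B$, the two clocks ring at distinct times almost surely, $\{\eta_\tau=0\}=\{T_D<T_B\}$, $\{\eta_\tau\neq0\}=\{T_B<T_D\}$, and on this last event $X_1=\alpha\,A_\tau(x_0)$, $X_2=(1-\alpha)\,A_\tau(x_0)$ as in \eqref{eta_tau}.

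It then remains to compute. For the first item, $\PP_{\delta_{x_0}}(\eta_\tau=0;\,\tau\leq t_0)=\PP_{\delta_{x_0}}(T_D\leq t_0,\,T_D<T_B)$, which by independence equals $\int_0^{t_0}D\,e^{-Dt}\,\PP_{\delta_{x_0}}(T_B>t)\,\dif t$, which is precisely the claimed formula. For the second item, $\EE_{\delta_{x_0}}[1_{\{\eta_\tau\neq0\}}\,1_{\{\tau\leq t_0\}}\,f(X_1,X_2)]=\EE_{\delta_{x_0}}[1_{\{T_B\leq t_0\}}\,1_{\{T_B<T_D\}}\,f(\alpha\,A_{T_B}(x_0),(1-\alpha)\,A_{T_B}(x_0))]$; conditioning on $(T_B,\alpha)$, using that $T_B$ has density $t\mapsto\taudiv(A_t(x_0))\exp(-\int_0^t\taudiv(A_u(x_0))\,\dif u)$ and that $\PP_{\delta_{x_0}}(T_D>t)=e^{-Dt}$ independently, and finally integrating $\alpha$ against $q$, yields the stated identity. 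The only genuinely delicate point is the use of the thinning theorem in the presence of a state-dependent acceptance threshold, which is exactly what the deterministic description of the process on $[0,\tau)$ resolves; the rest is routine. Alternatively, one may invoke directly the classical description of $(\eta_t)$ restricted to $[0,\tau)$ as a piecewise-deterministic Markov process with flow $A$ and total jump rate $D+\taudiv$, for which the joint law of the first jump time and of the post-jump configuration is standard.
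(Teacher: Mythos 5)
Your proposal is correct and follows essentially the same route as the paper: identify $\tau$ as the minimum of the death time (first atom of $\MP_2$ with $j=1$, exponential with rate $D$) and the thinned division time (first accepted atom of $\MP_1$ with $j=1$, which by the deterministic flow on $[0,\tau)$ has the inhomogeneous exponential law with rate $t\mapsto\taudiv(A_t(x_0))$), use independence of $\MP_1$ and $\MP_2$, and integrate. The paper states this more tersely but uses precisely this competing-clocks/thinning argument, so there is no meaningful difference.
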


\begin{proof}
By construction of the process in \eqref{da.def.proc.S.nu}, we get
\[
  \tau = T_{1}\wedge T_{2}\,,
\]
where, $T_{1}$ is the time of the first jump of the process 
\[
  t\mapsto \MP_{1}(\Gamma_t(x_0))
\]
with 
$\Gamma_t(x_0)=
	\left\{
		(s,1,y,\theta),\, s\in[0,t],\,y\in [0,1], 
		\theta\in\left[0,\frac{\taudiv(A_s(x_0))}{\bar \taudiv}\right] 
	\right\}
$
and $T_{2}$ is the time of the first jump of the process 
\[
  t\mapsto \MP_{2}([0,t]\times \{1\})\,.
\]

The distribution of $T_{1}$ is a non-homogeneous exponential distribution with parameter $\taudiv(A_t(x_0))$, i.e. with the probability density function $\taudiv(A_t(x_0))\,\exp({-\int_0^t \taudiv(A_u(x_0)) \dif u})$. The distribution of $T_{2}$ is a (homogeneous) exponential distribution with parameter $D$. $T_{1}$ and $T_{2}$ are independent. We then deduce the results of the lemma.
\end{proof}

\begin{proof}[Proof of Proposition \ref{da.prop.proba.extinction}]
The extinction probability of the population is
\begin{align*}
  p(x_0)
  &= 
  \PP_{\delta_{x_0}}(\eta_{\tau}=0) 
  +
  \PP_{\delta_{x_0}}(\exists t>\tau, N_t=0\textrm{ and }\eta_{\tau}\neq 0) 
\end{align*}

From the Markov property at time $\tau$, we get
\begin{align*}
&\PP_{\delta_{x_0}}(\exists t>\tau, N_t=0\textrm{ and }\eta_{\tau}\neq 0) 
=
  \EE_{\delta_{x_0}}\bigl(
     \EE_{\delta_{X_1}+\delta_{X_2}}(
      \indic_{\{\exists t>0, N_t=0\}})\,\indic_{\{\eta_{\tau}\neq 0\}}
  \bigr) 
\end{align*}
where the random variables $X_1$ et $X_2$ are defined by \eqref{eta_tau}. Since the two lineages are independent, then
\begin{align*}
&\PP_{\delta_{x_0}}(\exists t>\tau, N_t=0\textrm{ and }\eta_{\tau}\neq 0) 
  = 
  \EE_{\delta_{x_0}}\bigl(
     p(X_{1})\,p(X_{2})\,\indic_{\{\eta_{\tau}\neq 0\}}
  \bigr) \,.
\end{align*}
Hence \eqref{da.eq.proba.extinction} follows from Lemma \ref{lemme.proba.evnt}.

\medskip

We now prove that $p$ is the minimal solution of \eqref{da.eq.proba.extinction}.
Let $\tilde p$ be a non-negative solution of \eqref{da.eq.proba.extinction}. Then for any $x$, $0=p_0(x)\leq \tilde p(x)$. Following the previous approach, we can show that
\begin{multline*}
  p_n(x_0)
  =
  \int_0^\infty D\, e^{-D\,t}\, e^{-\int_0^t \taudiv(A_u(x_0))\,\dif u}\, 
  \dif t
  + \int_0^\infty \taudiv(A_t(x_0)) \, e^{-\int_0^t \taudiv(A_u(x_0)) 
      \,\dif u -D\,t}\,
\\
	\int_0^1 q(\alpha)\,p_{n-1}\big(\alpha \, A_t(x_0)\big)\,
	     p_{n-1}\big((1-\alpha) \, A_t(x_0)\big)\, \dif \alpha\, \dif t\,.
\end{multline*}
By recurrence, we then get $p_n(x_0) \leq \tilde p(x_0)$. Passing to the limit, we conclude that $p(x_0)=\lim_{n \to \infty}p_n(x_0)\leq \tilde p(x_0)$.
\end{proof}

\bigskip

For any $x \in (0,M)$ and $y>0$ such that $x\leq y$, let $t(x,y)$ be the first hitting time of $y$ by the flow $A_t(x)$, i.e. 
\begin{align}
\label{temps.atteinte.masse}
  t(x,y)
  \eqdef
  \inf\{t \geq 0 ,\, A_t(x)=y \}
  =
  \left\{
			\begin{array}{ll}
				\tilde A_x^{-1}(y) \,, & \text{if $x\leq y<M$}\,,\\
				+\infty\,, & \text{if $y\geq M$}\,,
			\end{array}
  \right.
\end{align}
where $\tilde A_x^{-1}$ is the inverse function of the $C^1$-diffeomorphism $t\mapsto A_t(x)$.

\bigskip

\begin{theorem}
\label{da.th.proba_inf1}
The following equivalence holds:
$$
	\exists x_0 \in (0,M), \, p(x_0)<1 \Longleftrightarrow \forall x \in (0,M), \, p(x)<1.
$$
\end{theorem}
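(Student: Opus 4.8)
The plan is to prove the non-trivial implication ($\exists x_0,\ p(x_0)<1 \Rightarrow \forall x,\ p(x)<1$), since the converse is immediate. The key structural fact is that a single individual of mass $x\in(0,M)$ will, with positive probability, follow the deterministic flow $A_t(x)$ for an arbitrarily long time without dividing or dying, hence pass through any mass level $y$ with $x\le y<M$. So I would first establish the following ``comparison along the flow'' statement: for $x\le y<M$, one has $p(x)\le p(y)$, or more precisely a quantitative bound of the form $1-p(x)\ge c\,(1-p(y))$ for some $c>0$. Concretely, consider the event that the initial individual neither dies nor divides on $[0,t(x,y)]$ — by Lemma \ref{lemme.proba.evnt} (or directly from the construction), this has probability $e^{-D\,t(x,y)}\exp\!\big(-\!\int_0^{t(x,y)}\taudiv(A_u(x))\,\dif u\big)>0$, using that $\taudiv$ is bounded by $\bar\taudiv$ and $t(x,y)<\infty$. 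On this event the population at time $t(x,y)$ is exactly $\delta_y$, so by the (strong) Markov property $1-p(x)\ge e^{-D\,t(x,y)}\exp\!\big(-\!\int_0^{t(x,y)}\taudiv\big)\,(1-p(y))$. In particular $p(x)<1$ whenever $p(y)<1$.

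Next I would use this to propagate positivity of the survival probability \emph{downward} and then \emph{upward} in mass. Downward: for any $x\in(0,M)$, pick any $y\in[x\vee x_0\,,\,M)$ large enough that $x\le y$ and $x_0\le y$; then $p(x_0)<1$ gives, by the above comparison applied to the pair $(x_0,y)$ read in reverse... — here I need a different move, because the comparison only lets me go from larger mass to smaller survival, i.e. $p(\text{small})\le p(\text{large})$. So the clean statement is: \emph{if $p(y)<1$ for some $y$, then $p(x)<1$ for all $x\le y$.} To reach masses $x>x_0$, I use the division mechanism: starting from $\delta_{x_0}$, with positive probability the first event is a division occurring at a flow-time $t$ with $A_t(x_0)$ close to $M$ (possible since $\taudiv>0$ on $(\mdiv,M)$ and the flow, being positive on $(0,M)$, comes arbitrarily close to $M$ though it may not reach it in finite time — I must be careful here: if $A_t(x_0)\to M$ only as $t\to\infty$, I instead take the event of no event up to a large time $t_1$, then a division shortly after), and the offspring then have masses $\alpha A_t(x_0)$ and $(1-\alpha)A_t(x_0)$; by symmetry of $q$ and $q>0$ near $\alpha=1/2$... this still only produces masses $<M$. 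The correct and simplest route: combine the two directions. Since survival of the whole population occurs as soon as \emph{some} sub-lineage survives, and a division from mass $z$ produces an offspring of mass $\alpha z$ for $q$-a.e. $\alpha$, I get: if $p$ is not identically $1$, then there is a point where it is $<1$, and by the flow-comparison $p<1$ on an interval $(0,y_*)$; then starting from any $x\in(0,M)$ I wait (positive probability) until the individual has grown past some $\mdiv<z<M$, let it divide, and obtain an offspring of mass $\alpha z$ which for $\alpha$ small lies in $(0,y_*)$ — a set of positive $q$-measure — whose lineage survives with positive probability, hence $p(x)<1$. This closes the argument.

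I expect the main obstacle to be the careful handling of the endpoints and of the case where the flow $A_t(x)$ approaches $M$ only asymptotically: one must choose the waiting time $t_1$ large but finite so that $A_{t_1}(x)>\mdiv$ (possible by Assumption \ref{da.hypo.model.reduit}-\ref{g.0.M}, since $g>0$ on $(0,M)$ forces $A_t(x)\to M$), then use that the division rate is strictly positive there to get a division at a finite random time with positive probability, and finally that the proportion kernel $q$ charges a neighbourhood of $1/2$ (hence, since it is only required nonnegative, one must argue via symmetry that it charges \emph{both} a neighbourhood of $1/2$ and, via the product $\alpha z$, some interval $(0,\varepsilon)$ — actually $q$ is a density on $[0,1]$ with total mass $1$ and symmetric, so it charges some interval around $1/2$, giving offspring masses in a neighbourhood of $z/2$; iterating the flow-comparison and, if needed, repeated halvings, one reaches $(0,y_*)$). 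Packaging this into a clean ``from any $x$ we reach, with positive probability, a lineage started in $(0,y_*)$'' lemma, using the strong Markov property at the successive event times and Proposition \ref{da.prop.proba.extinction}, is the technical heart; the rest is bookkeeping with the explicit formulas of Lemma \ref{lemme.proba.evnt}.
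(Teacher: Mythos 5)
Your flow-comparison step (if $p(y)<1$ and $x\le y<M$, then $p(x)<1$, with the quantitative bound $1-p(x)\ge e^{-Dt(x,y)-\int_0^{t(x,y)}\taudiv}\,(1-p(y))$) is correct and is essentially Case~1 of the paper. The gap is in your claim, stated as the ``technical heart,'' that from any $x\in(0,M)$ a lineage reaches $(0,y_*)$ with positive probability, where $y_*=x_0$ is obtained only from the flow comparison. This claim is \emph{false} under the paper's assumptions. The kernel $q$ is only assumed symmetric, nonnegative and continuous with $q(0)=q(1)=0$, so its support can be bounded away from $0$, say $\mathrm{supp}(q)\subset[\alpha_{\min},1-\alpha_{\min}]$ with $\alpha_{\min}>0$. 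Since division can only occur at masses $z>\mdiv$, every offspring produced at a division has mass at least $\alpha_{\min}\,\mdiv$. Consequently the entire population has masses bounded below by $\min(x,\alpha_{\min}\mdiv)$ for all time, and if $x_0<\alpha_{\min}\mdiv$ no lineage ever enters $(0,x_0)$. Your ``repeated halvings'' do not help: an offspring of mass $\approx z/2<\mdiv$ must grow back above $\mdiv$ before it can divide again, so the halved mass is pulled back up to $\mdiv$ at each step and the iteration stalls at the same floor.

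The missing ingredient is the paper's Case~2: for $x_0\le x\le \mdiv$ the cell \emph{cannot} divide, so one has the exact two-way identity $\PP_{\delta_{x_0}}(\text{survival})=e^{-D\,t(x_0,x)}\,\PP_{\delta_x}(\text{survival})$, which lets one propagate $p<1$ \emph{upward} from $x_0$ to $\mdiv$ without using divisions at all. This extends the known interval to $(0,x_0\vee\mdiv]$. From there the paper's Case~3 works by \emph{doubling the interval}, not halving the mass: for $z\in(x_1,2x_1)$ with $x_1=x_0\vee\mdiv$, one conditions on a division \emph{before the mass reaches $2x_1$}, so the division mass $A_t(z)<2x_1$ and the smaller offspring $\alpha A_t(z)\le x_1$ whenever $\alpha\le 1/2$, an event of $q$-probability $\ge 1/2$ by symmetry -- no assumption on the support of $q$ near $0$ is needed. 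Iterating this doubling covers all of $(x_1,M)$. Your proposal would be correct if one added Case~2 and replaced ``reach $(0,y_*)$ by repeated halvings'' with the doubling iteration constraining the division mass, and you should note that the constraint ``divide before the mass reaches $2x_1$'' (absent from your sketch) is exactly what guarantees the offspring lands in the interval where $p<1$ is already known.
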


In other words, the survival probability of the population in an environment depends on the mass of the initial mass, however the fact that this environment is in favor of the population growth does not depends on its initial mass.

\begin{proof}
Let $x_0 \in (0,M)$ be such that $p(x_0)<1$.
We consider three cases.
\begin{enumerate}
\item \label{fenumi} $x\in(0,x_0)$. 

The time $t(x,x_0)$ is finite. There is survival in the population with initial mass $x$ if the initial individual reaches mass $x_0$ without division, and then survives.
We then get from the Markov property
\begin{align*}
  \PP_{\delta_{x}}(\text{survival}) 
  \geq 
  \exp\Bigl(
    -D\,t(x,x_0) -\int_0^{t(x,x_0)} \taudiv(A_u(x))\,\dif u
  \Bigr)\times 
			\PP_{\delta_{x_0}} (\text{survival})
   >0
\end{align*}
hence $p(x)<1$.

\item \label{fenumii} $x\in[x_{0},x_{0}\vee \mdiv]$. 

As $x_0 \leq x \leq \mdiv$, the population with initial mass $x_0$ survives if and only if the initial individual with mass $x_0$ reaches the mass $x$ and the population survives. Then,
\begin{align*}
  \PP_{\delta_{x_0}}(\text{survival}) 
  =
  e^{-D\,t(x_0,x)}\, \PP_{\delta_{x}}(\text{survival})
\end{align*}
and
\begin{align*}
  p(x)
  =
  1- (1-p(x_0))\,e^{D\,t(x_0,x)} < 1\,.
\end{align*}

\item $x\in(x_{0}\vee \mdiv,\mmax)$.

Let $x_1=x_0\vee\mdiv$. From the point \ref{fenumii}, $p(x_1)<1$.
For any individual with mass $z$ such that $x_1 \leq z < 2\,x_1 \wedge \mmax $, a survival possibility is to divide before it reaches the mass $2\,x_1 \wedge \mmax$, that the mass of the ``first'' child is less that $x_1$ and the lineage produced by this child survives. Then
\begin{align*}
\PP_{\delta_{z}}(\text{survival}) 
	& \geq 
		\int_0^{t(z,2\,x_1)} 
			\taudiv(A_t(z)) \, 
			e^{-\int_0^t \taudiv(A_u(z)) \dif u -D\,t}\,		
		\int_0^{x_1} 
				q\left(\frac{y}{A_t(z)} \right) \, 
				\frac{\PP_{\delta_{y}}(\text{survival})}{A_t(z)} \,
				\dif y \, \dif t\,.
\end{align*}
Moreover
\begin{align}
\label{proba.mass.offspring}
\int_0^{x_1} 
	\frac{1}{A_t(z)} \,
	q\left(\frac{y}{A_t(z)} \right) \, 
	\dif y \,
&= 
		\int_0^{\frac{x_1}{A_t(z)}} 
		q(\alpha) \, 
		\dif \alpha 
\geq
		\int_0^{\frac 12} 
		q(\alpha) \, 
		\dif \alpha 
= \frac 12
>0\,.
\end{align}
From the point \ref{fenumi},  $\PP_{\delta_{y}}(\text{survival})=1-p(y)>0$ for any $y<x_1$, then
$$
\int_0^{x_1} 
				q\left(\frac{y}{A_t(z)} \right) \, 
				\frac{\PP_{\delta_{y}}(\text{survival})}{A_t(z)} \,
				\dif y
	>0\,.
$$
Moreover, $\taudiv(A_t(z))>0$ for any $0 \leq t \leq t(x,2\,x_1)$ because $z>\mdiv$. Finally, we then have $p(z)<1$ for any $z\in(x_{0}\vee \mdiv, 2\,(x_{0}\vee \mdiv) \wedge \mmax)$. 

Iterating this argument, we can prove that $p(x)<1$ for any $x\in(x_{0}\vee \mdiv,\mmax)$.
\qedhere
\end{enumerate}
\end{proof}

\section{Links between the stochastic and deterministic approaches}
\label{sec.lien.approches}

The aim of this section is to link the stochastic and deterministic descriptions of the possibility of survival of a population. Thinking in terms of adaptive dynamics, the main difference between these two descriptions is that the invasion fitness of the stochastic model is usually defined as the survival probability of the population whereas for the deterministic model, the invasion fitness is the exponential growth rate of the population.

\medskip

We assume that the following assumption holds (conditions ensuring existence of such a solution are given in Section \ref{sec.gfd.eigenpbl})).
\begin{hypothesis}
\label{existence.eigenelements}
The system \eqref{eq.eigenproblem}-\eqref{eq.eigenproblem.dual} admits a solution $(\hat u,\hat v, \Lambda)$ such that $\hat v \in C[0,M]\cap C^1(0,\mmax)$.
\end{hypothesis}
We consider the process
$$
	\eta_t=\sum_{i=1}^{N_t}\delta_{X_t^i}
$$ 
defined by \eqref{da.pop.modele.reduit}.

\subsection{Stochastic process and eigenproblem}
\label{subsec.stoch.proc.eigenpbl}

The following proposition gives a stochastic representation of the deterministic model.
\begin{proposition}
\label{da.prop.generateur}
For $f\in C^1[0,\mmax]$, let $v\in C^{1,1}(\RR_+ \times (0,\mmax))$ be a solution of
\begin{align}
\label{da.eq.evolution}
		\frac{\partial}{\partial t} v(t,x)  =  \op^* v(t,x)\,,\ t\geq 0\,,\quad 
		v(0,x)  =  f(x)\,,x\in[0,\mmax]\,,
\end{align}
bounded on $[0,T]\times[0,\mmax]$ for any $T>0$, where $\op^*$ is defined by \eqref{def.G.star}. Then 
$$
	v(t,x) = \EE_{\delta_{x}}\left[ \sum_{i=1}^{N_t}f\left(X_t^i \right)\right]\,.
$$

\end{proposition}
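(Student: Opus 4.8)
The plan is to identify the right-hand side $w(t,x) := \EE_{\delta_x}\left[\sum_{i=1}^{N_t} f(X_t^i)\right] = \EE_{\delta_x}[\crochet{\eta_t, f}]$ with the solution $v(t,x)$ of the evolution equation \eqref{da.eq.evolution} by showing both satisfy the same integral (mild) equation, and then invoking uniqueness. First I would apply Proposition \ref{da.prop.semimartingale} with the test function $F = \mathrm{id}$ and with the time-dependent function $f_u(x) := v(t-u, x)$ for fixed $t$ (so that $\partial_u f_u + g\,\partial_x f_u$ combined with the jump terms reproduces $-\partial_u v + \op^* v$ evaluated appropriately). Actually, the cleanest route is to apply the semimartingale decomposition directly to $\crochet{\eta_t, f}$ with $f$ constant in time, obtaining
\begin{align*}
\crochet{\eta_t, f} = \crochet{\eta_0, f} + \int_0^t \crochet{\eta_u, \op^* f}\,\dif u + M_t^{1,\mathrm{id},f} + M_t^{2,\mathrm{id},f},
\end{align*}
using the identity noted in the excerpt that $\L\crochet{\eta,f} = \sum_i \op^* f(x_i) = \crochet{\eta, \op^* f}$ for $\eta = \sum_i \delta_{x_i}$. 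Then I would check the integrability hypotheses of Proposition \ref{da.prop.martingales}: since $F$ is linear, the integrands in both conditions are bounded by a constant times $\crochet{\eta_u, 1}$ (using boundedness of $\taudiv$ by $\bar\taudiv$, boundedness of $f$ on $[0,\mmax]$, and the fact that a division changes $\crochet{\eta,f}$ by at most $3\|f\|_\infty$ while a death changes it by at most $\|f\|_\infty$), and Lemma \ref{da.lem.taille.pop} gives $\EE_{\delta_x}[\sup_{u\le t}\crochet{\eta_u,1}] \le C_{1,t} < \infty$. Hence both $M^{1}$ and $M^{2}$ are true martingales, and taking expectations yields
\begin{align*}
w(t,x) = f(x) + \int_0^t \EE_{\delta_x}[\crochet{\eta_u, \op^* f}]\,\dif u.
\end{align*}

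The next step is to exploit the semigroup/branching structure: by the Markov property and the branching property (independence of lineages, noted in the Remark), $w$ satisfies $w(t+s, x) = \EE_{\delta_x}[w(s, \cdot)\text{-integrated against }\eta_t]$, i.e. $w(\cdot, x)$ is generated by the same semigroup whose generator acting on test functions is $\op^*$. To pin down $w = v$, I would compare the mild formulations. Define $u(t,x) := v(t,x) - w(t,x)$. From \eqref{da.eq.evolution}, $v(t,x) = f(x) + \int_0^t \op^* v(u,\cdot)(x)\,\dif u$; but the subtlety is that $v(u,\cdot)$ need not lie in the domain where the expectation identity above applies directly. The standard fix is to use the variation-of-constants / Duhamel representation against the deterministic flow $A_t$: rewrite \eqref{da.eq.evolution} in mild form as
\begin{align*}
v(t,x) = e^{-Dt}\,e^{-\int_0^t \taudiv(A_u(x))\dif u} f(A_t(x)) + \int_0^t e^{-Ds}e^{-\int_0^s \taudiv(A_u(x))\dif u}\, 2\,\taudiv(A_s(x)) \int_0^1 q(\alpha)\, v(t-s, \alpha A_s(x))\,\dif\alpha\,\dif s,
\end{align*}
and show $w$ satisfies the identical equation by conditioning on the first event $\tau$ (exactly as in the proof of Proposition \ref{da.prop.proba.extinction}, using Lemma \ref{lemme.proba.evnt}): before $\tau$ the single individual contributes $f(A_t(x))$ weighted by the survival-of-first-event probability, and at a division at time $s$ each of the two children (with masses $\alpha A_s(x)$ and $(1-\alpha)A_s(x)$) independently contributes $w(t-s, \cdot)$, with the factor $2$ and the symmetry of $q$ accounting for the two symmetric children. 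Then a Grönwall argument on $\sup_{x\in[0,\mmax]}|u(t,x)|$ over $[0,T]$ — finite since both $v$ and $w$ are bounded on $[0,T]\times[0,\mmax]$ (for $w$ this is Lemma \ref{da.lem.taille.pop}) — with the bounded kernel $2\bar\taudiv$ gives $u \equiv 0$.

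The main obstacle I anticipate is the rigorous handling of the interchange between the PDE \eqref{da.eq.evolution} (a strong/classical solution, only assumed $C^{1,1}$ on $\RR_+\times(0,\mmax)$ with boundedness on compacts) and the stochastic/mild representation near the boundary points $0$ and $\mmax$, where $g$ vanishes and the flow $A_t$ degenerates: one must make sure the Duhamel formula is valid including for initial masses approaching the boundary, and that the martingale integrability is uniform enough. This is where the boundedness assumptions on $v$ (and the boundary condition $\lim_{x\to 0} g(x)\hat u(x) = 0$ in spirit, though here it is the test/dual side) and Lemma \ref{da.lem.taille.pop} do the essential work; the rest is the routine conditioning-on-$\tau$ computation and Grönwall. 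An alternative, perhaps cleaner, route avoiding the boundary issue entirely is to stay with the martingale identity $w(t,x) = f(x) + \int_0^t \EE_{\delta_x}[\crochet{\eta_u,\op^* f}]\,\dif u$ and apply it with $f$ replaced by the time-reversed solution: namely show that $u\mapsto \EE_{\delta_x}[\crochet{\eta_u, v(t-u,\cdot)}]$ has zero derivative in $u$ on $(0,t)$ — its derivative being $\EE_{\delta_x}[\crochet{\eta_u, \op^* v(t-u,\cdot) - \partial_t v(t-u,\cdot)}] = 0$ by \eqref{da.eq.evolution} — so that this quantity is constant, equal to $\EE_{\delta_x}[\crochet{\eta_0, v(t,\cdot)}] = v(t,x)$ at $u=0$ and to $\EE_{\delta_x}[\crochet{\eta_t, f}] = w(t,x)$ at $u=t$; this requires justifying the differentiation under the expectation and the use of Proposition \ref{da.prop.semimartingale} with the time-dependent $f_u = v(t-u,\cdot)$, which again is exactly where the boundedness and integrability hypotheses are consumed.
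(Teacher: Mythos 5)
Your primary route --- establishing that both $v$ and $w(t,x) := \EE_{\delta_x}[\crochet{\eta_t,f}]$ satisfy the same Duhamel/mild equation along the flow $A_s$, then closing with Gr\"onwall against the bounded kernel $2\,\bar\taudiv$ --- is correct but is not what the paper does. The paper takes the approach you mention only in passing at the end as an ``alternative'': it sets $M_s := \crochet{\eta_s, v(t-s,\cdot)}$, applies Proposition~\ref{da.prop.semimartingale} with $F=\mathrm{id}$ and the time-dependent $f_u = v(t-u,\cdot)$, observes that the finite-variation part $\int_0^s \crochet{\eta_u,\, \op^* v(t-u,\cdot) - \partial_t v(t-u,\cdot)}\,\dif u$ vanishes identically because $v$ solves \eqref{da.eq.evolution}, and then invokes the boundedness hypothesis on $v$ together with Lemma~\ref{da.lem.taille.pop} to check the integrability conditions of Proposition~\ref{da.prop.martingales} and conclude $(M_s)_{0\le s\le t}$ is a martingale; $\EE_{\delta_x}(M_t)=\EE_{\delta_x}(M_0)$ then gives the claim in one line. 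Comparing the two: your Duhamel route substitutes a double use of the first-event decomposition (as in Lemma~\ref{lemme.proba.evnt} and Proposition~\ref{da.prop.proba.extinction}) plus a variation-of-constants computation and a Gr\"onwall step for a single application of the semimartingale decomposition with time-dependent test function. It is more elementary and avoids time-dependent $f_u$, at the cost of being longer and of having to justify the characteristics argument up to the degenerate boundary. The paper's route is shorter precisely because the proposition's hypothesis (boundedness of $v$ on $[0,T]\times[0,\mmax]$) is tailored to make Proposition~\ref{da.prop.martingales} applicable, and the boundary degeneracy you flag never arises there since the flow stays inside $(0,\mmax)$ from any interior start and $v$ is bounded up to the boundary by hypothesis. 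Both arguments are valid.
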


\begin{proof}
Let $t\geq 0$ be fixed.  For any $s \leq t$, we define
\begin{align*}
  M_s
  & =
  \sum_{i=1}^{N_s} v(t-s, X_s^i).
\end{align*}
Proposition \ref{da.prop.semimartingale} gives
\begin{align*}
  &M_s
  =
  v(t,X_0)
  +
  \int_0^s \left(
    \sum_{i=1}^{N_u}g(X_u^i)\,\partial_x v(t-u,X_u^i) 
    - 
    \partial_t v(t-u,X_u^i)
  \right)\, \dif u
\\
  &\quad 
  + \iint\limits_{[0,t]\times[0,1]} 
    \sum_{j=1}^{N_u} \taudiv(X_\umoins^j) 
	   \bigl[
			v(t-u,\alpha X_{\umoins}^j)
			+v(t-u,(1-\alpha)\, X_{\umoins}^j)
        \\[-1.2em] 
        \nonumber
        & \qquad\qquad\qquad\qquad\qquad\quad
                     \qquad\qquad\qquad\qquad\qquad
			-
			v(t-u,X_{\umoins}^j)  
	    \bigr] q(\alpha)\,
		 \dif \alpha \dif u  
\\
  &\quad  
  -D\, \int_0^s 
	 \sum_{j=1}^{N_u} 
		v(t-u,X_u^j)
		 \, \dif u
\\
		&\quad + \iiiint\limits_{[0,s]\times\NN^*\times[0,1]^2} 
		1_{\{j\leq N_{\umoins}\}} \,
		1_{\{\theta \leq \taudiv(X_\umoins^j)/
				\bar \taudiv\}}\, 
		\bigl[
			v(t-u,\alpha X_{\umoins}^j)
			+v(t-u,(1-\alpha)\, X_{\umoins}^j)
        \\[-1.2em] 
        \nonumber
        & \qquad\qquad\qquad\qquad\qquad\quad
                     \qquad\qquad\qquad\qquad\qquad
			-v(t-u,X_{\umoins}^j) 
	    \bigr] 
	    \;
		 \tilde \MP_1(\dif u, \dif j, \dif \alpha, \dif \theta)  
\\
  &\quad  
  - \iint\limits_{[0,s]\times\NN^*} 
	  1_{\{j\leq N_{\umoins}\}} \, 
      v(t-u,X_{\umoins}^j)	  
	   \, \tilde \MP_2(\dif u, \dif j)\,.
\end{align*}
Then
\begin{align*}
  M_s
  & =
		 v(t,X_0)
		 +
		 \int_0^s \sum_{j=1}^{N_u}\left(
		 			\op^* v(t-u,X_u^i) 
		 			- \partial_t v(t-u,X_u^i)
		 			\right)\, \dif u
\\
		&\qquad + \iiiint\limits_{[0,s]\times\NN^*\times[0,1]^2} 
		1_{\{j\leq N_{\umoins}\}} \,
		1_{\{\theta \leq \taudiv(X_\umoins^j)/
				\bar \taudiv\}}\, 
		\bigl[
			v(t-u,\alpha X_{\umoins}^j)
			+v(t-u,(1-\alpha)\, X_{\umoins}^j)
        \\[-1.2em] 
        \nonumber
        & \qquad\qquad\qquad\qquad\qquad\quad
                     \qquad\qquad\qquad\qquad\qquad
			-v(t-u,X_{\umoins}^j) 
	    \bigr] 
	    \;
		 \tilde \MP_1(\dif u, \dif j, \dif \alpha, \dif \theta)  
\\
  &\qquad  
  - \iint\limits_{[0,s]\times\NN^*} 
	  1_{\{j\leq N_{\umoins}\}} \, 
      v(t-u,X_{\umoins}^j)	  
	   \, \tilde \MP_2(\dif u, \dif j).
\end{align*}

As $v$ is a solution of \eqref{da.eq.evolution}, the second term equals zero. By assumption, $v$ is bounded on $[0,t]\times [0,\mmax]$. From Proposition \ref{da.prop.martingales} and Lemma \ref{da.lem.taille.pop}, $(M_s)_{s\geq 0}$ is then a martingale. Thus $\EE_{\delta_{x}}(M_t)=\EE_{\delta_{x}}(M_0)$ and the conclusion follows.
\end{proof}

\medskip

The function $v(t,x)=e^{\Lambda\,t} \, \hat v(x)$ is a trivial solution of \eqref{da.eq.evolution} with initial condition $v(0,x)=\hat v(x)$. 
We then deduce the following result.

\begin{corollary}
\label{da.prop.semi.groupe}
For any $t>0$,
\begin{align*}
	\EE_{\delta_{x}}\left[\sum_{i=1}^{N_t} \hat v(X_t^i) \right] = e^{\Lambda\,t} \, \hat v(x).
\end{align*}
\end{corollary}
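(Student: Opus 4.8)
The plan is to apply Proposition~\ref{da.prop.generateur} to the specific test function $f=\hat v$, using the candidate solution $v(t,x)=e^{\Lambda t}\,\hat v(x)$ already singled out in the sentence preceding the statement. Once this $v$ is checked to satisfy all the hypotheses of Proposition~\ref{da.prop.generateur}, its conclusion $v(t,x)=\EE_{\delta_x}\bigl[\sum_{i=1}^{N_t}f(X_t^i)\bigr]$ is, with $f=\hat v$, exactly the asserted identity.

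So the only work is to verify the hypotheses of Proposition~\ref{da.prop.generateur}. First, $v$ solves \eqref{da.eq.evolution}: since $\op^*$ acts only on the space variable, $\op^* v(t,x)=e^{\Lambda t}\,\op^*\hat v(x)$, and by the adjoint eigenequation \eqref{eigenequation.dual} this equals $e^{\Lambda t}\,\Lambda\,\hat v(x)=\Lambda\, v(t,x)$; on the other hand $\partial_t v(t,x)=\Lambda\, e^{\Lambda t}\hat v(x)=\Lambda\, v(t,x)$, so $\partial_t v=\op^* v$, and $v(0,\cdot)=\hat v=f$. Second, the regularity: by Assumption~\ref{existence.eigenelements}, $\hat v\in C[0,M]\cap C^1(0,\mmax)$, so $v$ is smooth in $t$ and $C^1$ in $x$ on $\RR_+\times(0,\mmax)$, hence $v\in C^{1,1}(\RR_+\times(0,\mmax))$. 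Third, boundedness: for every $T>0$ one has $\sup_{[0,T]\times[0,\mmax]}|v|\le e^{|\Lambda|T}\,\sup_{[0,M]}|\hat v|<\infty$ because $\hat v$ is continuous on the compact interval $[0,M]$. This is the only point where the continuity of $\hat v$ up to the boundary, guaranteed by Assumption~\ref{existence.eigenelements}, is genuinely used.

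Having checked all the hypotheses, Proposition~\ref{da.prop.generateur} applies and yields $e^{\Lambda t}\,\hat v(x)=v(t,x)=\EE_{\delta_x}\bigl[\sum_{i=1}^{N_t}\hat v(X_t^i)\bigr]$ for every $t>0$ (the case $t=0$ being trivial), which is the corollary. I do not anticipate any real obstacle here: the statement is essentially a substitution of the eigenpair into Proposition~\ref{da.prop.generateur}, and the three verifications above are all one-line computations; the only mildly delicate item is the boundedness requirement, which is immediate from the continuity of $\hat v$ on $[0,M]$.
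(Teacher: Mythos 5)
Your proposal is correct and follows exactly the route the paper intends: the paper simply notes that $v(t,x)=e^{\Lambda t}\hat v(x)$ is a ``trivial solution'' of \eqref{da.eq.evolution} and deduces the corollary from Proposition~\ref{da.prop.generateur}. You have merely spelled out the three routine verifications (the eigenequation gives $\partial_t v=\op^* v$, the $C^{1,1}$ regularity on $\RR_+\times(0,\mmax)$, and boundedness on $[0,T]\times[0,\mmax]$ via continuity of $\hat v$ on the compact $[0,M]$) that the paper leaves implicit.
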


\medskip

The previous proposition allows to obtain convergence results of solutions of \eqref{da.eq.evolution}. 

\begin{corollary}
\label{da.vitesse.cv}
Let $f\in C^1[0,M]$ such as there exist finite constants $C^->0$ and $C^+>0$ such as, for any $x\in(0,M)$,
\[ 
 C^-\,\hat v(x)\leq f(x) \leq C^+\,\hat v(x)\,.
\]
Let $v \in C^{1,1}(\RR_+ \times [0,M])$ be a solution of \eqref{da.eq.evolution} with initial condition $f$. Then, for any $x\in (0,M)$:
\begin{align*}
  C^- \hat v(x) 
  \leq 
  v(t,x)\,e^{-\Lambda\,t}  \leq C^+\,\hat v(x)\,.
\end{align*}
\end{corollary}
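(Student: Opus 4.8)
The plan is to read the corollary off the stochastic representation of Proposition \ref{da.prop.generateur} together with the explicit identity of Corollary \ref{da.prop.semi.groupe}. First I would check that Proposition \ref{da.prop.generateur} applies to the given $v$: by hypothesis $v \in C^{1,1}(\RR_+ \times [0,M])$, hence $v$ is continuous on the compact set $[0,T]\times[0,M]$ for every $T>0$ and therefore bounded there. Since moreover $v$ solves \eqref{da.eq.evolution} with initial condition $f \in C^1[0,M]$, Proposition \ref{da.prop.generateur} gives
\[
  v(t,x) = \EE_{\delta_x}\Bigl[\sum_{i=1}^{N_t} f(X_t^i)\Bigr]\,.
\]

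Next I would use the sandwich on $f$. The functions $f$ and $\hat v$ are both continuous on $[0,M]$ and satisfy $C^-\,\hat v(x) \le f(x) \le C^+\,\hat v(x)$ on $(0,M)$, so by continuity these inequalities hold on all of $[0,M]$; in particular $C^-\,\hat v(X_t^i) \le f(X_t^i) \le C^+\,\hat v(X_t^i)$ for each $i$. Since $\hat v \ge 0$ one may sum over $i=1,\dots,N_t$ and then take $\EE_{\delta_x}$ (all expectations being finite because $\hat v$ is bounded on $[0,M]$ and $\EE_{\delta_x}[N_t]<\infty$ by Lemma \ref{da.lem.taille.pop}), obtaining
\[
  C^-\,\EE_{\delta_x}\Bigl[\sum_{i=1}^{N_t}\hat v(X_t^i)\Bigr]
  \le v(t,x)
  \le C^+\,\EE_{\delta_x}\Bigl[\sum_{i=1}^{N_t}\hat v(X_t^i)\Bigr]\,.
\]
Applying Corollary \ref{da.prop.semi.groupe}, which identifies $\EE_{\delta_x}[\sum_i \hat v(X_t^i)] = e^{\Lambda t}\,\hat v(x)$, and multiplying through by $e^{-\Lambda t}$ yields exactly $C^-\,\hat v(x) \le v(t,x)\,e^{-\Lambda t} \le C^+\,\hat v(x)$.

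I do not expect a genuine obstacle here: the statement is essentially the monotonicity of the linear evolution semigroup associated with \eqref{da.eq.evolution}, made transparent by the branching-process representation, and $\hat v$ plays the role of a reference profile because it is the exact exponential solution $v(t,x)=e^{\Lambda t}\hat v(x)$. The only points requiring a word of care are the applicability of Proposition \ref{da.prop.generateur} (handled by the compactness/continuity argument above) and the passage of the pointwise two-sided bound through the sum and the expectation (legitimate since $\hat v\ge 0$ and the relevant expectations are finite).
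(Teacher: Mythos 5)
Your proof is correct and takes essentially the same approach as the paper: apply Proposition \ref{da.prop.generateur} to get the stochastic representation of $v(t,x)$, sandwich $f$ between $C^-\hat v$ and $C^+\hat v$, and invoke Corollary \ref{da.prop.semi.groupe}. The extra verifications you spell out (boundedness of $v$ on $[0,T]\times[0,M]$, extending the pointwise bound to the endpoints by continuity, and finiteness of the relevant expectations via Lemma \ref{da.lem.taille.pop}) are all details the paper leaves implicit but are correctly handled.
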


\begin{proof}
From Proposition \ref{da.prop.generateur} and Corollary \ref{da.prop.semi.groupe}, we have
\begin{align*}
v(t,x)
	&=
		\EE_{\delta_{x}}\left[ \sum_{i=1}^{N_t}f\left(X_t^i \right)\right]
	\leq
		C^+\,\EE_{\delta_{x}}\left[ \sum_{i=1}^{N_t}\hat v \left(X_t^i \right)\right]
	\leq
		C^+\,e^{\Lambda\,t}\,\hat v(x) \,.
\end{align*}
The right inequality is proved similarly.
\end{proof}

\begin{remark}
The previous corollary can also be proved using deterministic methods. For example, for $\Lambda>0$, we can use the generalized relative entropy following the approach described in \citet[Section 4.2.]{perthame2007a}.
\end{remark}

\begin{proposition}
\label{da.prop.hat_v.positive}
The function $\hat v$ is positive on $(0,\mmax)$.
\end{proposition}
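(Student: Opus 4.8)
The plan is to show that the zero set of $\hat v$ inside $(0,\mmax)$ is empty by proving successively that (i) $\hat v$ is not identically zero, (ii) any zero of $\hat v$ propagates to the right, so that $\{\hat v>0\}\cap(0,\mmax)$ is an interval $(0,x^*)$, and (iii) $x^*<\mmax$ is impossible, using the linear structure of the eigenequation below $\mdiv$ and the non‑local term above $\mdiv$.

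First I would note that $\hat v\not\equiv0$ on $(0,\mmax)$, since $\int_0^\mmax\hat v\,\hat u\,\dif x=1$ forces $\hat v>0$ on a set of positive Lebesgue measure; hence, by continuity, $U\eqdef\{x\in(0,\mmax):\hat v(x)>0\}$ is a non‑empty open set. The key step is the propagation claim: \emph{if $\hat v(x)=0$ for some $x\in(0,\mmax)$, then $\hat v\equiv0$ on $[x,\mmax]$}. This follows from Corollary \ref{da.prop.semi.groupe}: since $e^{\Lambda t}\hat v(x)=\EE_{\delta_x}\bigl[\sum_{i=1}^{N_t}\hat v(X_t^i)\bigr]=0$ and $\hat v\ge0$, the nonnegative random variable $\sum_i\hat v(X_t^i)$ is a.s.\ zero; on the positive‑probability event $\{\tau>t\}$ that no division or death occurs before time $t$ (of probability $e^{-Dt-\int_0^t\taudiv(A_u(x))\dif u}$, see the proof of Lemma \ref{lemme.proba.evnt}) the population consists of the single individual of mass $A_t(x)$, whence $\hat v(A_t(x))=0$. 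Since $g>0$ on $(0,\mmax)$, the orbit $\{A_t(x):0\le t<T_{\text{exit}}(x)\}$ is exactly $[x,\mmax)$, so $\hat v\equiv0$ there, and continuity of $\hat v$ at $\mmax$ gives $\hat v(\mmax)=0$. In particular no zero of $\hat v$ may lie to the left of a point of $U$, so $U=(0,x^*)$ for some $x^*\in(0,\mmax]$; and if $x^*<\mmax$ then, by continuity, $\hat v(x^*)=0$ and $\hat v\equiv0$ on $[x^*,\mmax)$.

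It then remains to rule out $x^*<\mmax$. If $0<x^*<\mdiv$, then on $(0,\mdiv)$ we have $\taudiv\equiv0$, so the adjoint eigenequation \eqref{eigenequation.dual} reduces to the scalar linear ODE $g(x)\,\hat v'(x)=(D+\Lambda)\,\hat v(x)$; since $g>0$ on $(0,\mdiv)$ and $D+\Lambda>0$, uniqueness for this ODE together with $\hat v(x^*)=0$ forces $\hat v\equiv0$ on all of $(0,\mdiv)$, contradicting $\hat v>0$ on $(0,x^*)$. If $\mdiv\le x^*<\mmax$, choose $x\in\bigl(x^*,\min(2x^*,\mmax)\bigr)$ (a non‑empty interval because $0<x^*<\mmax$); then $x>x^*\ge\mdiv$ so $\taudiv(x)>0$, and $\hat v$ vanishes identically on the neighbourhood $(x^*,\mmax)$ of $x$, so $\hat v(x)=\hat v'(x)=0$. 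Substituting into \eqref{eigenequation.dual} yields $2\,\taudiv(x)\int_0^1 q(\alpha)\,\hat v(\alpha x)\,\dif\alpha=0$, hence $\int_0^1 q(\alpha)\,\hat v(\alpha x)\,\dif\alpha=0$; but for $\alpha\in(0,\tfrac12)$ one has $\alpha x<x/2<x^*$, so $\hat v(\alpha x)>0$, and since $q$ is symmetric with respect to $1/2$ the set $\{q>0\}\cap(0,\tfrac12)$ has positive Lebesgue measure (because $\int_0^{1/2}q=\tfrac12$). Therefore the integral is strictly positive — a contradiction. Hence $x^*=\mmax$, i.e.\ $\hat v>0$ on $(0,\mmax)$.

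The main obstacle is the boundary bookkeeping: one has to be sure the orbit argument in step (ii) covers all of $[x,\mmax)$ whether or not $T_{\text{exit}}(x)$ is finite, and that the contradiction in step (iii) is available for every position of $x^*$ — the split at $\mdiv$ is precisely what handles the degenerate zone where $\taudiv$ vanishes and the non‑local term gives no information, while it is the non‑local term that does the work above $\mdiv$. A purely PDE variant is also possible, replacing Corollary \ref{da.prop.semi.groupe} by a Gronwall estimate on \eqref{eigenequation.dual} to push a zero of $\hat v$ forward along the flow, but the stochastic representation makes this propagation transparent.
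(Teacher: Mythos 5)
Your proof is correct and takes a genuinely different route from the paper's. The paper picks a seed point $x_0$ with $\hat v(x_0)>0$ and propagates positivity outward in three steps, all via the stochastic representation of Corollary~\ref{da.prop.semi.groupe}: backward along the flow to cover $(0,x_0)$, forward along the flow (no division possible) to cover $[x_0,x_0\vee\mdiv]$, and forward iteratively through the division mechanism to cover $(x_0\vee\mdiv,\mmax)$. You instead establish a single propagation lemma — if $\hat v(x)=0$ then $\hat v\equiv 0$ on $[x,\mmax]$ — whose proof does use the same stochastic representation (expectation zero of a non-negative sum forces $\hat v(A_t(x))=0$ on the positive-probability no-event path), and then conclude that $U=\{\hat v>0\}$ must be an interval $(0,x^*)$. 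Ruling out $x^*<\mmax$ is then done by two purely deterministic arguments on the pointwise eigenequation \eqref{eigenequation.dual}: below $\mdiv$ the non-local term vanishes and the resulting scalar ODE $g\hat v'=(D+\Lambda)\hat v$ has uniqueness, so a zero at $x^*$ forces $\hat v\equiv 0$ on $(0,\mdiv)$; at or above $\mdiv$ you evaluate the eigenequation at some $x\in(x^*,2x^*\wedge\mmax)$ where $\hat v(x)=\hat v'(x)=0$ and $\taudiv(x)>0$, and find that the non-local integral $\int_0^1 q(\alpha)\hat v(\alpha x)\,\dif\alpha$ must vanish although its integrand is strictly positive on $(0,1/2)$ (since $\alpha x<x/2<x^*$ there) and $\int_0^{1/2}q=1/2$. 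What your reorganization buys: the proof is cast as a clean contradiction with a structured zero set, the doubling iteration of the paper's third step disappears (a single evaluation just above $x^*$ suffices), and two of the three steps become elementary ODE/substitution arguments. What the paper's approach buys: it stays entirely in the probabilistic framework and never needs the pointwise classical form of the eigenequation — useful if one wanted to weaken the $C^1$ regularity on $\hat v$ away from $\mdiv$. Both proofs ultimately hinge on the martingale identity of Corollary~\ref{da.prop.semi.groupe}; yours is a hybrid that uses it only once, for the propagation step.
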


The proof follows a similar approach as for Theorem \ref{da.th.proba_inf1}.

\begin{proof} 
Let $x_0 \in (0,\mmax)$ be such that $\hat v(x_0)>0$ (this $x_0$ exists because $\int_0^\mmax \hat v(x)\, \hat u(x) \dif x = 1$).

For any $x \in (0,\mmax)$ and $y\geq x$, $t(x,y)$ is defined by \eqref{temps.atteinte.masse}.
\begin{enumerate}
\item Let $0<x<x_0$. The probability $p_0$ that one individual with mass $x$ reaches the mass $x_0$ without division and death is  
\begin{align*}
	p_0 
	= 
	\exp\Bigl(
	  {-D\,t(x,x_0) -\int_0^{t(x,x_0)} \taudiv(A_u(x))\,\dif u}
	\Bigr) 
	>0\,.
\end{align*}
From Corollary \ref{da.prop.semi.groupe} we then get
$$
 e^{\Lambda\,t(x,x_0)}\,\hat v(x)
 	= \EE_{\delta_{x}}\left[\sum_{i=1}^{N_{t(x,x_0)}} \hat v(X_{t(x,x_0)}^i) \right]
 	\geq p_0 \, \hat v(x_0)
 	>0.
$$
Hence $\hat v$ is positive on $(0, x_0]$.

\item Let $x\in[x_{0},x_{0}\vee \mdiv]$. 
The population with initial mass $x_0$ survives until the time $t(x_0, x)$ if and only if the individual with mass $x_0$ reaches the mass $x$. We then get
\begin{align*}
0< e^{\Lambda\,t(x_0,{ x})}\,\hat v(x_0)
  &=  
  \EE_{x_0}\left[\sum_{i=1}^{N_{t(x_0,{ x})}} \hat v(X_{t(x_0,{ x})}^i) \right]
	=
  e^{-D\,t(x_0,{ x})}\, \hat v(x)\, .
\end{align*}

\item Let $x\in(x_{0}\vee \mdiv,\mmax)$.
Let $x_0\vee\mdiv \leq x_1  < \mmax $ be such that $\hat v(x_1)>0$.
In the population with initial mass $z$, we have $N_{t(z,2\,x_1)}>0$ if the initial individual divides before $t(z,2\,x_1)$ and the lineage produced by the smallest individual survives.
We then get the following lower bound:
\begin{align*} 
e^{\Lambda\,t(z,2\,x_1)}\,\hat v(z)
=&
  \EE_{z}\left[\sum_{i=1}^{N_{t(z,2\,x_1)}} \hat v(X_{t(z,2\,x_1)}^i) \right]
\\
  \geq & 
  \int_0^{t(z,2\,x_1)} 
			\taudiv(A_t(z)) \, 
			e^{-\int_0^t \taudiv(A_u(z)) \dif u -D\,t}\,
\\
	& \qquad		
		\int_0^{x_1} 
				\frac{1}{A_t(z)}
				q\left(\frac{y}{A_t(z)} \right) \, 
					\EE_{y}
						\left[
							\sum_{i=1}^{N_{t(z,2\,x_1)-t}} \hat v(X_{t(z,2\,x_1)-t}^i) 
						\right] \,
				\dif y \, \dif t\,.
\end{align*}
From Corollary \ref{da.prop.semi.groupe} and Point \ref{fenumi}, for any $y \leq x_1$:
$$
	\EE_{y}\left[\sum_{i=1}^{N_{t(z,2\,x_1)-t}} \hat v(X_{t(z,2\,x_1)-t}^i)\right]
	=
	e^{\Lambda\,(t(z,2\,x_1)-t)}\, \hat v(y)>0 \,.
$$
By \eqref{proba.mass.offspring} and since $\taudiv(A_t(z))>0$ for any $0 \leq t \leq t(x,2\,x_1)$ because $z>\mdiv$, we have $\hat v(z)>0$.

\medskip

The result follows by applying recursively the last argument.
\qedhere
\end{enumerate}
\end{proof}

\begin{remark}
The previous proposition can also be proved using a deterministic approach as the one of \cite[Lemma 1.3.1]{Doumic2010a}.
\end{remark}

\medskip

\begin{corollary}
\label{da.corollaire.unicite.vp}
Let $(\hat u_1,\hat v_1,\Lambda_1)$ and $(\hat u_2, \hat v_2,\Lambda_2)$ be two solutions of \eqref{eq.eigenproblem}-\eqref{eq.eigenproblem.dual}, then $\Lambda_1=\Lambda_2$.
\end{corollary}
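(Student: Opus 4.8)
The plan is to exploit Corollary \ref{da.prop.semi.groupe}, which relates each eigenelement $\hat v_i$ to the stochastic process via $\EE_{\delta_x}[\sum_{k=1}^{N_t} \hat v_i(X_t^k)] = e^{\Lambda_i t}\,\hat v_i(x)$. Since, by Proposition \ref{da.prop.hat_v.positive}, both $\hat v_1$ and $\hat v_2$ are strictly positive on $(0,M)$, and both lie in $C[0,M]\cap C^1(0,M)$ by Assumption \ref{existence.eigenelements}, I first want to compare them on a compact subinterval. Fix any $x_0 \in (0,M)$ and a closed interval $[a,b] \subset (0,M)$ containing $x_0$ in its interior; on such an interval $\hat v_1$ and $\hat v_2$ are bounded above and below by positive constants, so there exist finite $C^-, C^+ > 0$ with $C^- \hat v_2(x) \le \hat v_1(x) \le C^+ \hat v_2(x)$ for all $x \in [a,b]$. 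The difficulty is that this two-sided bound is only available on $[a,b]$, not on all of $(0,M)$, because the behaviour of the eigenfunctions near $0$ and near $M$ is not controlled a priori — this is the main obstacle.

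To get around this, I would argue that the process started from $x_0 \in (a,b)$ spends, with positive probability, an initial time interval inside $[a,b]$, and more importantly use Corollary \ref{da.prop.semi.groupe} directly rather than Corollary \ref{da.vitesse.cv}. Applying Corollary \ref{da.prop.semi.groupe} to $\hat v_1$ and to $\hat v_2$ at the point $x_0$ and dividing, one gets
\begin{align*}
  e^{\Lambda_1 t}\,\hat v_1(x_0) = \EE_{\delta_{x_0}}\!\left[\sum_{k=1}^{N_t} \hat v_1(X_t^k)\right],
  \qquad
  e^{\Lambda_2 t}\,\hat v_2(x_0) = \EE_{\delta_{x_0}}\!\left[\sum_{k=1}^{N_t} \hat v_2(X_t^k)\right].
\end{align*}
Now I would bound $\hat v_1 \le C\,\hat v_2$ globally — which does hold if we also argue that $\hat v_1/\hat v_2$ is bounded near the endpoints, or, more robustly, by truncation: restrict the sums to individuals whose masses lie in $[a,b]$ and control the contribution of the rest. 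A cleaner route: apply Corollary \ref{da.vitesse.cv} with $f = \hat v_1$ against the reference $\hat v_2$ (the hypothesis of that corollary is exactly the existence of such $C^-, C^+$, so one does need the global two-sided bound). Granting the global bound, Corollary \ref{da.vitesse.cv} gives $C^- \hat v_2(x) \le v(t,x) e^{-\Lambda_2 t} \le C^+ \hat v_2(x)$ where $v(t,x) = e^{\Lambda_1 t}\hat v_1(x)$ solves \eqref{da.eq.evolution} with data $\hat v_1$; hence $C^- \le e^{(\Lambda_1 - \Lambda_2)t}\,\hat v_1(x)/\hat v_2(x) \le C^+$ for all $t \ge 0$ and all $x \in (0,M)$. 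Since $\hat v_1(x)/\hat v_2(x)$ is a fixed positive number at $x = x_0$, letting $t \to \infty$ forces $\Lambda_1 - \Lambda_2 \le 0$; exchanging the roles of the two solutions gives $\Lambda_2 - \Lambda_1 \le 0$, hence $\Lambda_1 = \Lambda_2$.

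The one gap to fill carefully is the global comparison $C^- \hat v_2 \le \hat v_1 \le C^+ \hat v_2$ on all of $(0,M)$, since the two-sided bound is immediate only on compact subintervals. I would close this by using the eigenrelation itself near the endpoints: the normalization $\int_0^M \hat v_i \hat u_i = 1$ together with continuity of $\hat v_i$ on $[0,M]$ shows each $\hat v_i$ is bounded on $[0,M]$, so $\hat v_1 \le C^+ \hat v_2$ fails only where $\hat v_2 \to 0$. Near such points one applies Corollary \ref{da.prop.semi.groupe} starting from $x$ close to $0$ (resp. $M$): letting the individual flow up to a fixed interior point $x_1$ without dividing or dying has positive probability $p_0(x)$, giving $e^{\Lambda_2 t(x,x_1)}\hat v_2(x) \ge p_0(x)\,\hat v_2(x_1) > 0$ and similarly for $\hat v_1$, and dividing these shows $\hat v_1(x)/\hat v_2(x)$ stays bounded as $x \to 0$; the endpoint $M$ is unreachable by the flow ($t(x,M)=+\infty$) so no issue arises there — in fact this is precisely the mechanism already used in the proof of Proposition \ref{da.prop.hat_v.positive}, so the argument is a direct adaptation. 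Once the global bound is in hand, the $t\to\infty$ argument above concludes the proof.
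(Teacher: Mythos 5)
Your route is genuinely different from the paper's, but it contains a gap that makes it circular as written. The paper's proof is the standard duality pairing: it integrates $\hat u_1$ against $\hat v_2$, writes $\Lambda_2\int_0^M \hat u_1\hat v_2 = \int_0^M \hat u_1\,\op^*\hat v_2 = \int_0^M (\op\hat u_1)\,\hat v_2 = \Lambda_1\int_0^M \hat u_1\hat v_2$, and concludes from $\int_0^M\hat u_1\hat v_2>0$ (Proposition \ref{da.prop.hat_v.positive}) — three lines, no stochastics. Your approach via Corollaries \ref{da.prop.semi.groupe} and \ref{da.vitesse.cv} is in principle a nice alternative, but it hinges entirely on the global two-sided bound $C^-\hat v_2\le\hat v_1\le C^+\hat v_2$ on $(0,M)$, and that bound is \emph{equivalent} to the conclusion, not something one can establish first.

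To see this concretely, take $x<\mdiv$, so the initial individual cannot divide before reaching mass $\mdiv$. Corollary \ref{da.prop.semi.groupe} at $t=t(x,\mdiv)$ (exactly as in Step 2 of the proof of Proposition \ref{da.prop.hat_v.positive}) gives the \emph{exact} identity $\hat v_i(x)=e^{-(\Lambda_i+D)\,t(x,\mdiv)}\,\hat v_i(\mdiv)$, hence
\begin{equation*}
\frac{\hat v_1(x)}{\hat v_2(x)} \;=\; e^{(\Lambda_2-\Lambda_1)\,t(x,\mdiv)}\,\frac{\hat v_1(\mdiv)}{\hat v_2(\mdiv)}\,.
\end{equation*}
Since $t(x,\mdiv)\to\infty$ as $x\to 0$ (this follows, e.g., from Assumption \ref{hypotheses}-\ref{hyp.g.2}), the ratio $\hat v_1/\hat v_2$ tends to $0$ or $+\infty$ near $0$ whenever $\Lambda_1\neq\Lambda_2$: the two-sided bound you want to prove already presupposes $\Lambda_1=\Lambda_2$. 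The specific step in your proposal — \emph{dividing} the two lower bounds $e^{\Lambda_i t(x,x_1)}\hat v_i(x)\ge p_0(x)\hat v_i(x_1)$ to conclude that $\hat v_1(x)/\hat v_2(x)$ stays bounded — is also incorrect in itself: two one-sided lower bounds on $\hat v_1(x)$ and on $\hat v_2(x)$ control the ratio in neither direction. (The claim that ``no issue arises'' near $M$ is likewise unjustified, since both $\hat v_i$ may vanish there.) Your stochastic route can nonetheless be salvaged without the global two-sided bound: from Proposition \ref{majoration.esp.Nt} applied to $(\hat v_2,\Lambda_2)$ one has $\EE_{\delta_{x_0}}(N_t)\le C_{x_0}\,e^{\Lambda_2 t}$; since $\hat v_1\in C[0,M]$ is bounded, Corollary \ref{da.prop.semi.groupe} applied to $\hat v_1$ gives $e^{\Lambda_1 t}\hat v_1(x_0)=\EE_{\delta_{x_0}}\bigl[\sum_{i=1}^{N_t}\hat v_1(X_t^i)\bigr]\le\|\hat v_1\|_\infty\,C_{x_0}\,e^{\Lambda_2 t}$, and since $\hat v_1(x_0)>0$ this forces $\Lambda_1\le\Lambda_2$; symmetry concludes. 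This is the correct way to make your stochastic argument rigorous, but as written your proposal has a genuine gap.
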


\begin{proof}
Given that $\op^* \hat v_2=\Lambda_2\,\hat v_2$ and $\op \hat u_1=\Lambda_1\,\hat u_1$, we have
\begin{align*}
\Lambda_2\,\int_0^M \hat u_1(x)\,\hat v_2(x)\,\dif x
	&=
		\int_0^M \hat u_1(x)\,\op^* \hat v_2(x) \, \dif x
\\
	&=
		\int_0^M \op \hat u_1(x)\,\hat v_2(x) \, \dif x
\\
	&= 
		\Lambda_1 \, \int_0^M \hat u_1(x)\,\hat v_2(x)\,\dif x\,.
\end{align*}
From Proposition \ref{da.prop.hat_v.positive}, $\hat v_2(x)>0$ for any $x\in(0,M)$, thus
$
 \int_0^M \hat u_1(x)\,\hat v_2(x)\,\dif x>0
$
and the conclusion follows.
\end{proof}

\bigskip

We introduce notations which will be useful is some proofs.
For any $0<\varepsilon<\frac{\mmax}{2}$, we set
\begin{itemize}
\item[\textbullet] $L_t^\varepsilon$: the number of individuals with mass in $[0,\varepsilon)$ at time $t$,
\item[\textbullet] $M_t^\varepsilon$: the number of individuals with mass in $[\varepsilon,\mmax-\varepsilon]$ at time $t$,
\item[\textbullet] $K_t^\varepsilon$: the number of individuals with mass in $(\mmax-\varepsilon,\mmax]$ at time $t$,
\end{itemize}
so that $N_t = L_t^\varepsilon+M_t^\varepsilon+K_t^\varepsilon$.

\begin{remark}
\label{remark.hat.v}
As we will see in Section \ref{sec.proof.th.pb.adjoint}, under convenient assumptions, the solution of
$$
 	\phi(x) 
 		=
 		2\, \int_0^\infty e^{-\int_0^t \Lambda+D+b(A_s(x)) \, \dif s}\, b(A_t(x))\,
 			\int_0^1q(\alpha)\,\phi(\alpha\,A_t(x))\,\dif \alpha\,\dif t
$$
is solution of \eqref{eigenequation.dual}. 
So $\phi(0)=0$ and if $b(\mmax)=0$ then $\phi(\mmax)=0$. The eigenfunction $\hat v$ is positive on $(0,\mmax)$ but might vanish at $0$ and $\mmax$.
That is the reason why we separate $[0,\mmax]$ in three intervals : $[0,\varepsilon)$, $[\varepsilon,\mmax-\varepsilon]$ and $(\mmax-\varepsilon,\mmax]$.
\end{remark}

\begin{proposition}
\label{majoration.esp.Nt}
For any $x\in (0,M)$, there exists $C_x>0$ such as
\[
  \EE_{\delta_{x}}(N_t) \leq C_x\, e^{\Lambda\,t}
  \,,\quad \forall t\geq 0\,.
\]
\end{proposition}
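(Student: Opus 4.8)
The plan is to bound $\EE_{\delta_x}(N_t)$ by comparing $N_t = L_t^\varepsilon + M_t^\varepsilon + K_t^\varepsilon$ with the martingale-type quantity $\EE_{\delta_x}[\sum_i \hat v(X_t^i)]$, which by Corollary~\ref{da.prop.semi.groupe} equals exactly $e^{\Lambda t}\hat v(x)$. The difficulty is that $\hat v$, although positive on $(0,\mmax)$ by Proposition~\ref{da.prop.hat_v.positive}, may vanish at $0$ and at $\mmax$, so one cannot directly bound $N_t$ from above by a constant times $\sum_i \hat v(X_t^i)$. This is exactly why the splitting into the three mass-ranges $[0,\varepsilon)$, $[\varepsilon,\mmax-\varepsilon]$, $(\mmax-\varepsilon,\mmax]$ was introduced in Remark~\ref{remark.hat.v}. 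On the middle range, $\hat v$ is bounded below by a positive constant $c_\varepsilon = \inf_{[\varepsilon,\mmax-\varepsilon]} \hat v > 0$ (using continuity of $\hat v$ on the compact interval and positivity), so $c_\varepsilon\, \EE_{\delta_x}(M_t^\varepsilon) \le \EE_{\delta_x}[\sum_i \hat v(X_t^i)] = e^{\Lambda t}\hat v(x)$, giving the desired bound for $M_t^\varepsilon$. It remains to control $L_t^\varepsilon$ and $K_t^\varepsilon$.

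For $K_t^\varepsilon$ (individuals with mass close to $\mmax$): I would fix $\varepsilon$ small enough that $\mmax - \varepsilon > \mdiv$, so every individual in $(\mmax-\varepsilon,\mmax]$ divides at positive rate, with both offspring having strictly smaller mass; by Assumption~\ref{da.hypo.model.reduit} and the flow structure, an individual cannot re-enter $(\mmax-\varepsilon,\mmax]$ from below in bounded time without first spending a definite amount of time, and more importantly each such individual divides (at rate bounded below by $\inf_{[\mmax-\varepsilon,\mmax]}b > 0$, modulo the boundary value $b(\mmax)$) within a geometrically-controlled time, feeding its mass back into lower ranges. The cleanest route is to apply Corollary~\ref{da.prop.semi.groupe} but with the flow run for a small fixed time first: since for $x$ near $\mmax$ the flow $A_s(x)$ stays near $\mmax$ only briefly (or, if $g(\mmax)=0$, we instead use that such individuals must have divided recently to be there, their parent being in the middle range), a short-time estimate plus the Markov property reduces $K_t^\varepsilon$ to a multiple of $M_{t-s}^\varepsilon$ for small $s$, hence to $e^{\Lambda t}$ up to constants. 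Alternatively, and perhaps more robustly, one applies Proposition~\ref{da.prop.generateur} with a test function $f$ that is a continuous modification of $\hat v$ made strictly positive on all of $[0,\mmax]$ — but then $f$ is no longer an eigenfunction, so one needs Corollary~\ref{da.vitesse.cv}: if $f$ satisfies $C^-\hat v \le f \le C^+\hat v$ on $(0,\mmax)$ it cannot be made positive at the endpoints, so this does not immediately work either. I expect the genuinely correct argument to go through $L_t^\varepsilon$ first.

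For $L_t^\varepsilon$ (individuals with mass in $[0,\varepsilon)$, where $\hat v$ may vanish at $0$): here I would use that an individual with mass $y < \varepsilon$ is either the initial individual, or was produced by a division, and that the flow $A_s$ pushes masses upward (since $g > 0$ on $(0,\mmax)$), so an individual starting at mass $y < \varepsilon$ reaches $\varepsilon$ in time $t(y,\varepsilon)$, which is bounded above uniformly for $y$ in a compact subset of $(0,\varepsilon)$ but may blow up as $y \to 0$. The key realization is that to bound $\EE_{\delta_x}(L_t^\varepsilon)$ for a \emph{fixed} $x \in (0,\mmax)$, one tracks generations: after finitely many divisions the descendants of $x$ occupy masses that, by the flow, spend most of their time in $[\varepsilon,\mmax-\varepsilon]$, where they are counted by $M^\varepsilon$. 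Concretely, I would show $L_t^\varepsilon \le (\text{number of divisions in }[t-\delta,t]) + (\text{individuals in the middle range at time }t-\delta \text{ that flowed up from below})$, bound the division count by $\bar\taudiv \int_{t-\delta}^t N_u\,\dif u$ — no, that is circular. Instead: the number of individuals \emph{created} in $[0,t]$ with birth-mass in $[0,\varepsilon)$ that are still alive and still in $[0,\varepsilon)$ at time $t$ is at most the number born in $[t-\delta_\varepsilon, t]$ where $\delta_\varepsilon = \sup_{y \in [\varepsilon/2,\varepsilon)} t(y,\varepsilon)$ plus those born below $\varepsilon/2$ and still below $\varepsilon$, and recursively; this produces a sum over scales $\varepsilon, \varepsilon/2, \varepsilon/4,\dots$ Each division event that creates an offspring of mass in $[2^{-k-1}\varepsilon, 2^{-k}\varepsilon)$ has a parent of mass at least $2^{-k-1}\varepsilon > \mdiv$ only for finitely many $k$ (since $b \equiv 0$ below $\mdiv$, divisions producing very small offspring require a parent of mass $> \mdiv$, hence the offspring has mass $\le \mmax$ but the \emph{parent} is in $(\mdiv,\mmax)$, bounded away from the small scales); so only boundedly many scales contribute, reducing everything to counting individuals in the middle range, which is $\le c_\varepsilon^{-1} e^{\Lambda t}\hat v(x)$. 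Summing the three contributions gives $\EE_{\delta_x}(N_t) \le C_x e^{\Lambda t}$. The main obstacle is making the small-mass bookkeeping rigorous, i.e. showing that the number of individuals which have ever had mass below $\varepsilon$ and are currently below $\varepsilon$ is controlled by a bounded multiple of the middle-range count plus a short-memory term — this is where the structural hypothesis $\mdiv < \mmax$ and the strict positivity $g > 0$ on $(0,\mmax)$ must be used carefully.
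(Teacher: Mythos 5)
Your handling of the middle range $M_t^\varepsilon$ is exactly the paper's: positivity and continuity of $\hat v$ on the compact $[\varepsilon,\mmax-\varepsilon]$ give a lower bound $c_\varepsilon>0$, and Corollary~\ref{da.prop.semi.groupe} turns this into $\EE_{\delta_x}(M_t^\varepsilon)\le c_\varepsilon^{-1}e^{\Lambda t}\hat v(x)$. The splitting into three mass ranges is also the right structure. But the two boundary pieces are where the real work lies, and both of your arguments there have genuine gaps.

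For $K_t^\varepsilon$: you essentially acknowledge you don't have a working argument and defer to $L_t^\varepsilon$, but the paper handles $K_t^\varepsilon$ first and the idea is a nontrivial one you are missing. The paper fixes $x_0\in(\mmax/2\wedge x,\,\mmax-\varepsilon)$ and argues \emph{backward in time along lineages}: two distinct individuals with mass $>\mmax-\varepsilon$ at time $t$ cannot share an ancestor until that ancestor was below $\mmax/2$, so each such individual's backward lineage contributes a disjoint time window of length at least $t(x_0,\mmax-\varepsilon)$ during which the ancestor was in $[x_0,\mmax-\varepsilon]\subset(\varepsilon,\mmax-\varepsilon)$ and the death-free "largest-offspring" lineage property $(P)_{s,t}$ holds. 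Averaging over $s$, this yields $K_t^\varepsilon\le \frac{1}{t(x_0,\mmax-\varepsilon)}\int_0^t\sum_i \indic_{\{X_s^i\in(\varepsilon,\mmax-\varepsilon),\,(P)_{s,t}\}}\dif s$; taking expectations, using $\PP(\text{the ancestor satisfies }(P)_{0,t-s})=e^{-D(t-s)}$ and the already-proved bound on $\EE(M_s^\varepsilon)$ gives $\EE(K_t^\varepsilon)\le\frac{C_\varepsilon\hat v(x)}{t(x_0,\mmax-\varepsilon)(\Lambda+D)}e^{\Lambda t}$, crucially via $\Lambda+D>0$. Your "short-time estimate plus Markov property" intuition is in the neighborhood, but without the disjointness of the backward windows the reduction of $K_t^\varepsilon$ to the middle-range count is not a counting argument at all — you would just be bounding one random variable by another with no control on multiplicity.

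For $L_t^\varepsilon$: your dyadic-scales bookkeeping does not close. The claim that "only boundedly many scales contribute" because a parent must have mass $>\mdiv$ is false in the relevant sense: an individual of small mass flows up, eventually exceeds $\mdiv$, divides, and its offspring with the small proportion $\alpha$ lands at an even smaller mass. Over time, lineages populate arbitrarily small dyadic scales, and your recursion does not terminate. The paper's argument is entirely different and much cleaner: since $b\in C[0,\mmax]$ with $b(0)=0$ and $\Lambda+D>0$, one can shrink $\varepsilon$ so that $b_0:=\max_{[0,\varepsilon]}b<\Lambda+D$. Then $L_t^\varepsilon$ is stochastically dominated by a birth–death process with birth rate $b_0$, death rate $D$, and immigration rate $2\bar b(K_t^\varepsilon+M_t^\varepsilon)$ (each division of an individual in the outer ranges can deposit at most two offspring into $[0,\varepsilon)$). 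Solving the linear ODE for $\EE(\bar L_t^\varepsilon)$ with source $O(e^{\Lambda t})$ and intrinsic rate $b_0-D<\Lambda$ yields $\EE(L_t^\varepsilon)\le C\hat v(x)e^{\Lambda t}$. The two structural inputs you would need to find are precisely this subcriticality of the small-mass compartment relative to $\Lambda$ (via $b(0)=0$, $\Lambda+D>0$, and shrinking $\varepsilon$) and the domination of $L_t^\varepsilon$ by an immigration-fed branching process whose source is already controlled.
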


\begin{proof}
Let $\varepsilon>0$ be such as $\varepsilon < x <\mmax-\varepsilon$. 
Since $\hat v$ is continuous, it follows from Proposition \ref{da.prop.hat_v.positive}, that there exists $C_\varepsilon>0$ such that
\[
	1 
	\leq 
	C_\varepsilon \, \hat v(y)\, , \quad \forall y\in[\varepsilon, \mmax-\varepsilon ] \,.
\]

From Corollary \ref{da.prop.semi.groupe},
\begin{align}
\label{ineq.Mt.varepsilon}
	\EE_{\delta_{x}}(M_t^\varepsilon) 
		=
			\EE_{\delta_{x}} \left[ 
					\sum_{i=1}^{N_t} 1_{\{[\varepsilon, M-\varepsilon]\}}(X_t^i)
				   \right]
		\leq 
			C_\varepsilon \, 
			\EE_{\delta_{x}} \left[ 
					\sum_{i=1}^{N_t} \hat v \left(X_t^i \right)
				   \right]
		= 
			C_\varepsilon \, e^{\Lambda \, t} \, \hat v(x) \, .
\end{align}

Let $x_0 \in (\mmax/2 \wedge x,\, \mmax-\varepsilon)$.
Backward in time, as long as one of the ancestors of two different individuals with mass larger than $\mmax-\varepsilon$ at time $t$ have not reached mass $\mmax/2$, their lineages cannot coalesce. Hence, for each of the $K_t^\varepsilon$ individuals with mass larger than $\mmax-\varepsilon$ at time $t$, following its lineage backward in time, there is necessarily a time interval of length larger than $t(x_0,\mmax-\varepsilon)$ during with the ancestor has mass in $[x_0,\mmax-\varepsilon]$ and satisfies the following property $(P)_{s,t}$ : an individual $i$ at time $s\leq t$ satisfies property $(P)_{s,t}$ if it survives before dividing until time $t$ or it divides at some time $s \leq s' \leq t$ and its child with larger mass satisfies the property $(P)_{s',t}$.
In other words, there is no death before time $t$ in the lineage formed where we follow only the offspring with larger mass at each division. 
See Figure \ref{fig.larger.lineage}.

\begin{figure}
\begin{center}
\includegraphics[width=11cm]{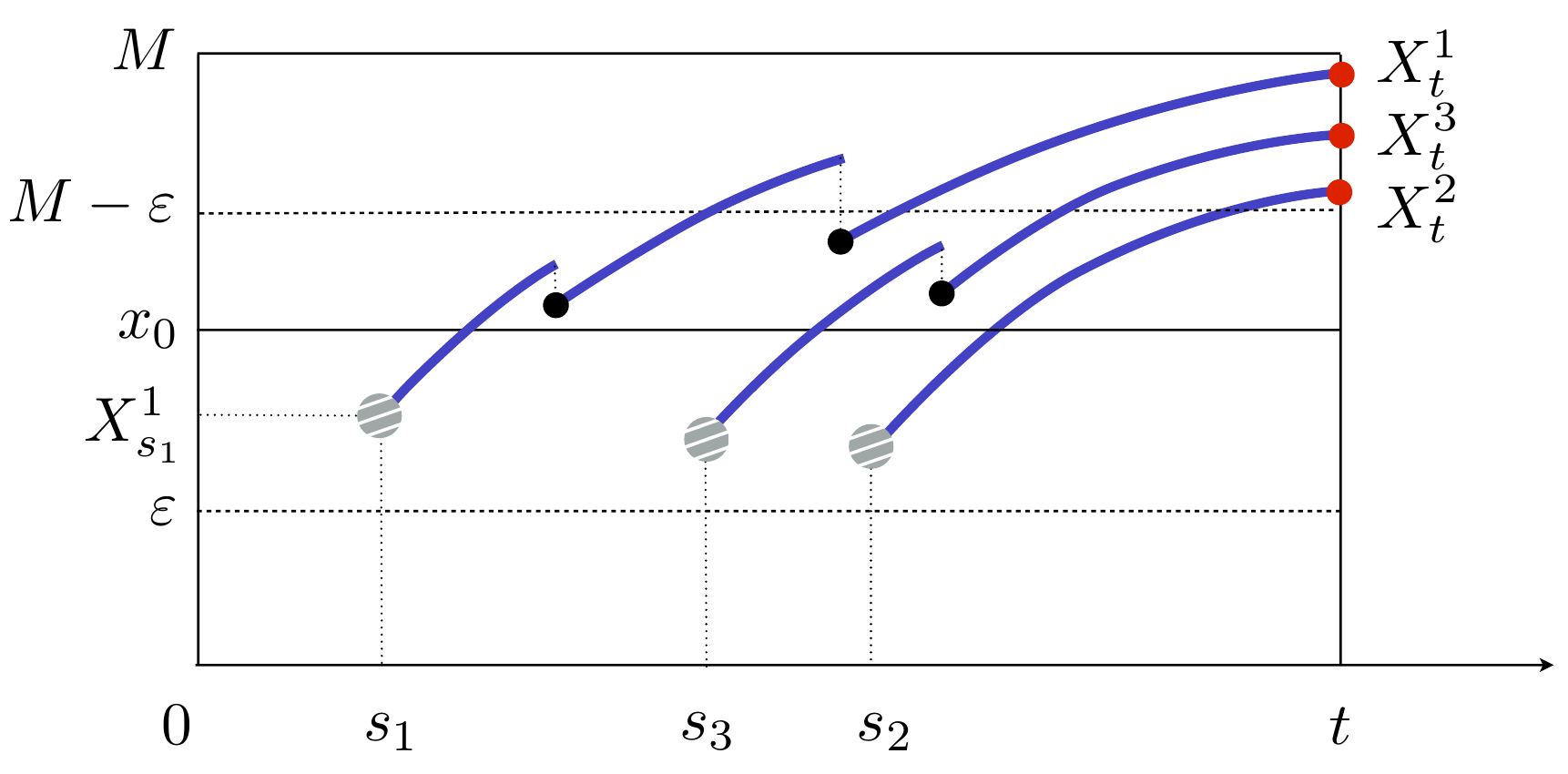}
\end{center}
\caption{\label{fig.larger.lineage}Representation of the backward lineages for individuals in $(\mmax-\varepsilon,\mmax]$ at time $t$, until they reach a mass smaller than $x_0$. Note that the three hatched individuals satisfy property $(P)_{s_i,t}$ for $i=1,2,3$.}
\end{figure}

Then
\begin{align*}
K_t^\varepsilon
	\leq &
	\frac{1}{t(x_0,\mmax-\varepsilon)}\,
	 \sum_{i=1}^{K_t^\varepsilon}
	 	\int_0^t
			1_{\{
				(\hat X_s^i \in (\varepsilon, M-\varepsilon))
				\text{ and $\hat X_s^i$ satisfies $(P)_{s,t}$}	
			 \}}
		\,
		\dif s\,,
\end{align*}
where $\hat X_s^i$ is the mass of the ancestor at time $s$ of the $i$-th individual with mass larger than $\mmax-\varepsilon$ at time $t$. In particular,
\begin{align*}
K_t^\varepsilon
	\leq &
	\frac{1}{t(x_0,\mmax-\varepsilon)}\,
	 \int_0^t
	 	\sum_{i=1}^{N_s}
			1_{\{
				(X_s^i \in (\varepsilon, M-\varepsilon))
				\text{ and $X_s^i$ satisfies $(P)_{s,t}$}	
			 \}}
		\,
		\dif s\,.
\end{align*}
We deduce from the Markov property that
\begin{align*}
\EE_{\delta_{x}}(K_t^\varepsilon) 
	&\leq 
	 	\frac{1}{t(x_0,\mmax-\varepsilon)}\,
		\int_0^t \EE_{\delta_{x}}
			\Big(\sum_{i=1}^{N_s}
			1_{\{
				(X_s^i \in (\varepsilon, M-\varepsilon))	
			 \}}\,
			 \PP_{X_s^i}\left(\text{the ancestor satisfies } (P)_{0,t-s} \right)
			 \Big)
			  \, \dif s
\\
	&=
	 	\frac{1}{t(x_0,\mmax-\varepsilon)}\,
		\int_0^t
		 	e^{-D(t-s)} \,
			\EE_{\delta_{x}}
			\Big(\sum_{i=1}^{N_s}
			1_{\{
				(X_s^i \in (\varepsilon, M-\varepsilon))	
			 \}}
			 \Big)
			  \, \dif s
\\
	& \leq	
		\frac{1}{t(x_0,\mmax-\varepsilon)}\,C_\varepsilon \, \hat v(x)
		\int_0^t
			e^{\Lambda \, s} \, e^{-D(t-s)}\, \dif s
\\
	&\leq	
		\frac{C_\varepsilon}{t(x_0,\mmax-\varepsilon)\,(\Lambda+D)}\, \, \hat v(x)
		e^{\Lambda\,t} 
\end{align*}
where the second inequality follows from \eqref{ineq.Mt.varepsilon} and the last one from the fact that $\Lambda+D>0$.

From Assumptions \ref{da.hypo.model.reduit}, $\taudiv$ is a continuous function such that $\taudiv(0)=0$. Furthermore $\Lambda+D>0$. Reducing $\varepsilon$ if necessary, we can then assume that $\taudiv_0 \eqdef \max_{0\leq x\leq \varepsilon} \taudiv(x) < \Lambda+D$. 
Then $L_t^\varepsilon$ is dominated by a branching process with birth rate $\taudiv_0$, death rate $D$ and inhomogeneous immigration rate $2\,\bar \taudiv\,(K_t^\varepsilon+M_t^\varepsilon)$.

Let $\bar L_t^\varepsilon$ be a such process with initial condition $\bar L_0^\varepsilon=0$, then
\begin{align*}
\frac{\dif}{\dif t} \EE_{\delta_{x}}(\bar L_t^\varepsilon)
	& =
		(\taudiv_0-D)\, \EE_{\delta_{x}}(\bar L_t^\varepsilon)
		+
		2 \,\bar \taudiv \,\EE_{\delta_{x}}(K_t^\varepsilon+M_t^\varepsilon)\,.
\end{align*}
Hence 
\begin{align*}
\EE_{\delta_{x}}(\bar L_t^\varepsilon)
	& =	
		e^{(\taudiv_0-D)\,t} \,		
		2 \,\bar \taudiv \int_0^t \EE_{\delta_{x}}(K_s^\varepsilon+M_s^\varepsilon)\,
		e^{(D-\taudiv_0)\,s}\, \dif s
\\
	& \leq
		 e^{(\taudiv_0-D)\,t} \,		
		2 \,\bar \taudiv \, C\,\hat v(x) \, \int_0^t e^{(\Lambda +D-\taudiv_0)\,s}\, \dif s
\\
	& \leq
		e^{(\taudiv_0-D)\,t} \,		
		2 \,\bar \taudiv \, C\, \hat v(x) \,\frac{e^{(\Lambda +D-\taudiv_0)\,t}-1}{\Lambda +D-\taudiv_0} \,.
\end{align*}

Finally we then get
\begin{align*}
\EE_{\delta_{x}}(N_t)
	&=
		\EE_{\delta_{x}}(K_t^\varepsilon+M_t^\varepsilon+L_t^\varepsilon)
	\leq C \, e^{\Lambda \, t}\,\hat v(x) \,.
\qedhere
\end{align*}
\end{proof}

\subsection{Invasion fitness}
\label{subsec.invasion.fitness}

This section is dedicated to the proof of the following theorem, which links the two critera of survival possibility.

\begin{theorem}
\label{th.critere}
\begin{enumerate}
\item \label{surcritique} If $\Lambda>0$ then $\PP_{\delta_{x}}(\text{survival})>0$ for any $x\in(0,\mmax)$. 
\item \label{sous-critique} If $\Lambda\leq 0$ then $\PP_{\delta_{x}}(\text{survival})=0$ for any $x\in[0,\mmax]$.
\item \label{evt.survie} Under the probability $\PP_{\delta_x}$, the following equality holds:
$$
	\{\text{survival} \}
	=
	\{\mathcal{Z}>0\} \quad \text{a.s.}
$$
where $\mathcal{Z}$ is an integrable random variable such that
\begin{align}
\label{limit.Z}
 \crochet{ e^{-\Lambda \, t}\, \eta_t, \, \hat v} 
 \xrightarrow[t\to\infty]{} \mathcal{Z}\ \textrm{ $\PP_{\delta_{x}}$-a.s.}
\end{align}
\end{enumerate}
\end{theorem}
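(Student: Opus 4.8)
The central object is the nonnegative martingale $Z_t \eqdef \crochet{e^{-\Lambda t}\eta_t,\hat v} = e^{-\Lambda t}\sum_{i=1}^{N_t}\hat v(X_t^i)$; it is a martingale by Proposition~\ref{da.prop.generateur} applied to $v(t,x) = e^{\Lambda t}\hat v(x)$ (which solves \eqref{da.eq.evolution} and is bounded on every strip since $\hat v$ is continuous on $[0,\mmax]$), and by Corollary~\ref{da.prop.semi.groupe} it has constant expectation $\hat v(x)$. Being nonnegative, it converges $\PP_{\delta_x}$-a.s.\ to an integrable limit $\mathcal{Z}$, which proves \eqref{limit.Z} and establishes part~\ref{evt.survie}'s convergence claim. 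The first task is then to show $\PP_{\delta_x}$-a.s.\ $\{\text{survival}\} \supseteq \{\mathcal{Z}>0\}$ (immediate: on extinction $\eta_t = 0$ for $t$ large, so $Z_t = 0$ eventually, hence $\mathcal{Z}=0$) and, in the supercritical case, the reverse inclusion on the survival event.

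For part~\ref{sous-critique} (the subcritical/critical case $\Lambda\leq 0$), I would argue that survival forces $\mathcal{Z}>0$, contradicting $\EE_{\delta_x}(N_t)$ being bounded. Concretely: if $\Lambda<0$, Proposition~\ref{majoration.esp.Nt} gives $\EE_{\delta_x}(N_t)\leq C_x e^{\Lambda t}\to 0$, so $N_t\to 0$ in probability; since $N_t$ is integer-valued and $\{N_t=0\}$ is increasing in $t$ up to a null set of re-extinctions — more carefully, one uses that $\{$survival$\}$ has the lineages never all dying, and on this event $N_t\geq 1$ for all $t$, forcing $\EE_{\delta_x}(N_t)\geq \PP_{\delta_x}(\text{survival})$ for all $t$, whence $\PP_{\delta_x}(\text{survival})=0$. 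For $\Lambda=0$, $\EE_{\delta_x}(N_t)\leq C_x$ is bounded, so $(N_t)$ is a nonnegative process whose expectation stays bounded; combined with the fact that on survival the generation number tends to infinity and each surviving lineage must keep producing new individuals, a Fatou/uniform-integrability argument shows survival probability zero. I would in fact handle $\Lambda\le 0$ uniformly: on survival, $\liminf_t N_t\ge 1$, so $Z_t\ge e^{-\Lambda t}\min_{[\varepsilon,M-\varepsilon]}\hat v$ cannot stay bounded unless the surviving individuals concentrate near $0$ or $M$; ruling that out requires the structure of the flow (an individual near $0$ grows away from $0$, one near $M$ will divide because $b(x)>0$ there when $M>\mdiv$), so with positive probability a descendant sits in $[\varepsilon,M-\varepsilon]$, forcing $\limsup_t Z_t = \infty$ on a positive-probability sub-event of survival, contradicting a.s.\ convergence of $Z_t$. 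Hence survival is null.

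For part~\ref{surcritique} ($\Lambda>0$), the plan is to show $\PP_{\delta_x}(\mathcal{Z}>0)>0$, which by the inclusion above gives $\PP_{\delta_x}(\text{survival})>0$. The natural route is an $L^2$ bound: I would try to prove that $(Z_t)$ is bounded in $L^2$, using the semimartingale decomposition of $Z_t$ from Proposition~\ref{da.prop.semimartingale} to write $\EE_{\delta_x}(Z_t^2)$ via the predictable quadratic variation, and then control $\int_0^t\EE_{\delta_x}\langle\eta_u,\hat v^2\rangle e^{-2\Lambda u}\,\dif u$ using Proposition~\ref{majoration.esp.Nt} (which gives $\EE_{\delta_x}\langle\eta_u,1\rangle\leq C_x e^{\Lambda u}$ and hence, $\hat v$ being bounded, $\EE_{\delta_x}\langle\eta_u,\hat v^2\rangle\le C e^{\Lambda u}$, whose time-integral against $e^{-2\Lambda u}$ converges because $\Lambda>0$). $L^2$-boundedness gives uniform integrability, so $Z_t\to\mathcal{Z}$ in $L^1$ as well and $\EE_{\delta_x}(\mathcal{Z}) = \hat v(x)>0$, forcing $\PP_{\delta_x}(\mathcal{Z}>0)>0$. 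Finally, for the full a.s.\ identity $\{\text{survival}\}=\{\mathcal{Z}>0\}$ in part~\ref{evt.survie} I would use a branching/$0$-$1$ argument: decomposing the population into the independent lineages issued at the first division (as in the proof of Proposition~\ref{da.prop.proba.extinction}), $\PP_{\delta_x}(\mathcal{Z}=0)$ satisfies the same fixed-point equation \eqref{da.eq.proba.extinction} as $p(x)$, and one checks $\PP_{\delta_x}(\mathcal{Z}=0)$ is a nonnegative solution; since $p$ is the \emph{minimal} one, $p(x)\le \PP_{\delta_x}(\mathcal{Z}=0)$, while the trivial inclusion gives $\PP_{\delta_x}(\mathcal{Z}=0)\le \PP_{\delta_x}(\text{extinction}) = p(x)$, so they coincide, i.e.\ $\{\mathcal{Z}>0\}=\{\text{survival}\}$ a.s.

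The main obstacle I anticipate is the $L^2$ (or uniform-integrability) control: making the quadratic-variation computation rigorous requires knowing $M_t^{1,F,f}$, $M_t^{2,F,f}$ are genuine $L^2$-martingales, which needs the moment bounds of Lemma~\ref{da.lem.taille.pop} together with Proposition~\ref{majoration.esp.Nt}, and one must be careful that $\hat v$ may vanish at the endpoints $0$ and $M$ (so $\hat v$ is bounded but $1/\hat v$ is not, which is exactly why Proposition~\ref{majoration.esp.Nt} is needed rather than a naive comparison). A secondary delicate point is the $\Lambda=0$ case in part~\ref{sous-critique}, where the martingale $Z_t$ converges but we must still exclude survival; there the key is that on survival the number of \emph{divisions} is infinite and each division of an individual in $[\varepsilon,M-\varepsilon]$ contributes a fixed positive amount to $Z_t$ that is not compensated (since $\Lambda=0$ means no exponential discounting), again contradicting a.s.\ finiteness of $\lim_t Z_t$ — formalizing "infinitely many such divisions on survival" via the fact that masses cannot avoid $[\varepsilon,M-\varepsilon]$ forever is the technical crux.
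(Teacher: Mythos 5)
Your skeleton is sound in several places: the martingale $Z_t=\crochet{e^{-\Lambda t}\eta_t,\hat v}$ and its a.s.\ convergence, the trivial inclusion $\{\mathcal{Z}>0\}\subseteq\{\text{survival}\}$, the $L^2$-boundedness via Proposition~\ref{majoration.esp.Nt} for the supercritical case (the paper does this through a discrete-time variance recursion $\sigma_{n+1}(x)\leq C_{\delta,x}e^{n\delta\Lambda}+e^{2\Lambda\delta}\sigma_n(x)$ rather than a continuous quadratic-variation computation, but both routes hinge on the same $\EE_{\delta_x}(N_t)\leq C_x e^{\Lambda t}$ bound), and the direct consequence of Proposition~\ref{majoration.esp.Nt} when $\Lambda<0$. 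However, two of your steps have genuine gaps.

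\textbf{The critical case $\Lambda=0$.} Your claim that a descendant visiting $[\varepsilon,\mmax-\varepsilon]$ forces $\limsup_t Z_t=\infty$ is false: one individual in that strip contributes at most $\max_{[0,\mmax]}\hat v<\infty$ to $Z_t$, so $Z_t$ stays perfectly bounded. What the integrability of $\mathcal{Z}$ does rule out is $\limsup_t M_t^\varepsilon=\infty$; combined with a stopping-time argument showing that $M_t^\varepsilon$ either escapes to $\infty$ or goes to $0$, one gets $M_t^\varepsilon\to 0$ a.s.\ (paper's Step~3). The hard part your sketch does not address is that $M_t^\varepsilon\to 0$ does not immediately yield extinction: the population could a priori accumulate in $(\mmax-\varepsilon,\mmax]$, where $\hat v$ may vanish (Remark~\ref{remark.hat.v}), making this accumulation invisible in $Z_t$. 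The paper rules this out with a dedicated lineage argument for $K_t^\varepsilon$: each individual near $\mmax$ at time $t$ has an ancestor who once had mass exactly $x_0$ and satisfied the ``largest-offspring alive'' property $(P)_{s,t}$, the indicator of which decays like $e^{-D(t-s)}$; since $M_t^\varepsilon\to 0$ only finitely many ancestors ever had mass $x_0$, hence $K_t^\varepsilon\to 0$, and then $L_t^\varepsilon\to 0$ by comparison with a subcritical branching process with immigration. This is the technical crux of the whole proof, and it is missing from your proposal.

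\textbf{Part~\ref{evt.survie} for $\Lambda>0$.} Your fixed-point argument has an inequality in the wrong direction. The trivial inclusion is $\{\text{extinction}\}\subseteq\{\mathcal{Z}=0\}$, which gives $p(x)\leq\PP_{\delta_x}(\mathcal{Z}=0)$ --- the same direction that minimality of $p$ already provides, not the reverse. You wrote $\PP_{\delta_x}(\mathcal{Z}=0)\leq\PP_{\delta_x}(\text{extinction})$, but that bound is backwards. The missing inequality $\PP_{\delta_x}(\mathcal{Z}=0)\leq p(x)$ is exactly what has to be proved. In the finite-type Galton--Watson setting one closes this gap by observing that the offspring generating function has exactly two fixed points, $q$ and $1$, but here the fixed-point equation \eqref{da.eq.proba.extinction} lives in a function space and such a dichotomy would need its own nontrivial argument. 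The paper avoids it entirely: it shows (using the same $M_t^\varepsilon$ stopping-time estimate as in the $\Lambda=0$ analysis, applied also for $\Lambda>0$) that on survival, $M_t^\varepsilon$ exceeds any level $A$ at some finite time $T_A$; applying the strong Markov property at $T_A$ and using that each of the $\geq A$ individuals in $[\varepsilon,\mmax-\varepsilon]$ independently founds a lineage that survives with probability $\geq 1-\gamma$ for $\gamma<1$, one gets $\PP_{\delta_x}(\mathcal{Z}>0)\geq(1-\gamma^A)\PP_{\delta_x}(\text{survival})$, and letting $A\to\infty$ finishes the proof.
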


\begin{remark}
The case $\Lambda>0$ (resp. $\Lambda=0$, $\Lambda<0$) is the super-critical case (resp. critical, sub-critique case) of the process $(\eta_t)_{t\geq 0}$ (see for example \citet{englander2004}).
\end{remark}

\begin{lemma}
\label{lemme.martingale.bornee}
For any $x \in (0,\mmax)$, $(\crochet{e^{-\Lambda \, t}\, \eta_t, \, \hat v})_{t\geq 0}$ is a $\PP_{\delta_{x}}$-martingale. Moreover if $\Lambda>0$ then it is bounded in $L^2(\Omega)$.
\end{lemma}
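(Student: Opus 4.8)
The plan is to prove the martingale property directly from the semimartingale decomposition of Proposition~\ref{da.prop.semimartingale}, applied with $F=\mathrm{id}$ and the time-dependent test function $f_t(x)=e^{-\Lambda t}\hat v(x)$. With this choice, $\crochet{e^{-\Lambda t}\eta_t,\hat v}=\crochet{\eta_t,f_t}$, and the finite-variation (``drift'') part of the decomposition is
$$
\int_0^t \crochet{\eta_u,\, g\,\partial_x f_u+\partial_u f_u}\,\dif u
+\int_0^t\int_0^\mmax b(x)\int_0^1\bigl[f_u(\alpha x)+f_u((1-\alpha)x)-f_u(x)\bigr]q(\alpha)\,\dif\alpha\,\eta_u(\dif x)\,\dif u
-D\int_0^t\crochet{\eta_u,f_u}\,\dif u.
$$
Using $\partial_u f_u(x)=-\Lambda e^{-\Lambda u}\hat v(x)$ and factoring out $e^{-\Lambda u}$, the integrand of the $\dif u$-integral is exactly $e^{-\Lambda u}\crochet{\eta_u,\,\op^*\hat v-\Lambda\hat v}$ (recalling the pointwise identity $\L\crochet{\eta,f}=\sum_i\op^* f(x_i)$ noted in the excerpt), which vanishes since $\op^*\hat v=\Lambda\hat v$ by \eqref{eigenequation.dual}. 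Hence the drift term is identically zero and $\crochet{e^{-\Lambda t}\eta_t,\hat v}=\hat v(x)+M_t^{1,\mathrm{id},f}+M_t^{2,\mathrm{id},f}$.

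It then remains to check that $M^{1,\mathrm{id},f}$ and $M^{2,\mathrm{id},f}$ are genuine martingales, i.e.\ that the integrability hypotheses of Proposition~\ref{da.prop.martingales} hold. Since $\hat v\in C[0,M]$ is bounded by some constant $\|\hat v\|_\infty$ (Assumption~\ref{existence.eigenelements}) and $e^{-\Lambda u}\le 1\vee e^{-\Lambda t}$ on $[0,t]$, each integrand in the two conditions of Proposition~\ref{da.prop.martingales} is bounded (in absolute value) by a constant times $\crochet{\eta_u,1}=N_u$ (for the death term, trivially; for the division term, by $3\,b(x)\le 3\bar b$ times $\|\hat v\|_\infty$ times the scaling factor). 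Thus both expectations are bounded by $C_t\,\EE_{\delta_x}\bigl(\int_0^t N_u\,\dif u\bigr)\le C_t\,t\,\EE_{\delta_x}(\sup_{u\le t}N_u)<\infty$ by Lemma~\ref{da.lem.taille.pop}. This gives the martingale property on $[0,t]$ for every $t$, hence globally.

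For the $L^2$-boundedness when $\Lambda>0$, I would compute the predictable quadratic variation of the two (orthogonal, since $\MP_1$ and $\MP_2$ are independent) jump martingales from their explicit Poisson-integral form: $\langle M^{1}\rangle_t$ and $\langle M^{2}\rangle_t$ are $\dif u$-integrals against the intensity measures $n_1,n_2$ of the squared jump sizes. Each jump size is of the form $e^{-\Lambda u}[\hat v(\alpha X)+\hat v((1-\alpha)X)-\hat v(X)]$ or $-e^{-\Lambda u}\hat v(X)$, hence bounded in absolute value by $3\|\hat v\|_\infty e^{-\Lambda u}$, so
$$
\EE_{\delta_x}\bigl(\langle M^1\rangle_t+\langle M^2\rangle_t\bigr)
\le C\int_0^t e^{-2\Lambda u}\,\EE_{\delta_x}(N_u)\,\dif u
\le C\,C_x\int_0^t e^{-\Lambda u}\,\dif u
\le \frac{C\,C_x}{\Lambda},
$$
uniformly in $t$, where the middle inequality is precisely Proposition~\ref{majoration.esp.Nt}. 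By the Itô isometry $\EE_{\delta_x}\bigl((M_t^{1}+M_t^{2})^2\bigr)=\EE_{\delta_x}(\langle M^1\rangle_t+\langle M^2\rangle_t)$, which is bounded, and since $\crochet{e^{-\Lambda t}\eta_t,\hat v}=\hat v(x)+M_t^1+M_t^2$, the martingale is bounded in $L^2(\Omega)$. The main obstacle is really the bookkeeping in verifying that $e^{-\Lambda u}\crochet{\eta_u,\op^*\hat v-\Lambda\hat v}$ is exactly the drift term (one must be careful that the compensated-measure decomposition in Proposition~\ref{da.prop.semimartingale} is used with the correct indicator $1_{\{\theta\le b(X^j_{\umoins})/\bar b\}}$ so that the effective division rate is $b(\cdot)$ rather than $\bar b$); everything else is a direct application of Proposition~\ref{majoration.esp.Nt} and Lemma~\ref{da.lem.taille.pop}.
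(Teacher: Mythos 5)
Your proposal is correct, but it takes a genuinely different route from the paper's proof on both halves of the statement. For the martingale property, you derive it directly from the semimartingale decomposition of Proposition~\ref{da.prop.semimartingale}, showing the drift term is $e^{-\Lambda u}\crochet{\eta_u,\op^*\hat v-\Lambda\hat v}=0$ by the eigenvector identity (using the symmetry of $q$ to convert $\int q(\alpha)[\hat v(\alpha x)+\hat v((1-\alpha)x)]\dif\alpha$ into $2\int q(\alpha)\hat v(\alpha x)\dif\alpha$). The paper instead invokes Corollary~\ref{da.prop.semi.groupe} together with the Markov property and branching independence of the lineages to verify the conditional-expectation identity directly — cleaner once Corollary~\ref{da.prop.semi.groupe} is in hand, whereas yours re-derives the same fact from first principles. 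For the $L^2$ bound with $\Lambda>0$, your argument via the explicit predictable quadratic variation of the two orthogonal compensated Poisson integrals and the It\^o isometry, yielding $\EE(\langle M^1\rangle_t+\langle M^2\rangle_t)\leq C\int_0^t e^{-2\Lambda u}\,\EE(N_u)\,\dif u\leq C C_x/\Lambda$ uniformly in $t$, is more direct than the paper's approach, which discretizes time, sets $\sigma_n(x)=\var_{\delta_x}(\sum_i\hat v(X^i_{n\delta}))$, uses the variance decomposition formula and the branching/Markov structure to obtain the recursion $\sigma_{n+1}\leq C_{\delta,x}e^{n\delta\Lambda}+e^{2\Lambda\delta}\sigma_n$, and solves it. Both routes hinge on the same key input, Proposition~\ref{majoration.esp.Nt}, which gives $\EE_{\delta_x}(N_t)\leq C_x e^{\Lambda t}$; yours uses it inside the $\dif u$-integral, the paper's inside the recursion. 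One small technical point worth a sentence if you were writing this up for real: $\hat v$ is only assumed $C^1$ on the open interval $(0,\mmax)$ while Proposition~\ref{da.prop.semimartingale} is stated for $f\in C^{1,1}(\RR_+\times[0,\mmax])$; this is harmless because the masses never reach $0$ or $\mmax$, but the paper sidesteps the issue by not passing through the semimartingale decomposition in this lemma.
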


\begin{proof}
The function $\hat v$ is in $C[0,\mmax]$, thus it is bounded by a constant $C_{\hat v}>0$. Therefore, for any $t \geq 0$,
$$
	\EE_{\delta_{x}}(\crochet{e^{-\Lambda \, t}\, \eta_t, \, \hat v})
	\leq C_{\hat v} \, e^{-\Lambda \, t}\, \EE_{\delta_{x}}(N_t)\,,
$$
which is finite according to Lemma \ref{da.lem.taille.pop}, thus $\crochet{e^{-\Lambda \, t}\, \eta_t, \, \hat v}$ is integrable.

Let $s>0$,
\begin{align*}
\EE_{\delta_{x}}\Bigl(
		\crochet{ e^{-\Lambda \, (t+s)}\, \eta_{t+s}, \, \hat v}  
		\, \Big| \, \eta_t \Bigr)
	&= 
		\EE_{\delta_{x}}\Bigl( 
				e^{-\Lambda\,(t+s)} \, 
				\sum_{i=1}^{N_{t+s}} \hat v(X_{t+s}^i) \, \Big| \, \eta_t
				\Bigl)
\\
	&= 
		e^{-\Lambda\,(t+s)} \, \sum_{j=1}^{N_t} \,
		\EE_{\delta_{x}}\Bigl(  
			\sum_{i=1}^{N_{t+s}^j} \hat v(X_{t+s}^{i,j}) \, \Big| \, \eta_t
		\Bigr)
\end{align*}
where $\{X_{t+s}^{i,j}, i=1,\dots,N_{t+s}^j\}$ is the set of descendants, at time $t+s$, of the individual $X_t^j$.

From the Markov property, the independence of the lineages of $X_t^1,\dots,X_t^{N_t}$ and from  Corollary \ref{da.prop.semi.groupe},
\begin{align*}
  \EE_{\delta_{x}}\Bigl(
		\crochet{e^{-\Lambda \, (t+s)}\, \eta_{t+s}, \, \hat v} 
		\, \Big| \, \eta_t \Bigr)
	&= 
		e^{-\Lambda\,(t+s)} \, \sum_{j=1}^{N_t} \,
				\EE_{\delta_{X_t^j}}\Bigl(  
						\sum_{i=1}^{N_{s}^j} \hat v(X_{s}^{i,j})
						\Bigr)
\\
	&= 
		e^{-\Lambda\,(t+s)} \, \sum_{j=1}^{N_t} e^{\Lambda\,s}\,\hat v(X_t^j)			
\\
	&= 	
		\crochet{e^{-\Lambda \, t}\, \eta_t, \, \hat v} \,.
\end{align*}
Then $\crochet{e^{-\Lambda \, t}\, \eta_t, \, \hat v}$ is a martingale.
\medskip

Let $\delta>0$, we set for $n\geq 0$
\begin{align*}
  \sigma_n(x)
  &\eqdef
  \var_{\delta_{x}} \left( \sum_{i=1}^{N_{n \, \delta}} \hat v(X_{n\,\delta}^i)\right)
  \,.
\end{align*}
From the variance decomposition formula,
\begin{align*}
  \sigma_{n+1}(x)
  &=
  \EE_{\delta_{x}}\Biggl\{
     \var_{\delta_{x}}\biggl(
        \sum_{i=1}^{N_{(n+1)\,\delta}} \hat v(X_{(n+1)\,\delta}^i)
        \, \bigg|
        \, \eta_{n\, \delta}
     \biggr)
  \Biggr\}
\\
  &\qquad\qquad\qquad\qquad
  +
  \var_{\delta_{x}} \Biggl\{
    \EE_{\delta_{x}} \biggl(
      \sum_{i=1}^{N_{(n+1)\,\delta}} \hat v(X_{(n+1)\,\delta}^i) 
      \, \bigg|
      \, \eta_{n\,\delta}
    \Biggr)
  \Biggr\}\,.
\end{align*}
Given that $\crochet{ e^{-\Lambda \, t}\, \eta_t, \, \hat v}$ is a martingale, 
\begin{align*}
  \var_{\delta_{x}} \Biggl\{
    \EE_{\delta_{x}} \biggl(
			\sum_{i=1}^{N_{(n+1)\,\delta}}
			\hat v(X_{(n+1)\,\delta}^i) 
			\, \bigg| 
			\, \eta_{n\,\delta}
    \biggr)
  \Biggr\}
  &=
  \var_{\delta_{x}} \biggl\{
    e^{\Lambda\,\delta} \sum_{i=1}^{N_{n\,\delta}} 
    \hat v(X_{n\,\delta}^i)
  \biggr\}
=
  e^{2 \Lambda\,\delta} \, \sigma_n(x)\,.
\end{align*}
From the independence of the lineages of $X_t^1,\dots,X_t^{N_t}$ and the Markov property,
\begin{align*}
  \var_{\delta_{x}} \biggl(
    \sum_{i=1}^{N_{t+s}} \hat v(X_{t+s}^i) 
    \, \bigg| 
    \, \eta_t
  \biggr)
  &=
  \var_{\delta_{x}} \biggl(
    \sum_{j=1}^{N_{t}}\sum_{i=1}^{N_{t+s}^j} 
			\hat v(X_{t+s}^{i,j}) 
	\, \bigg| 
	\, \eta_t
	\biggr)
\\
  &=
  \sum_{j=1}^{N_t} \var_{\delta_{X_t^j}} 
    \biggl(\sum_{i=1}^{N_s} \hat v(X_s^i) \biggr)\,.
\end{align*}
Using the upper bound on $\hat v$,
\begin{align*}
  \var_{\delta_{x}} \biggl(
    \sum_{i=1}^{N_{t+s}} \hat v(X_{t+s}^i) \, \bigg| \, \eta_t
  \biggr)
  \leq
  C_{\hat v}\, \sum_{i=1}^{N_t} 
			\EE_{\delta_{X_t^i}}	(N_s^2)\,.
\end{align*}
From Lemma \ref{da.lem.taille.pop}, there exists $C_s>0$ such as:
\begin{align*}
  \EE_{\delta_{X_t^i}}	( N_s^2)  < C_s\,.
\end{align*}
Then
\begin{align*}
  \var_{\delta_{x}} \biggl(
     \sum_{i=1}^{N_{t+s}} \hat v(X_{t+s}^i) \, \bigg| \, \eta_t\biggr)
  \leq
  C_{\hat v}\, C_s \, N_t\,.
\end{align*}
We deduce by induction that
\begin{align*}
\sigma_{n+1}(x)
	&\leq
		C_{\hat v}\, C_\delta \,  \EE_{\delta_{x}}(N_{n\,\delta})
		+ e^{2 \Lambda\,\delta} \, \sigma_n(x)\,.
\end{align*}
It follows from Proposition \ref{majoration.esp.Nt} that $\EE_{\delta_{x}}(N_{n\,\delta}) \leq C_x\, e^{\Lambda \,n\,\delta}$ and thus
\begin{align*}
\sigma_{n+1}(x)
	\leq	
		C_{\delta,x}\,e^{n\,\delta\,\Lambda} + e^{2\,\Lambda\,\delta}\, \sigma_n(x)\, .
\end{align*}
We easily deduce by induction that
\begin{align*}
\sigma_n(x)
	\leq	
		C_{\delta,x}\,\omega^{n-1}\,\frac{\omega^n-1}{\omega-1}
\end{align*}
where $\omega=e^{\Lambda\,\delta}$.
Then
\begin{align*}
  \var_{\delta_{x}}\Bigl(
	\crochet{e^{-\Lambda \, n \, \delta}\, \eta_{n \, \delta}, \, \hat v} 
  \Bigr)
  =
  \omega^{-2\,n}\,\sigma_n(x)
  \leq 
		C_{\delta,x}\, \frac{1-\omega^{-n}}{\omega^2-\omega}
\end{align*}
which is bounded if $\omega>1$, that is $\Lambda>0$.
\end{proof}

\medskip

\begin{proof}[Proof of Theorem \ref{th.critere}]
From Lemma \ref{lemme.martingale.bornee},
$\crochet{ e^{-\Lambda \, t}\, \eta_t, \, \hat v}$
is a non-negative martingale under the probability $\PP_{\delta_{x}}$. That gives the existence of $\mathcal{Z}$ such that \eqref{limit.Z} holds.
Moreover for any $t>0$:
\begin{align}
\label{eq.th.critere1}
  \EE_{\delta_{x}}(\crochet{ e^{-\Lambda \, t}\, \eta_t, \, \hat v})  = 
  \EE_{\delta_{x}}(\crochet{  \eta_0, \, \hat v}) = \hat v(x) \,.
\end{align}

\begin{enumerate}
\item \label{pt.proof.surcrit} \textit{Proof of \ref{surcritique}}. If $\Lambda>0$, from Lemma \ref{lemme.martingale.bornee}, $\crochet{ e^{-\Lambda \, t}\, \eta_t, \, \hat v}$ is bounded in $L^2$, then $\crochet{ e^{-\Lambda \, t}\, \eta_t, \, \hat v}$ converges towards $\mathcal{Z}$ in $L^1$ under $\PP_{\delta_{x}}$, i.e. 
$$
	\EE_{\delta_{x}}(\crochet{ e^{-\Lambda \, t}\, \eta_t, \, \hat v}) \xrightarrow[t\to\infty]{} \EE_{\delta_{x}}(\mathcal{Z}). 
$$
From \eqref{eq.th.critere1}, we get $\EE_{\delta_{x}}(\mathcal{Z})=\hat v(x)>0$. As the event $\{\mathcal{Z}>0\}$ is included in the event $\{\text{survival}\}$ then $\PP_{\delta_{x}}(\text{survival})>0$.

\item \label{pt.proof.souscrit} \textit{Proof of \ref{sous-critique} for $\Lambda<0$.} In the case when $\Lambda<0$, we conclude directly from Proposition \ref{majoration.esp.Nt} that the population goes to extinction a.s. 

\item \label{pt.proof.surcrit.M} \textit{Proof that $\M_t^\varepsilon\to 0$ a.s. for $\Lambda=0$.}
Assume that the initial individual has mass $x$ at time $t=0$, and let $\varepsilon>0$ such that $x\in(\varepsilon,\mmax-\varepsilon)$ and 
$\textstyle q\left(\left[ \varepsilon / (\mmax-2\,\varepsilon), \, 1/2\right]\right)>0$. 

\medskip

Let $c>0$ and $T_0\geq 0$ be fixed. There exists $t_0>0$ such that
$$
1>
\gamma
	:=
		\inf_{\varepsilon \leq x \leq \mmax-\varepsilon} 
		\PP_{\delta_{x}}\left(M_{t_0}^\varepsilon \geq c \right)>0\,.
$$		

Indeed, we can construct an event which is included in the previous one and whose probability is uniformly bounded below in $\varepsilon \leq x\leq \mmax-\varepsilon$. 
For example, we can consider the event where any particle descended from this individual divides when its mass belongs to $(\mmax-2\,\varepsilon, \, \mmax-\varepsilon)$ with a division proportion $\alpha \in \left[ \varepsilon / (\mmax-2\,\varepsilon), \, 1/2\right]$ and this during $B$ generations. In particular, each daughter cell has mass in $[\varepsilon,\,\mmax-\varepsilon]$. For $B$ sufficiently large, we have, for some well chosen $t_0$, the previous property.

Then for any $\eta_0$ such that $\text{Supp } \eta_0 \cap [\varepsilon, \mmax-\varepsilon] \neq \emptyset$, we have
\begin{align}
\label{proba.mt0.majoree}
	\PP_{\eta_0}\left(
					M_{t_0}^\varepsilon \leq c
				\right) \leq 1-\gamma <1 \,.
\end{align}

We define the following sequence of stopping times
$$
	\left\{
		\begin{tabular}{l}
		$\tau_0=T_0$\\
		$\tau_k = \inf\left\{t>\tau_{k-1}+t_0, \, 
						M_t^\varepsilon \geq 1 \right\}$ for any $k\geq 1\,.$
		\end{tabular}
	\right.
$$
Then, using recursively the Markov property and \eqref{proba.mt0.majoree},
\begin{align}
\nonumber
\PP\left(
		\forall t \geq T_0, \,
		M_t^\varepsilon \leq c 
		\text{ and }
		\exists s \geq t, \,
		M_s^\varepsilon \geq 1
	\right)
&\leq
	\PP\left(
		\forall k, \,
		\tau_k<\infty \text{ and } M_{\tau_k+t_0}^\varepsilon\leq c 
	\right)
\\
\label{proba.M.minore}
& \leq
	\prod_{k} (1-\gamma) 
	=0\,.
\end{align}
The previous result holds for any $c>0$, then either $\limsup_{t\to\infty} M_t^\varepsilon = \infty$ or $M_t^\varepsilon \to 0$ a.s.\ If $\limsup_{t\to\infty} M_t^\varepsilon = \infty$ then $\limsup_{t\to\infty}\crochet{\eta_t , \hat v} = \infty$, which contradicts the fact that $\mathcal{Z}$ is integrable.
Hence $M_t^\varepsilon \to 0$ a.s.

\item \label{pt.proof.crit.extinc} \textit{Proof that $M_t^\varepsilon \to 0$ implies extinction}. In view of the paragraph after \eqref{ineq.Mt.varepsilon} in the proof of Proposition \ref{majoration.esp.Nt}, for all $t\geq 0$, we have the
inequality
\begin{equation*}
  K^\varepsilon_t\leq\sum_{(i,s)\in\mathbb{N}\times\mathbb{R}_+}\indic_{\{X^i_s=x_0\}}\indic_{\{X^i_s\text{ satisfies }(P)_{s,t}\}},
\end{equation*}
with the notations used in the proof of Proposition \ref{majoration.esp.Nt}.

We introduce the sequence of stopping times $\theta_0=0$,
\begin{equation*}
  \theta_{i+1}:=\inf\{t>\theta_i:\text{there is an individual of mass }x_0\text{ at time }t\},\quad\forall i\geq 1
\end{equation*}
and $I_i\in\{1,\ldots,N_{\theta_i}\}$ the index of the individual with mass $x_0$ at time $\theta_i$. Note that this sequence is a.s.\
well-defined since the probability that two particles have the same mass after each birth event is zero.
Then, we have for all $t\geq 0$
\begin{equation*}
  K^\varepsilon_t\leq\sum_{i\geq 1}\indic_{\{\theta_i\leq t\}}\Gamma_i(t),
\end{equation*}
where the random variable $\Gamma_i(t)$ is defined for all $t\geq 0$ and $i\geq 1$ as
\begin{equation*}
  \Gamma_i(t):=
  \begin{cases}
    \indic_{\{\text{the individual $I_i$ at time $\theta_i$ satisfies }(P)_{\theta_i,t}\}} & \text{if }t\leq \theta_i \\
    1 & \text{if }t>\theta_i.
  \end{cases}
\end{equation*}
Now, it follows from Markov's property that, for all $t\geq 0$, almost surely on the event $\{\theta_i<\infty\}$,
\begin{equation*}
  \mathbb{E}_{\delta_{x}}(\Gamma_i(t+\theta_i)\mid\mathcal{F}_{\theta_i})=\mathbb{E}_{\delta_{x_0}}(\text{the initial individual satisfies }(P)_{0,t})=e^{-Dt}\, .
\end{equation*}
Since the family of random variables $(\Gamma_i(t),t\geq 0)$ is non-increasing, it follows that
\begin{equation*}
  \forall i\geq 1,\quad \lim_{t\rightarrow+\infty}\Gamma_i(t)=0\quad\text{a.s.\ on }\{\theta_i<\infty\}.
\end{equation*}

We proved above that $M^\varepsilon_t\rightarrow 0$ a.s.\ as $t\rightarrow\infty$. Hence, the number of individuals
which had a mass belonging to $(\varepsilon,M-\varepsilon)$ at some time during their life is a.s.\ finite. Therefore, a.s.\ there exists
$i\geq 1$ such that $\theta_i=\infty$ and
\begin{equation*}
  K^\varepsilon_t\leq \sum_{i\geq 1\text{ s.t.\ }\theta_i<\infty}\Gamma_i(t)\rightarrow 0\quad\text{a.s.}
\end{equation*}
as $t\rightarrow+\infty$, since the right hand side is a.s.\ a finite sum of random variables a.s.\ converging to 0.

\medskip

In the same way as in the proof of Proposition \ref{majoration.esp.Nt}, 
reducing $\varepsilon$ if necessary, such that $\taudiv_0 \eqdef \max_{0\leq x\leq \varepsilon} \taudiv(x) < D$, $L_t^\varepsilon$ is dominated by a branching process with birth rate $\taudiv_0$, death rate $D$ and independent inhomogeneous immigration rate $2\,\bar \taudiv\,(K_t^\varepsilon+M_t^\varepsilon)$. As $K_t^\varepsilon+M_t^\varepsilon \to 0$ a.s.\, then $L_t^\varepsilon \to 0$ a.s.

\medskip

Finally, the population goes to extinction a.s.

\bigskip

\item \textit{Proof of \ref{evt.survie}}. Regardless of the sign of $\Lambda$, we have  $\{\mathcal{Z}>0\} \subset \{survival\}$.

\begin{itemize}
\item[\textbullet] If $\Lambda \leq 0$, from Points \ref{pt.proof.souscrit} to \ref{pt.proof.crit.extinc} we have $\PP_{\delta_{x}}(\text{survival})=0$. Hence $\PP_{\delta_{x}}(\mathcal{Z}>0)=0$.

\item[\textbullet] 
Equation \eqref{proba.M.minore} also holds for $\Lambda>0$. Then, from Point \ref{pt.proof.crit.extinc} of this proof for any $A>0$ and $T_0\geq0$ 
\begin{align}
\label{proba.M.borne.nulle}
\PP(\forall t\geq T_0, M_t^\varepsilon \leq A \text{ and survival})=0\,,
\end{align}
i.e. 
for any $A>0$, on the event $\{\text{survival}\}$, there exists a.s.\ $T_A(\omega)<\infty$, first time such that $M_{T_A}^\varepsilon \geq A$. We define the following stopping time 
$$\tau_A = T_A \wedge T_{\text{ext}}<\infty\,,$$ 
where $T_{\text{ext}}$ is the extinction time of the population.
Let $\gamma = \sup_{x\in [\varepsilon,\mmax-\varepsilon]} p(x)$. Then
\begin{align*}
\PP_{\delta_x}(\mathcal{Z}>0)
	&=
	\PP_{\delta_x}\left(\left\{\mathcal{Z}>0\right\} \cap \left\{\tau_A=T_A\right\}\right)
	+\PP_{\delta_x}\left(\left\{\mathcal{Z}>0\right\} 
						\cap \left\{\tau_A=T_{\text{ext}}\right\}
					\right)
\\
	&=
		\PP_{\eta_{\tau_A}}(\mathcal{Z}>0)\times\PP_{\delta_x}(\tau_A=T_A)
		+
		0 
\\
	&\geq
		\left(
			1-\gamma^A
		\right)\,
		\PP_{\delta_x}(\tau_A=T_A)\,.
\end{align*}
From \eqref{proba.M.borne.nulle},
$$
	\left\{\text{survival} \right\}
		=
		\left\{ \limsup_{t\to\infty}M_t^\varepsilon>A \right\}
		\subset \left\{T_A < \infty \right\}\,.
$$
From Point \ref{pt.proof.surcrit}, $\gamma<1$, then for any $\delta>0$, there exists $A_\delta$ such that $\gamma^{A_\delta} \leq \delta$, hence
\begin{align*}
\PP_{\delta_x}(\mathcal{Z}>0)
	&\geq
		\left(
			1-\delta
		\right)\,
		\PP_{\delta_x}(\tau_{A_\delta}=T_{A_\delta})
		\geq 
		\left(
			1-\delta
		\right)\,
		\PP_{\delta_x}(\text{survival})\,.
\end{align*}
We conclude that
\begin{align*}
\PP_{\delta_x}(\mathcal{Z}>0)
	&\geq
		\PP_{\delta_x}(\text{survival})
\end{align*}
and as $\{\mathcal{Z}>0\}\subset\{\text{survival}\}$, the conclusion follows.
\qedhere
\end{itemize}
\end{enumerate}
\end{proof}

\section{Growth-fragmentation-death eigenproblem}
\label{sec.gfd.eigenpbl}

This section is devoted to the study of the deterministic model. Under particular assumptions which cover some biological cases as bacterial Gompertz growth,  we prove the existence of eigenelements satisfying Assumption \ref{existence.eigenelements}. The method is similar to the one used by \citep{Doumic2007a} which proves existence of eigenelements for more restrictive growth functions. \cite{Doumic2010a} show such result for non-bounded growth function, which also do not cover the bacterial Gompertz growth.

\bigskip

We consider the eigenproblem
\begin{align}
\label{ann.eq.eigenproblem}
\left\{
\begin{array}{l}
		\partial_x (g(x)\,u(x))
		+(\Lambda+D+\taudiv(x))\, u(x)
		=
  		2\int_0^\mmax \frac{\taudiv(z)}{z}\, 
		q\left(\frac{x}{z}\right)\, 
		u(z)\,\dif z\,,
\\
	g(0)\, u(0) = 0\,,
   \qquad
   D+\Lambda >0\,,
   \qquad
   u(x) \geq 0\,,
   \qquad 
   \int_0^\mmax  u(x)\,\dif x = 1\,,
\end{array}
\right.
\end{align}
and the adjoint problem
\begin{align}
\label{ann.eq.eigenproblem.adjoint}
\left\{
\begin{array}{l}
		g(x)\,\partial_x \phi(x)
		-(\Lambda+D+\taudiv(x))\, \phi(x)
		=
  		2\,\taudiv(x)\,
  			\int_0^1 q(\alpha)\,\phi(\alpha\,x)\,\dif \alpha \,,
\\
   \phi(x) \geq 0\,,
   \qquad 
   \int_0^\mmax u(x)\,\phi(x) \,\dif x = 1\,.
\end{array}
\right.
\end{align}

\begin{hypotheses}
\label{hypotheses}
\begin{enumerate}
\item $q(0)=q(1)=0$.
\item $b\in C[0,\mmax]$. 
\item \label{hyp.equicontinue} The family of functions $\left(x \mapsto \frac{b(y)}{y} \, q\left( \frac{x}{y}\right)\right)_y$ is uniformly equicontinuous, i.e. there exists an application $\omega \in C(\RR^+)$ such that $\lim_{x \to 0}\omega(x)=0$ and for any $y, x_1, x_2 \in [0,\mmax]$
$$
	\left|
	\frac{b(y)}{y} \, q\left( \frac{x_1}{y}\right)
	-
	\frac{b(y)}{y} \, q\left( \frac{x_2}{y}\right)
	\right|	
	\leq
	\omega(|x_1-x_2|) \, ,
$$
with the convention that $q(\alpha)=0$ for $\alpha>1$.

\item \label{hyp.g.2} There exists a non-decreasing function $F\in C(\RR^+)$ such that 
$$
	\int_0^1 \frac{1}{F(x)}\,\dif x = \infty
$$
and for any $x,y \in [0,\mmax]$,
$$
	|g(x)-g(y)| \leq F(|x-y|) \,.
$$
\item \label{hyp.int.finie}
$
\int_0^\infty \int_0^\mmax 
		e^{-\int_0^t b(A_s(y))\, \dif s} \,
		\dif y \, \dif t 
		< \infty \, .
$
\end{enumerate}
\end{hypotheses}

\medskip

Under Assumptions \ref{da.hypo.model.reduit}-\ref{hyp.g} and \ref{hypotheses}-\ref{hyp.g.2}, for any initial condition $x\in(0,\mmax)$, the flow $t\mapsto A_t(x)$ does not reach $\mmax$ in finite time and the inverse flow $t\mapsto A^{-1}_t(x)$ solution of
$$
\left\{
	\begin{array}{l}
	\frac{\partial}{\partial_t} A^{-1}_t(x) = -g(A^{-1}_t(x)) \,,
\\
	A^{-1}_0(x) = x
	\end{array}
\right.
$$
does not reach 0 in finite time. The flow is then a $C^1$-diffeomorphism on $(0,\mmax)$ \cite[Th. 6.8.1]{demazure2000a}.

\begin{remark}
\label{remark.hypo}
Assumption \ref{hypotheses}-\ref{hyp.int.finie} is similar to the one of \cite{Doumic2007a}.
Since $b(0)=0$, it implies that the flow $A_t(x)$ moves away from 0 sufficiently fast enough. So it imposes opposite conditions on $g$ that Assumption \ref{hypotheses}-\ref{hyp.g.2}. We will see below that these assumptions are compatible and are satisfied in a large range of biological situations.
\end{remark}

\begin{theorem}
\label{lemme.psi}
Under Assumptions \ref{da.hypo.model.reduit} and \ref{hypotheses}, there exist $\Lambda\geq-D$ and a non-negative function $\Psi \in C[0,\mmax]$ such that $(\Lambda, \Psi)$ is solution of
$$
 \Psi (x)
 	=
 		2\,\int_0^\infty \int_0^\mmax
 		\frac{b(A_t(y))}{A_t(y)}\,q\left(\frac{x}{A_t(y)} \right)\,\Psi(y)\,
 		e^{-\int_0^t (\Lambda + D+b(A_s(y) )\,\dif s}\,
 		\dif y\, \dif t \,.
$$
\end{theorem}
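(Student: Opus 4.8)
The plan is to turn the fixed-point equation into an eigenvalue problem depending on a spectral parameter, to solve the latter for each value of the parameter by the Krein--Rutman theorem, and then to tune the parameter so that the Krein--Rutman eigenvalue equals $1$. For $\lambda\geq-D$ I would introduce the integral operator $\mathcal{T}_\lambda$ on $C[0,\mmax]$,
\[
  \mathcal{T}_\lambda\phi(x)
  = 2\int_0^\infty\!\!\int_0^\mmax \frac{b(A_t(y))}{A_t(y)}\,q\!\Bigl(\tfrac{x}{A_t(y)}\Bigr)\,\phi(y)\,
    e^{-\int_0^t(\lambda+D+b(A_s(y)))\,\dif s}\,\dif y\,\dif t ,
\]
so that the statement reduces to finding $\Lambda\geq-D$ and $\Psi\in C[0,\mmax]$, $\Psi\geq0$, $\Psi\neq 0$, with $\mathcal{T}_\Lambda\Psi=\Psi$. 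The first step is to check that $\mathcal{T}_\lambda$ is a well-defined compact positive operator for every $\lambda\geq-D$. The kernel $\tfrac{b(y)}{y}q(\cdot/y)$ is uniformly bounded in $y$ (it vanishes at $x=0$ since $q(0)=0$ and is uniformly equicontinuous, by Assumption \ref{hypotheses}-\ref{hyp.equicontinue}); hence $\mathcal{T}_\lambda$ is bounded, uniformly in $\lambda\geq-D$, thanks to Assumption \ref{hypotheses}-\ref{hyp.int.finie}, which controls $\int_0^\infty\!\int_0^\mmax e^{-\int_0^t b(A_s(y))\,\dif s}\,\dif y\,\dif t$; continuity of $\mathcal{T}_\lambda\phi$ and equicontinuity of the image of a bounded set again come from Assumption \ref{hypotheses}-\ref{hyp.equicontinue}, whence compactness by Arzel\`a--Ascoli. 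Positivity is clear, and $\mu(\lambda):=r(\mathcal{T}_\lambda)>0$ because the kernel of $\mathcal{T}_\lambda$ is positive on a subset of $(0,\mmax)^2$ of positive Lebesgue measure (using $b>0$ on $(\mdiv,\mmax)$, that the flow $t\mapsto A_t(y)$ eventually enters $(\mdiv,\mmax)$, and positivity of $q$ on a set of positive measure). The Krein--Rutman theorem then provides, for each $\lambda\geq-D$, a function $\Psi_\lambda\in C[0,\mmax]$, $\Psi_\lambda\geq0$, $\Psi_\lambda\neq0$, with $\mathcal{T}_\lambda\Psi_\lambda=\mu(\lambda)\,\Psi_\lambda$.

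Next I would study $\lambda\mapsto\mu(\lambda)$ on $[-D,\infty)$. Since $e^{-(\lambda+D)t}$ is non-increasing in $\lambda$, the kernels are pointwise monotone, so $\mu$ is non-increasing; $\lambda\mapsto\mathcal{T}_\lambda$ is continuous for the operator norm (dominated convergence, the domination coming from Assumption \ref{hypotheses}-\ref{hyp.int.finie}), so, by continuity of the spectral radius along norm-convergent sequences of compact operators, $\mu$ is continuous; and $\|\mathcal{T}_\lambda\|\to0$ as $\lambda\to+\infty$ (dominated convergence) gives $\mu(\lambda)\to0$. It remains to evaluate $\mu(-D)$. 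Integrating $\mathcal{T}_{-D}\Psi_{-D}=\mu(-D)\Psi_{-D}$ over $x\in[0,\mmax]$, using $\int_0^\mmax q(x/z)\,\dif x=z$ and $\int_0^\infty b(A_t(y))e^{-\int_0^t b(A_s(y))\,\dif s}\,\dif t = 1-e^{-\int_0^\infty b(A_s(y))\,\dif s}$, one gets
\[
  \mu(-D)\int_0^\mmax\Psi_{-D}(x)\,\dif x
  = 2\int_0^\mmax\Psi_{-D}(y)\,\bigl(1-e^{-\int_0^\infty b(A_s(y))\,\dif s}\bigr)\,\dif y .
\]
Now Assumption \ref{hypotheses}-\ref{hyp.int.finie} forces $\int_0^\infty b(A_s(y))\,\dif s=\infty$ for a.e.\ $y$ (otherwise $t\mapsto e^{-\int_0^t b(A_s(y))\,\dif s}$ would not be integrable on $\RR_+$), so the bracket equals $1$ a.e., and since $\Psi_{-D}$ is continuous, non-negative and not identically $0$, $\int_0^\mmax\Psi_{-D}>0$; hence $\mu(-D)=2$.

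Putting these together, $\mu$ is continuous and non-increasing on $[-D,\infty)$ with $\mu(-D)=2>1$ and $\lim_{\lambda\to+\infty}\mu(\lambda)=0<1$, so by the intermediate value theorem there is $\Lambda\in(-D,\infty)$ with $\mu(\Lambda)=1$; then $\Lambda\geq-D$ and $\Psi:=\Psi_\Lambda\in C[0,\mmax]$, $\Psi\geq0$, $\Psi\neq0$, satisfies $\mathcal{T}_\Lambda\Psi=\Psi$, which is exactly the claimed equation. The main obstacle, in my view, is obtaining a control of $\mathcal{T}_\lambda$ (well-posedness, compactness, and continuity of its spectral radius) that is uniform in $\lambda$ down to the boundary value $-D$; this is precisely where Assumptions \ref{hypotheses}-\ref{hyp.equicontinue} and \ref{hypotheses}-\ref{hyp.int.finie} are used, the latter also being what pins down $\mu(-D)=2$.
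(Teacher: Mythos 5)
Your overall strategy — set up a family of positive compact integral operators $\mathcal{T}_\lambda$, invoke Krein--Rutman for each $\lambda$, study the map $\lambda\mapsto\mu(\lambda)$ to locate $\mu=1$ — is precisely the skeleton of the paper's proof. But the paper adds a regularization that you omit, and that omission opens a genuine gap.

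The paper does not apply Krein--Rutman to $\mathcal{T}_\lambda$ directly. It first replaces the kernel by $\frac{b(A_t(y))}{A_t(y)}q\bigl(\tfrac{x}{A_t(y)}\bigr)+\tfrac{\varepsilon}{M}$ and $b$ by $b_\varepsilon=b+\varepsilon$, yielding an operator $G^\varepsilon_\lambda$ which is \emph{strongly} positive: for every $f\geq 0$, $f\not\equiv 0$, one has $G^\varepsilon_\lambda f(x)\geq 2\,\tfrac{\varepsilon}{M}\int_0^\infty\!\int_0^M f(y)\,e^{-\int_0^t(\lambda+b_\varepsilon(A_s(y)))\,\dif s}\,\dif y\,\dif t>0$ for all $x$. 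This is what makes Krein--Rutman give a strictly positive eigenvalue $\mu^\varepsilon_\lambda$ with a positive, suitably unique eigenfunction, and it is also what drives the proof of continuity of $\lambda\mapsto\mu^\varepsilon_\lambda$ (via a subsequence/uniqueness argument, not abstract continuity of $r(\cdot)$). One then finds $\Lambda_\varepsilon$ with $\mu^\varepsilon_{\Lambda_\varepsilon}=1$, and lets $\varepsilon\to 0$ along a subsequence, using equicontinuity (Arzel\`a--Ascoli) and the normalization $\|\Psi_\varepsilon\|_\infty=1$ to ensure the limit $\Psi$ is nonzero, and dominated convergence via Assumption \ref{hypotheses}-\ref{hyp.int.finie} to pass to the limit in the fixed-point equation.

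In your version, the step ``$r(\mathcal{T}_\lambda)>0$ because the kernel is positive on a set of positive Lebesgue measure'' is not a valid inference. A positive compact operator can have spectral radius zero even when its kernel is positive on a large set (Volterra-type kernels are the standard counterexample). To conclude $r>0$ without the $\varepsilon$-perturbation you would need to establish irreducibility of $\mathcal{T}_\lambda$ on $C[0,M]$ (and then invoke de Pagter's theorem or the irreducible form of Krein--Rutman), which is a non-trivial argument here because $b$ vanishes on $[0,\mdiv]$ and $q$ vanishes at $0$ and $1$: the kernel is not positive on a neighborhood of the diagonal, and showing that some power of the operator has a strictly positive kernel requires tracking the flow and the support of $q$. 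The paper's $\varepsilon/M$ perturbation is exactly the device that bypasses this. Similarly, your appeal to ``continuity of the spectral radius along norm-convergent sequences of compact operators'' is not a theorem in this generality (spectral radius is only upper semicontinuous); the paper instead extracts convergent subsequences of eigenpairs and uses the simplicity and positivity of the dominant eigenvalue, both of which come from the strong positivity furnished by the regularization. Your evaluation $\mu(-D)=2$ is the right idea — the paper does the analogous computation $\lim_{\lambda\to 0}\mu^\varepsilon_\lambda=2$ after setting $D=0$ — but in your framework it presupposes that $\mathcal{T}_{-D}$ has a nonzero Perron eigenfunction, which is the very point in question. The fix is to insert the paper's regularization, run your argument for $G^\varepsilon_\lambda$, and then pass to the limit $\varepsilon\to 0$.
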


\begin{corollary}
\label{eigenelements}
Under Assumptions \ref{da.hypo.model.reduit} and \ref{hypotheses}, there exists a solution $(\Lambda, u)$ of \eqref{ann.eq.eigenproblem} such that $u\in C^1(0,\mmax)$. In particular $\Lambda+D>0$.
\end{corollary}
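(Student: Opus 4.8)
The plan is to recover $u$ from the function $\Psi$ furnished by Theorem~\ref{lemme.psi} by integrating the transport form of \eqref{ann.eq.eigenproblem} along the growth flow. Abbreviating the fragmentation term as $Qu(x):=2\int_0^\mmax \frac{\taudiv(z)}{z}\,q(x/z)\,u(z)\,\dif z$, the first line of \eqref{ann.eq.eigenproblem} reads $\partial_x(g(x)u(x))+(\Lambda+D+\taudiv(x))\,u(x)=Qu(x)$; solving this first-order ODE in $x$ with the condition $g(0)u(0)=0$ suggests \emph{defining}
\[
  g(x)\,u(x):=\int_0^x \Psi(y)\,\exp\Bigl(-(\Lambda+D)\,t(y,x)-\int_y^x \tfrac{\taudiv(z)}{g(z)}\,\dif z\Bigr)\,\dif y,
\]
where $t(y,x)=\int_y^x \dif z/g(z)$ as in \eqref{temps.atteinte.masse}. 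First I would differentiate this identity under the integral sign (legitimate on every compact subinterval of $(0,\mmax)$, since $\partial_x$ of the exponential weight is bounded there) to get $\partial_x(gu)=\Psi-(\Lambda+D+\taudiv(x))\,u(x)$, i.e.\ $\partial_x(gu)+(\Lambda+D+\taudiv)u=\Psi$. Then, performing the substitution $z'=A_t(y)$ (for $y$ fixed) in the integral equation of Theorem~\ref{lemme.psi} and swapping the order of integration (all integrands non-negative, so Tonelli applies with no extra work), I would identify the right-hand side as exactly $Qu(x)=\Psi(x)$. Combining the two gives that $u$ satisfies the eigenequation of \eqref{ann.eq.eigenproblem}.

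Next I would dispatch the side conditions. Since $\Psi\in C[0,\mmax]$ is bounded and the exponential weight is $\le 1$ (here $\Lambda+D\ge 0$, from Theorem~\ref{lemme.psi}), one has $g(x)u(x)\le \|\Psi\|_\infty\,x\to 0$ as $x\to 0$, so $g(0)u(0)=0$; non-negativity of $u$ is immediate; and $u=(gu)/g\in C^1(0,\mmax)$ because $g\in C^1(0,\mmax)$ is positive there while $gu\in C^1$ by the computation above. For the normalisation, the same substitution $z'=A_t(y)$ yields $\int_0^\mmax u(x)\,\dif x=\int_0^\mmax \Psi(y)\bigl(\int_0^\infty e^{-(\Lambda+D)t-\int_0^t \taudiv(A_s(y))\dif s}\dif t\bigr)\dif y$, which is at most $\|\Psi\|_\infty$ times the finite integral of Assumption~\ref{hypotheses}-\ref{hyp.int.finie}, hence finite, and is strictly positive because $\Psi\not\equiv 0$ (the Krein--Rutman argument behind Theorem~\ref{lemme.psi} provides this, and it is in any case forced, since $\Psi\equiv 0$ would give $u\equiv 0$). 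Dividing $u$ and $\Psi$ by $\|u\|_{L^1}$ preserves everything and gives $\int_0^\mmax u=1$.

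The last — and only genuinely delicate — point is the strict inequality $\Lambda+D>0$ (Theorem~\ref{lemme.psi} gives only $\Lambda+D\ge 0$), and this is what I expect to be the main obstacle. I would first extract from Assumption~\ref{hypotheses}-\ref{hyp.int.finie} that $\int_y^\mmax \taudiv(z)/g(z)\,\dif z=\int_0^\infty \taudiv(A_s(y))\,\dif s=+\infty$ for a.e.\ $y\in(0,\mmax)$: otherwise the inner $t$-integral of the previous paragraph would be infinite on a set of $y$ of positive measure. Since the exponential weight in the formula for $gu$ is $\le\exp(-\int_y^x \taudiv/g)$, it then tends to $0$ for a.e.\ $y$ as $x\to\mmax$, so by dominated convergence $g(x)u(x)\to 0$ as $x\to\mmax$ as well. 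Integrating the eigenequation over $(0,\mmax)$ now makes the boundary term $[gu]_0^\mmax$ vanish, and using $\int_0^\mmax q(x/z)\,\dif x=z$ one gets $\int_0^\mmax Qu=2\int_0^\mmax \taudiv\,u$, whence
\[
  (\Lambda+D)\int_0^\mmax u(x)\,\dif x=\int_0^\mmax \taudiv(x)\,u(x)\,\dif x.
\]
With $\int_0^\mmax u=1$ this reads $\Lambda+D=\int_0^\mmax \taudiv\,u\ge 0$; it is strictly positive because $u$ cannot vanish on $(\mdiv,\mmax)$: if it did, then, as $\taudiv\equiv 0$ on $[0,\mdiv]$, we would get $Qu\equiv 0$, i.e.\ $\Psi=Qu\equiv 0$, contradicting $\Psi\not\equiv 0$; hence $u>0$ on a subinterval of $(\mdiv,\mmax)$, where $\taudiv>0$, so $\int_0^\mmax\taudiv\,u>0$. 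The routine ingredients here are the differentiation under the integral and the two changes of variables (standard for growth--fragmentation operators); the crux is the combination of the two observations that Assumption~\ref{hypotheses}-\ref{hyp.int.finie} amounts to $\int_y^\mmax \taudiv/g=\infty$ a.e.\ (which both makes the construction consistent and kills the boundary term at $\mmax$) and that $u$ cannot be concentrated on $[0,\mdiv]$ without trivialising $\Psi$.
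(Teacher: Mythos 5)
Your construction of $u$ and the verification that it satisfies the eigenequation (differentiation under the integral, the change of variable $z=A_t(y)$, Fubini) match the paper's proof exactly, and the normalisation and integrability arguments are the same. Where you diverge is in establishing $\Lambda+D>0$. The paper multiplies the eigenequation by $x$ before integrating over $(0,\mmax)$, which, after the boundary term drops and the identity $\int_0^1\alpha\,q(\alpha)\,\dif\alpha=\tfrac12$ is used, gives $\Lambda+D=\int g\,u\big/\int x\,u$; positivity is then immediate from $g>0$ on $(0,\mmax)$ and $u\not\equiv 0$, with no need to locate where $u$ is positive. You instead integrate directly, obtaining $\Lambda+D=\int b\,u\big/\int u$, and must then rule out that $u$ is supported in $[0,\mdiv]$ (which you do correctly, by noting that $b\,u\equiv 0$ would force $Qu=\Psi\equiv 0$). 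Both routes are valid, but the paper's weight $x$ buys a cleaner positivity argument. On the other hand, your proof is more careful than the paper's on one point both routes actually need: showing that the boundary contribution at $\mmax$ vanishes. Your observation that Assumption~\ref{hypotheses}-\ref{hyp.int.finie} forces $\int_y^\mmax b/g=\infty$ for a.e.\ $y$, hence $g(x)u(x)\to 0$ as $x\to\mmax$ by dominated convergence, is exactly the justification the paper leaves implicit (for the paper's version the boundary term is $\lim_{x\to\mmax}x\,g(x)u(x)$, and $g(\mmax)=0$ alone does not settle it since $u$ could a priori blow up); this is a genuine small gap in the paper that your argument fills.
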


\begin{proof}
For any $x\in (0,\mmax)$ we set
\begin{align}
\label{def.u}
	u(x)
		= \frac{1}{g(x)} \,
			\int_0^x 
				\Psi(y)\, 
				e^{-\int_y^x \frac{\Lambda+D+b(z)}{g(z)}\,\dif z} \,
				\dif y \,.
\end{align}
From \eqref{def.u},
\begin{align*}
\partial_x(g(x)\,u(x)) = -(\Lambda+D+b(x))\,u(x) + \Psi(x)\,.
\end{align*}
From the change of variable $t\mapsto z=A_t(y)$ and Fubini's theorem,
\begin{align*}
\Psi(x)
	&=
		2\,\int_0^\mmax\,\int_y^\mmax 
 		\frac{b(z)}{z}\,q\left(\frac{x}{z} \right)\,\Psi(y)\,
 		e^{-\int_y^{z} \frac{\Lambda + D+b(w)}{g(w)}\,\dif w}\,
 		\frac{\dif z}{g(z)}\, \dif y  
\\
	&=
		2\,\int_0^\mmax
 		\frac{b(z)}{z}\,q\left(\frac{x}{z} \right)\,
 		\frac{1}{g(z)}
 		\int_0^z \Psi(y)\,
 		e^{-\int_y^{z} \frac{\Lambda + D+b(w)}{g(w)}\,\dif w}
 		\dif y \,
 		\dif z 
\\
	&=
		2\,\int_0^\mmax
 		\frac{b(z)}{z}\,q\left(\frac{x}{z} \right)\, u(z)\,
 		\dif z  \,.
\end{align*}
Moreover
\begin{align*}
\int_0^\mmax u(x)\,\dif x
	&=
		\int_0^\mmax 
		\frac{1}{g(x)} \,
			\int_0^x 
				\Psi(y)\, 
				e^{-\int_y^x \frac{\Lambda+D+b(z)}{g(z)}\,\dif z} \,
				\dif y\,\dif x
\\
	&\leq
		\norme{\Psi}_\infty \,	
		\int_0^\mmax 
		\frac{1}{g(x)} \,
			\int_0^x 
				e^{-\int_y^x \frac{\Lambda+D+b(z)}{g(z)}\,\dif z} \,
				\dif y\,\dif x
\\
	&\leq
		\norme{\Psi}_\infty \,	
		\int_0^\mmax 
			\int_0^\infty 
				e^{-\int_0^t (\Lambda+D+b(A_u(y)))\,\dif u} \,
				\dif t \, \dif y \, 
\end{align*}
which is finite by Assumption \ref{hypotheses}-\ref{hyp.int.finie}.
Finally $\tilde u(x) = \frac{u(x)}{\int_0^\mmax u(x)\,\dif x}$ is solution of \eqref{ann.eq.eigenproblem}.

\medskip
Multiplying \eqref{ann.eq.eigenproblem} by $x$ and integrating over $[0,\mmax]$, we get
$$
	-\int_0^\mmax g(x)\,u(x)\,\dif x + (\Lambda+D)\,\int_0^\mmax x\,u(x)\,\dif x
	+\int_0^\mmax x\, b(x)\,u(x)\,\dif x
	=
	2\,\int_0^\mmax \int_x^\mmax 
		\frac{b(z)}{z}\,q\left(\frac xz\right)\,u(z)\,\dif z\,.
$$
We easily check that
$$
	2\,\int_0^\mmax \int_x^\mmax 
		\frac{b(z)}{z}\,q\left(\frac xz\right)\,u(z)\,\dif z
		=
		\int_0^\mmax x\, b(x)\,u(x)\,\dif x\,.
$$
From Fubini's theorem and as $\int_0^1 \alpha\,q(\alpha)\,\dif \alpha = \frac 12$,
$$
	(\Lambda+D) 
		= \frac{\int_0^\mmax g(x)\,u(x)\,\dif x}{\int_0^\mmax x\,u(x)\,\dif x}\,,
$$
which is positive because $g(x)>0$ for any $x\in (0,\mmax)$ and $u\neq 0$.
\end{proof}

\medskip

\begin{theorem}
\label{th.pb.adjoint}
Under Assumptions \ref{da.hypo.model.reduit} and \ref{hypotheses}, there exists a solution $\phi \in C[0,\mmax] \cap C^1(0,\mmax)$ of \eqref{ann.eq.eigenproblem.adjoint}.
\end{theorem}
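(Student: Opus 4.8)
The plan is to obtain $\phi$ as the principal eigenfunction, for the eigenvalue $1$, of the integral operator dual to the one used in Theorem~\ref{lemme.psi}, and then to read off from the corresponding fixed‑point equation that $\phi\in C^1(0,\mmax)$ and solves \eqref{ann.eq.eigenproblem.adjoint}. With $\Lambda$ the value produced by Theorem~\ref{lemme.psi}, I would introduce the operator
\[
  \mathcal{T}\phi(x)
  :=
  2\int_0^\infty e^{-\int_0^t(\Lambda+D+b(A_s(x)))\,\dif s}\,b(A_t(x))
     \int_0^1 q(\alpha)\,\phi(\alpha\,A_t(x))\,\dif\alpha\;\dif t ,
\]
whose non‑negative fixed points are, by Remark~\ref{remark.hat.v}, solutions of the adjoint eigenequation. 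The elementary bound $\int_0^\infty b(A_t(x))\,e^{-\int_0^t b(A_s(x))\,\dif s}\,\dif t\le1$ gives $\|\mathcal{T}\phi\|_\infty\le 2\|\phi\|_\infty$; combined with the fact that $\int_0^z\dif w/g(w)=+\infty$ for $z\in(0,\mmax)$ (so that $t\mapsto A_t(x)$ is a $C^1$‑diffeomorphism of $(0,\mmax)$, cf.\ the discussion following Assumptions~\ref{hypotheses}), this shows $\mathcal{T}$ maps $C[0,\mmax]$, and its non‑negative cone, into itself. Moreover Fubini together with the change of variable $z=A_t(y)$, carried out exactly as in the proof of Corollary~\ref{eigenelements}, shows that $\mathcal{T}$ is the $L^2(0,\mmax)$‑adjoint of the operator
\[
  \mathcal{S}\Psi(x)
  =
  2\int_0^\infty\!\!\int_0^\mmax
     \frac{b(A_t(y))}{A_t(y)}\,q\!\left(\tfrac{x}{A_t(y)}\right)\Psi(y)\,
     e^{-\int_0^t(\Lambda+D+b(A_s(y)))\,\dif s}\,\dif y\;\dif t
\]
that appears in Theorem~\ref{lemme.psi}.

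Next I would invoke the Krein--Rutman theorem. The operator $\mathcal{S}$ is compact on $C[0,\mmax]$ — equicontinuity of $\{\mathcal{S}\Psi:\|\Psi\|_\infty\le1\}$ is precisely what Assumption~\ref{hypotheses}-\ref{hyp.equicontinue} is made for, while uniform boundedness comes from Assumption~\ref{hypotheses}-\ref{hyp.int.finie} — and it is positive and irreducible; hence $r(\mathcal{S})$ is the only eigenvalue of $\mathcal{S}$ admitting a non‑negative eigenfunction, so $\mathcal{S}\Psi=\Psi$ with $0\ne\Psi\ge0$ forces $r(\mathcal{S})=1$. Since adjoint operators have the same spectrum, $r(\mathcal{T})=r(\mathcal{S})=1$, and as $\mathcal{T}$ is itself compact and positive, Krein--Rutman provides a non‑negative $\phi\not\equiv0$ with $\mathcal{T}\phi=\phi$. (Alternatively one applies Krein--Rutman to $\mathcal{S}^*$ on $\mathcal{M}[0,\mmax]$, obtains an eigenmeasure $\mu$, and observes from the explicit formula that $\mathcal{S}^*\mu$ always possesses a Lebesgue density, which forces $\mu=\phi(x)\,\dif x$ with $\mathcal{T}\phi=\phi$ and $\phi\in C[0,\mmax]$ by the bounds above.)

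Finally, the change of variable $z=A_t(x)$ rewrites $\phi=\mathcal{T}\phi$ as
\[
  \phi(x)=2\int_x^\mmax \frac{b(z)}{g(z)}\,
     e^{-\int_x^z\frac{\Lambda+D+b(w)}{g(w)}\,\dif w}
     \int_0^1 q(\alpha)\,\phi(\alpha z)\,\dif\alpha\;\dif z ,
\]
whose right‑hand side is $C^1$ in $x$ on $(0,\mmax)$; differentiating gives
\[
  g(x)\,\partial_x\phi(x)=(\Lambda+D+b(x))\,\phi(x)-2\,b(x)\int_0^1 q(\alpha)\,\phi(\alpha x)\,\dif\alpha ,
\]
that is, $\op^*\phi=\Lambda\phi$, so $\phi\in C[0,\mmax]\cap C^1(0,\mmax)$ solves \eqref{ann.eq.eigenproblem.adjoint}. (The same identity also follows by substituting $x\mapsto A_h(x)$ in $\phi=\mathcal{T}\phi$, using $A_{t+h}(x)=A_t(A_h(x))$ and differentiating at $h=0$.) The argument of Proposition~\ref{da.prop.hat_v.positive} now gives $\phi>0$ on $(0,\mmax)$, hence $\int_0^\mmax u(x)\,\phi(x)\,\dif x>0$ since $u\ge0$ is not identically zero; normalising $\phi$ so that $\int_0^\mmax u\,\phi\,\dif x=1$ finishes the proof.

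The substantive work is the Krein--Rutman step: proving compactness and irreducibility of $\mathcal{T}$ (and of $\mathcal{S}$) under Assumptions~\ref{hypotheses}, and above all pinning the spectral radius at $1$ by relating it, through the duality $\mathcal{T}=\mathcal{S}^*$, to the selection of $\Lambda$ in Theorem~\ref{lemme.psi}. A second delicate point is the regularity of $\phi$ up to the endpoints $0$ and $\mmax$, which rests on the non‑integrability of $1/g$ there together with Assumption~\ref{hypotheses}-\ref{hyp.int.finie}.
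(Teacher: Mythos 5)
Your proposal works with the unregularized operators directly, whereas the paper carefully regularizes first: it introduces $G^{\varepsilon,*}_\lambda$, which has kernel bounded below by $2\varepsilon/\mmax>0$ and is therefore \emph{strictly} positive on the cone of nonnegative continuous functions. Krein--Rutman then yields, for every $\varepsilon>0$, a unique eigenpair $(\tilde\mu^\varepsilon_\lambda,\phi^\varepsilon_\lambda)$ with $\phi^\varepsilon_\lambda>0$; the identity $\tilde\mu^\varepsilon_\lambda=\mu^\varepsilon_\lambda$ is obtained from $\int_0^\mmax \phi^\varepsilon_\lambda\,G^\varepsilon_\lambda N^\varepsilon_\lambda=\int_0^\mmax G^{\varepsilon,*}_\lambda\phi^\varepsilon_\lambda\,N^\varepsilon_\lambda$, which is airtight precisely because both eigenfunctions are strictly positive, so the pairing integral is nonzero; finally one takes $\varepsilon\to0$ using the equicontinuity estimates of Lemma~\ref{lemme.hyp.aa.adjoint} and the integrability Assumption~\ref{hypotheses}-\ref{hyp.int.finie}.

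There is a concrete gap in your argument where you write ``it is positive and irreducible; hence $r(\mathcal{S})$ is the only eigenvalue admitting a non-negative eigenfunction, so $\mathcal{S}\Psi=\Psi$ with $0\neq\Psi\geq0$ forces $r(\mathcal{S})=1$.'' Irreducibility of $\mathcal{S}$ is asserted but not established, and under the paper's hypotheses it is not obvious: $b$ vanishes identically on $[0,\mdiv]$ and the paper assumes only that $q$ is symmetric, continuous and vanishes at $0$ and $1$, not that $q>0$ on the whole of $(0,1)$. Without irreducibility you only get $1\leq r(\mathcal{S})$, not equality, and the subsequent step $r(\mathcal{T})=r(\mathcal{S})$ also needs care: $\mathcal{T}$ is the $L^2$-pairing dual of $\mathcal{S}$, not its Banach-space adjoint on $C[0,\mmax]$, so the ``adjoints have the same spectrum'' reflex does not apply directly. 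These are exactly the two points the paper's $\varepsilon$-regularization is designed to circumvent. If you wish to avoid the regularization, you would need to add a hypothesis forcing $q>0$ on $(0,1)$ (so the iterated kernel is eventually strictly positive on $(0,\mmax)^2$) and either prove that $\Psi>0$ on $(0,\mmax)$ (so $\int\Psi\phi>0$ pins $r(\mathcal{T})=1$) or pass through a compact embedding of $C[0,\mmax]$ into $L^2$ to transfer spectral information; as written, these steps are missing. The closing change-of-variables computation, the $C^1$-regularity on $(0,\mmax)$, and the normalization via Proposition~\ref{da.prop.hat_v.positive} are, on the other hand, exactly what the paper does.
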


\begin{lemma}
\label{appendix.prop.directes}
Under Assumptions \ref{hypotheses}, the following properties hold:
\begin{enumerate}
\item \label{i} There exists $C_{bq}>0$ such that for any $x,y\in [0,\mmax]$:
\begin{align*}
 \frac{b(y)}{y} \, q\left( \frac{x}{y}\right) \leq C_{bq} \, .
\end{align*} 
\item \label{b_sur_x_borne} There exists $C>0$ such that for any $x \in [0,\mmax]$ 
$$\frac{b(x)}{x} \leq C \,.$$
\end{enumerate}
\end{lemma}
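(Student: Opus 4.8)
The plan is to deduce both bounds from the uniform equicontinuity hypothesis (Assumption \ref{hypotheses}-\ref{hyp.equicontinue}), using crucially that $q$ vanishes at the endpoints of $[0,1]$.

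For item \ref{i}, I would exploit that every member of the family $\bigl(x\mapsto\frac{b(y)}{y}q(x/y)\bigr)_y$ vanishes at $x=0$: since $q(0)=0$ by the first part of Assumptions \ref{hypotheses}, we have $\frac{b(y)}{y}q(0/y)=0$ for all $y$. Applying the equicontinuity inequality with $x_1=x$ and $x_2=0$ then gives
\[
  0\leq\frac{b(y)}{y}\,q\!\left(\frac{x}{y}\right)
  =\left|\frac{b(y)}{y}\,q\!\left(\frac{x}{y}\right)-\frac{b(y)}{y}\,q(0)\right|
  \leq\omega(x)\,,\qquad\forall\,x,y\in[0,\mmax]\,.
\]
Since $\omega\in C(\RR^+)$ it is bounded on the compact $[0,\mmax]$, so it suffices to take $C_{bq}:=\sup_{[0,\mmax]}\omega<\infty$.

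For item \ref{b_sur_x_borne}, the idea is to evaluate the bound of item \ref{i} at a point $x$ where $q(x/y)$ is bounded away from $0$ uniformly in $y$. First I would record that $q$ is continuous on $[0,1]$: the interval $(\mdiv,M)$ is nonempty since $\mdiv<M$, so fixing $y_0\in(\mdiv,M)$ we have $b(y_0)>0$ by Assumptions \ref{da.hypo.model.reduit}; dividing the equicontinuity inequality for $y=y_0$ by the positive constant $b(y_0)/y_0$ shows that $\alpha\mapsto q(\alpha)$ is uniformly continuous on $[0,\mmax/y_0]\supseteq[0,1]$ (here one uses $\omega(0)=0$). As $q$ is a probability density with $q(0)=q(1)=0$, there exists $\alpha^*\in(0,1)$ with $q(\alpha^*)>0$. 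Then for every $y\in(0,\mmax]$ the point $x:=\alpha^* y$ lies in $(0,\mmax)$ because $\alpha^*<1$, and item \ref{i} applied at $(x,y)$ reads $\frac{b(y)}{y}\,q(\alpha^*)\leq C_{bq}$, whence $\frac{b(y)}{y}\leq C_{bq}/q(\alpha^*)=:C$; the value at $x=0$ is understood via the convention $b(0)=0$.

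I do not anticipate any real obstacle. The only step requiring a little care is justifying that one may use the pointwise value $q(\alpha^*)$ and obtain a strictly positive lower bound that is uniform in $y$ — this is exactly supplied by the continuity of $q$ on $[0,1]$, itself a by-product of Assumption \ref{hypotheses}-\ref{hyp.equicontinue}, together with the constraint $\alpha^* y\leq\mmax$, which is automatic since $\alpha^*<1$.
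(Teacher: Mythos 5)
Your proof is correct and follows essentially the same route as the paper: both items are obtained by specializing the uniform equicontinuity estimate of Assumption \ref{hypotheses}-\ref{hyp.equicontinue} at $x_2=0$ and using $q(0)=0$, with item \ref{b_sur_x_borne} obtained by evaluating at a point where $q$ is bounded away from zero (the paper picks $\alpha_{\max}$, the argmax of $q$, and re-applies the equicontinuity bound directly, whereas you pick any $\alpha^*$ with $q(\alpha^*)>0$ and invoke item \ref{i}; these are cosmetic variants of the same idea).
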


\begin{proof}
Point \ref{i} is a direct consequence of Assumption \ref{hypotheses}-\ref{hyp.equicontinue}. In fact, for $x_2=0$, we get
$$
	\frac{b(y)}{y} \, q\left( \frac{x_1}{y}\right)	
	\leq
	\omega(x_1) \leq C_{bq} \, .
$$

For Point \ref{b_sur_x_borne}, by assumption \ref{hypotheses}-\ref{hyp.equicontinue}, $q$ is continuous on $[0,1]$, so it reaches its maximum for $\alpha_{\max} \in [0,1]$. From Assumption \ref{hypotheses}-\ref{hyp.equicontinue}, for $x_2=0$,
\begin{align*}
\norme{q}_\infty \,\frac{b(x)}{x}
	&=
\frac{b(x)}{x}q\left(\frac{\alpha_{\max}\,x}{x}\right)
	\leq
		\omega(\alpha_{\max}\,x)
\end{align*}
As $\lim_{x \to 0}\omega(\alpha_{\max}\,x)=0$, there exists $C>0$ such that for any $x$, $\frac{b(x)}{x}\leq C$.
\end{proof}

\bigskip

\begin{remark}
\begin{enumerate}
\item Assumption \ref{hypotheses}-\ref{hyp.equicontinue} holds, for example, if $0<\mdiv$
and if for any $\alpha_1,\alpha_2 \in [0,1]$, $q(\alpha_1)-q(\alpha_2) \leq C\,|\alpha_1-\alpha_2|^\beta$, $\beta > 0$. In this case,
$$
	\left|
	\frac{\taudiv(y)}{y} \, q\left( \frac{x_1}{y}\right)
	-
	\frac{\taudiv(y)}{y} \, q\left( \frac{x_2}{y}\right)
	\right|	
	\leq
	C\frac{\taudiv(y)}{y^{\beta+1}}\,|x_1-x_2|^\beta 
	\leq
	C\frac{\bar\taudiv}{{\mdiv}^{\beta+1}}\,|x_1-x_2|^\beta\, .
$$

\item Assumptions \ref{da.hypo.model.reduit}-\ref{hyp.g} and \ref{hypotheses}-\ref{hyp.g.2} hold, for example, for a Gompertz function:
$$
	g(x) = a \, \log\left(\mmax/x\right)\,x \, .
$$	
Indeed, let $x<y$, then
\begin{align*}
g(x)-g(y)
	&=
		a \, \left( x\, \left(
							\ln(y)-\ln(x)
						\right)
					+
					(x-y)\,\ln\left(\textstyle \frac{M}{y}\right)
			 \right)\,.
\end{align*}
Since $x(\ln(y)-\ln(x))\leq y-x$ and $\ln\left(\textstyle \frac{M}{y}\right)<\ln\left(\textstyle \frac{M}{y-x}\right)$, Assumption \ref{hypotheses}-\ref{hyp.g.2} holds for
$F(x)\eqdef a\, x + g(x)$.
\end{enumerate}
\end{remark}

\begin{proposition}
\label{prop.hyp.int.finie}
Assumption \ref{hypotheses}-\ref{hyp.int.finie} holds if there exists $0<m < \mmax$ such that $\inf_{x \in [m,\mmax]}b(x)>0$ and there exist $0<\varepsilon<1$ and $a>0$ such that $g(x) \geq a \, x^{1+\varepsilon}$ in a neighborhood of 0.
\end{proposition}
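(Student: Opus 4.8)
The goal is to bound the double integral in Assumption \ref{hypotheses}-\ref{hyp.int.finie}, and the key structural fact is that the flow $t\mapsto A_t(y)$ is non-decreasing (since $g\geq 0$), so once $A_t(y)$ has reached the threshold $m$ it stays in $[m,\mmax]$, where $b$ is bounded below by a positive constant. I would first set $\beta_0 \eqdef \inf_{x\in[m,\mmax]} b(x)$, which is a minimum of a continuous function over a compact set and is positive by hypothesis. Splitting the $y$-integral at $m$, for $y\in[m,\mmax]$ monotonicity gives $A_s(y)\geq m$ for all $s\geq0$, hence $b(A_s(y))\geq\beta_0$ and $e^{-\int_0^t b(A_s(y))\,\dif s}\leq e^{-\beta_0 t}$; the contribution of this range is at most $(\mmax-m)/\beta_0<\infty$.

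For $y\in(0,m)$ I would use the hitting time $T(y)\eqdef t(y,m)$ defined by \eqref{temps.atteinte.masse}. It is finite and equal to $\int_y^m \dif z/g(z)$ by the change of variable $z=A_s(y)$, because $g$ is continuous and strictly positive on the compact interval $[y,m]$ (here we use $m<\mmax$ and Assumption \ref{da.hypo.model.reduit}-\ref{g.0.M}). Bounding the exponential by $1$ for $t\leq T(y)$ and, for $t\geq T(y)$, using that $A_s(y)\geq m$ for $s\geq T(y)$ to get $e^{-\int_0^t b(A_s(y))\,\dif s}\leq e^{-\beta_0(t-T(y))}$, one obtains
\begin{align*}
  \int_0^\infty e^{-\int_0^t b(A_s(y))\,\dif s}\,\dif t \leq T(y)+\frac{1}{\beta_0}\,,
\end{align*}
so that the whole matter reduces to proving $\int_0^m T(y)\,\dif y<\infty$.

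Finally this is where the growth condition near $0$ enters, and it is the only genuinely delicate point: one must check that the time the flow spends in the region where $b$ may vanish is integrable in the starting mass $y$. Fix $\delta>0$ with $g(x)\geq a\,x^{1+\varepsilon}$ on $(0,\delta]$. On $[\delta,m]$ the function $g$ is continuous and bounded below by a positive constant, so $T(\delta)=\int_\delta^m \dif z/g(z)<\infty$ and $y\mapsto T(y)$ is bounded on $[\delta,m]$. For $y\in(0,\delta)$,
\begin{align*}
  T(y)=\int_y^\delta\frac{\dif z}{g(z)}+\int_\delta^m\frac{\dif z}{g(z)}
  \leq \int_y^\delta\frac{\dif z}{a\,z^{1+\varepsilon}}+T(\delta)
  = \frac{1}{a\,\varepsilon}\bigl(y^{-\varepsilon}-\delta^{-\varepsilon}\bigr)+T(\delta)\,,
\end{align*}
and since $\varepsilon<1$ the bound $y^{-\varepsilon}$ is integrable on $(0,\delta)$, so $\int_0^\delta T(y)\,\dif y<\infty$. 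Adding the three finite contributions (the range $y\in[m,\mmax]$, the term $m/\beta_0$ coming from the $1/\beta_0$ above on $y\in(0,m)$, and $\int_0^m T(y)\,\dif y$) yields Assumption \ref{hypotheses}-\ref{hyp.int.finie}. The constraint $\varepsilon<1$ is exactly what is needed to control the hitting times of masses close to $0$.
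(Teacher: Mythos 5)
Your proof is correct and follows essentially the same route as the paper: split the $y$-integral at the threshold $m$, bound the time-integral by the hitting time $t(y,m)$ plus $1/b_*$, and reduce to the integrability of $y\mapsto t(y,m)$ near $0$, which is controlled by the lower bound $g(x)\geq a\,x^{1+\varepsilon}$ and $\varepsilon<1$. The only cosmetic difference is that you express the hitting time directly as $\int_y^m \dif z/g(z)$ via a change of variable and bound the integrand, whereas the paper obtains the same estimate by comparing with the solution of $\dot x = a\,x^{1+\varepsilon}$; both give $t(y,m)\lesssim y^{-\varepsilon}$ and hence the required integrability.
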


In particular, Assumption \ref{hypotheses}-\ref{hyp.int.finie} holds for a Gompertz function:
$$g(x) = a\, x \, \log (\mmax / x)$$
or if $g$ is differentiable in 0 such that $g'(0)>0$.

\begin{proof}
Let $b_*\eqdef \inf_{x \in [m,\mmax]}b(x)>0$ and recall the definition \eqref{temps.atteinte.masse} for $t(x,m)$. Then for any $y\leq m$
\begin{align*}
\int_0^\infty 
	e^{-\int_0^t b(A_s(y))\, \dif s} \,\dif t 
& =
	\int_0^{t(y,m)} 
		e^{-\int_0^t b(A_s(y))\, \dif s} \,\dif t
	+ 			
	\int_{t(y,m)}^\infty 
		e^{-\int_0^t b(A_s(y))\, \dif s} \,\dif t
\\
	& \leq
	\int_0^{t(y,m)} 
		e^{-\int_0^t b(A_s(y))\, \dif s} \,\, \dif t
	+ 			
	\frac{1}{b_*}\,.
\end{align*}
Let $0<\delta<m$ be fixed. For any $\delta \leq y\leq m$, we have $t(y,m)\leq t(\delta,m)<\infty$, therefore
\begin{align*}
\int_0^\mmax \int_0^\infty  
		e^{-\int_0^t b(A_s(y))\, \dif s} \,
		\dif t\,\dif y
&=
	\int_0^\delta \int_0^\infty  
		e^{-\int_0^t b(A_s(y))\, \dif s} \,
		\dif t\,\dif y
	+
	\int_\delta^m \int_0^\infty  
		e^{-\int_0^t b(A_s(y))\, \dif s} \,
		\dif t\,\dif y
\\
&\quad
	+\int_m^\mmax \int_0^\infty  
		e^{-\int_0^t b(A_s(y))\, \dif s} \,
		\dif t\,\dif y
\\
&\leq 
	\int_0^\delta \int_0^{t(y,m)} 
		e^{-\int_0^t b(A_s(y))\, \dif s} \,
		\dif t\,\dif y
	+
	(m-\delta)\,t(\delta,m) 
	+\mmax\,\frac{1}{b_*}
\\
&\leq 
	\int_0^\delta t(\delta,m) \,\dif y
	+
	m\,t(\delta,m) 
	+\mmax\,\frac{1}{b_*}\,.
\end{align*}
It is then sufficient to prove that, for a fixed $\delta>0$,
$$
\int_0^\delta t(y,\delta) \,\dif y
		< \infty \,.
$$
Let $x$ be a solution of
$$
	\dot x(t) \geq a\,x(t)^{1+\varepsilon}
$$
then
$$
	x(t) \geq \frac{x(0)}{(1-\varepsilon\,a\,t\,x(0)^\varepsilon)^{1/\varepsilon}}
$$
and
$$
	x(t)\geq \delta 
	\text{ if }
	t \geq \frac{1}{\varepsilon\,a} \, 
		\left(
			\frac{1}{x(0)^\varepsilon}
			-
			\frac{1}{\delta^\varepsilon}
		\right)\,.
$$
We choose $\delta$ such that $g(x)\geq a \, x^{1+\varepsilon}$ for any $x\leq \delta$. Hence
\begin{align*}
\int_0^\delta t(y,\delta)\,\dif y
	&\leq
		\frac{1}{\varepsilon\,a} \, 
		\int_0^\delta
			\left(
			\frac{1}{y^\varepsilon}
			-
			\frac{1}{\delta^\varepsilon}
		\right) \,\dif y
		=
		\frac{1}{\varepsilon\,a} \, 
		\left(
		\int_0^\delta
			\frac{1}{y^\varepsilon} \,\dif y
			-
			\delta^{1-\varepsilon}
		\right)
		<\infty \,. 
\qedhere
\end{align*}
\end{proof}

\medskip

\begin{lemma}
\label{lemme.flot.equicontinu}
Under Assumptions \ref{da.hypo.model.reduit}-\ref{hyp.g} and \ref{hypotheses}-\ref{hyp.g.2}, for any $T>0$, there exists $\omega_T \in C(\RR^+)$ such that $\lim_{x\to 0} \omega_T(x)=0$ and for any $x_1,x_2 \in [0,\mmax]$, $0\leq t\leq T$:
$$
	|A_t(x_1)-A_t(x_2)| \leq \omega_T(|x_1-x_2|)\,.
$$
In the same way, for any $T>0$, there exists $\tilde \omega_T \in C(\RR^+)$ such that $\lim_{x\to 0} \tilde \omega_T(x)=0$ and for any $x_1,x_2 \in [0,\mmax]$, $0\leq t\leq T$:
$$
	|A_t^{-1}(x_1)-A_t^{-1}(x_2)| \leq \tilde \omega_T(|x_1-x_2|)\,,
$$
where $x \mapsto A_t^{-1}(x)$ is the inverse function of $x \mapsto A_t(x)$.
\end{lemma}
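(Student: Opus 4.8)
The plan is a standard Osgood-type continuous-dependence estimate. First I would record that $F$ is strictly positive on $(0,\infty)$: if $F$ vanished at some $r_0>0$ then, being non-decreasing, $F\equiv 0$ on $[0,r_0]$, and Assumption \ref{hypotheses}-\ref{hyp.g.2} would force $g$ to be locally constant, hence constant on $[0,\mmax]$, contradicting Assumption \ref{da.hypo.model.reduit}-\ref{g.0.M}. Consequently $\Phi(r):=\int_r^{\mmax}\dif s/F(s)$ is well defined and is a decreasing $C^1$-bijection from $(0,\mmax]$ onto $[0,\infty)$: indeed $\Phi(\mmax)=0$, and $\Phi(r)\to\infty$ as $r\to 0^+$ because $\int_0^1\dif s/F(s)=\infty$.

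Next I would fix $T>0$ and $x_1\le x_2$ in $[0,\mmax]$ and set $\delta=x_2-x_1$, $\rho(t)=A_t(x_2)-A_t(x_1)$. Since $g\in C^1(0,\mmax)$, two trajectories starting from distinct points of $(0,\mmax)$ stay distinct and in $(0,\mmax)$ for all $t\ge 0$ (the flow being globally defined and remaining in $[0,\mmax]$, see the paragraph following Assumptions \ref{hypotheses}), while $A_t(0)\equiv 0$ and $A_t(\mmax)\equiv\mmax$; hence $\rho(t)\ge 0$ for all $t\ge 0$, $\rho$ is $C^1$ in $t$, and $\rho(t)>0$ for all $t\ge 0$ when $\delta>0$. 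Moreover, by Assumption \ref{hypotheses}-\ref{hyp.g.2},
\[
  \dot\rho(t)=g(A_t(x_2))-g(A_t(x_1))\le |g(A_t(x_2))-g(A_t(x_1))|\le F(\rho(t)),
\]
and this estimate holds verbatim in the boundary cases $x_1=0$ or $x_2=\mmax$ (writing, e.g., $g(A_t(x_2))=g(A_t(x_2))-g(0)$).

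It remains to integrate. Assuming $\delta>0$ (the case $\delta=0$ being trivial), from $\dot\rho\le F(\rho)$ and $\rho>0$ we get $\tfrac{\dif}{\dif t}\Phi(\rho(t))=-\dot\rho(t)/F(\rho(t))\ge -1$, hence $\Phi(\rho(t))\ge\Phi(\delta)-t$ for $t\in[0,T]$, and applying the decreasing map $\Phi^{-1}$ gives $\rho(t)\le\Phi^{-1}(\Phi(\delta)-T)$ whenever $\Phi(\delta)>T$. Setting
\[
  \omega_T(\delta):=\Phi^{-1}\bigl(\max(\Phi(\delta)-T,\,0)\bigr)\text{ for }\delta\in(0,\mmax],\qquad \omega_T(0):=0,\qquad \omega_T(\delta):=\mmax\text{ for }\delta>\mmax,
\]
one checks — using $\Phi^{-1}(0)=\mmax$, $\lim_{u\to\infty}\Phi^{-1}(u)=0$, and the trivial bound $\rho(t)\le\mmax$ that covers the case $\Phi(\delta)\le T$ — that $\omega_T\in C(\RR^+)$, that $\lim_{x\to 0}\omega_T(x)=0$, and that $|A_t(x_1)-A_t(x_2)|=\rho(t)\le\omega_T(|x_1-x_2|)$ for $0\le t\le T$. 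Finally, the estimate for $x\mapsto A_t^{-1}(x)$ follows from the identical argument applied to the backward flow: $A_t^{-1}(x)$ solves $\dot y=-g(y)$, the function $-g$ has the same modulus of continuity $F$, the backward flow is globally defined and remains in $[0,\mmax]$ (again the paragraph following Assumptions \ref{hypotheses}), and the equilibria $0,\mmax$ are unchanged, so one obtains a modulus $\tilde\omega_T$ with the stated properties.

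The step I expect to require the most care is purely the bookkeeping: confirming that $\dot\rho\le F(\rho)$ genuinely holds in each configuration (two interior trajectories, and the cases where one endpoint is the equilibrium $0$ or $\mmax$), and checking that the function $\omega_T$ built above is continuous on $\RR^+$ — in particular at $\delta=0$ and at the gluing point $\delta=\mmax$. The only substantive input is the blow-up of $\Phi$ at the origin, which is nothing but a restatement of $\int_0^1\dif s/F(s)=\infty$ and is exactly what forces $\omega_T(0)=0$.
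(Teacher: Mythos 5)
Your proof is correct and takes essentially the same Osgood-type route as the paper, so this is a small comparison rather than a gap report. The paper sets $H(x)=\int_1^x \dif s/F(s)$ and compares $|A_t(x_1)-A_t(x_2)|$, via the integral inequality $u(t)\le u(0)+\int_0^t F(u(s))\,\dif s$, against the supersolution $\varphi_t=H^{-1}\bigl(t+H(2|x_1-x_2|)\bigr)$ of $\dot\varphi=F(\varphi)$; the factor $2$ is the usual device to make the initial comparison strict so the Osgood comparison lemma applies. You instead integrate the differential inequality $\dot\rho\le F(\rho)$ directly through $\Phi(r)=\int_r^{\mmax}\dif s/F(s)$, using that $\rho(t)>0$ (trajectories starting at distinct points of $(0,\mmax)$ never cross, and $0,\mmax$ are fixed points) to justify dividing by $F(\rho)$, which lets you bypass the comparison lemma and the factor $2$ entirely. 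Your write-up is also a bit more explicit on two minor points the paper leaves implicit: the boundary configurations $x_1=0$ or $x_2=\mmax$, and the definition of $\omega_T$ when $\Phi(\delta)\le T$ (without the cap at $\mmax$, the paper's expression $H^{-1}(T+H(2\delta))$ is not literally defined for all $\delta\le\mmax$, though of course the trivial bound $\rho\le\mmax$ fixes this). The treatment of the inverse flow is identical in both. In short: same key idea (antiderivative of $1/F$ plus $\int_0^1\dif s/F(s)=\infty$ to force a modulus vanishing at $0$), with your version phrased differentially and the paper's integrally.
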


\begin{proof}
We set
$$
	H(x) = \int_1^x \frac{1}{F(y)}\,\dif y \, .
$$
The function $H$ is non-decreasing and $H(0)=-\infty$.
Let
$$
	\varphi_t \eqdef H^{-1}(t+H(2\,|x_1-x_2|))\,.
$$
Then $\varphi$ is solution of
$$
	\varphi_t = 2\,|x_1-x_2|+\int_0^t F(\varphi_u)\,\dif u\,.
$$

Let $x_1\neq x_2 \in (0,\mmax)$:
\begin{align*}
|A_t(x_1)-A_t(x_2)|
	&\leq
		|x_1-x_2| + \int_0^t |g(A_u(x_1)-g(A_u(x_2))|\,\dif u
\\
	& \leq
		|x_1-x_2| + \int_0^t F(|A_u(x_1)-A_u(x_2)|)\,\dif u\,.
\end{align*} 
Hence,
\begin{align*}
|A_t(x_1)-A_t(x_2)|
	& \leq
		\varphi_t
	\leq 
	H^{-1}(T+H(2\,|x_1-x_2|)) =: \omega_T(|x_1-x_2|) \,.
\end{align*}
We proceed by the same way with the inverse flow defined by 
\begin{align*}
	A_t^{-1}(x) & = x - \int_0^t g(A_u^{-1}(x))\,\dif u \,. 
\qedhere
\end{align*}
\end{proof}

\subsection{Proof of Theorem \ref{lemme.psi}}

In order to prove Theorems \ref{lemme.psi} and \ref{th.pb.adjoint}, we use the same approach as in \cite{Doumic2007a}. In both proofs, we set $D=0$ without loss of generality to simplify notations.

\subsubsection{Regularized problem}

We set 
$$
	b_\varepsilon(x) = b(x)+\varepsilon \, .
$$	
We consider the Banach space $$E = C[0,\mmax]$$
equipped with the norm $\norme{.}_\infty$.

For any $\lambda\geq0$, $\varepsilon\geq 0$, let $G^\varepsilon_\lambda$ be the operator defined for any $f\in E$ by
\begin{align}
\label{def.G.regul}
 G^\varepsilon_\lambda f(x)
 	=
 		2\,\int_0^\infty \int_0^\mmax
 		\left[
 			\frac{b(A_t(y))}{A_t(y)}\,q\left(\frac{x}{A_t(y)} \right)
 			+\frac{\varepsilon}{\mmax}
 		\right]\,f(y)\,
 		e^{-\int_0^t (\lambda+b_\varepsilon(A_s(y) )\,\dif s}\,
 		\dif y\, \dif t \,.
\end{align}

\begin{lemma}
\label{lemme.hyp.aa}
\begin{enumerate}
\item \label{equibornitude} For any $\lambda \geq 0$, $\varepsilon \geq 0$, $f\in E$, we have $G^\varepsilon_\lambda\, f\in E$ and
\begin{align*}
	\norme{G^\varepsilon_\lambda\, f}_\infty 
	 \leq 2\,\left(C_{bq}+\frac{\varepsilon}{\mmax}\right)\,\norme{f}_\infty \,
	 	\int_0^\infty \int_0^\mmax
 			e^{-\int_0^t b(A_s(y)) \,\dif s}\,
 			\dif y\, \dif t \, .
\end{align*}

\item \label{equicontinuite} 
For any $\lambda \geq 0$, $\varepsilon \geq 0$, $f\in E$, $x_1,x_2 \in [0,\mmax]$: 
\begin{align*}
\left | G^\varepsilon_\lambda f(x_1) -  G^\varepsilon_\lambda f(x_2) \right|
 	&\leq
 		2\, \omega(|x_1-x_2|)\,\norme{f}_\infty \,
 		\int_0^\infty 
 		\int_{0}^\mmax		
 		\,e^{-\int_0^t b(A_s(y))\,\dif s}\,\dif y\, \dif t\, .
\end{align*}
\end{enumerate}
In particular, $G^\varepsilon_\lambda$ is compact on $E$.
\end{lemma}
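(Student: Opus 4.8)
The plan is to read off both estimates directly from the explicit formula \eqref{def.G.regul} and then close with Arzel\`a--Ascoli. Throughout, after taking absolute values the integrands are non-negative, so Tonelli's theorem legitimizes all interchanges of integration.

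For \ref{equibornitude}: in \eqref{def.G.regul} bound the bracketed kernel by $C_{bq}+\varepsilon/\mmax$ using Lemma \ref{appendix.prop.directes}-\ref{i}, bound $|f(y)|$ by $\norme{f}_\infty$, and bound the exponential by $e^{-\int_0^t b(A_s(y))\,\dif s}$, which is valid since $\lambda\geq0$ and $b_\varepsilon=b+\varepsilon\geq b\geq 0$. This yields exactly the stated inequality, and the remaining double integral $\int_0^\infty\int_0^\mmax e^{-\int_0^t b(A_s(y))\,\dif s}\,\dif y\,\dif t$ is finite by Assumption \ref{hypotheses}-\ref{hyp.int.finie}. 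The membership $G^\varepsilon_\lambda f\in E$ (i.e.\ continuity on $[0,\mmax]$) then follows from the modulus-of-continuity estimate proved next.

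For \ref{equicontinuite}: subtract $G^\varepsilon_\lambda f(x_1)$ and $G^\varepsilon_\lambda f(x_2)$; the constant term $\varepsilon/\mmax$ disappears since it does not depend on $x$. Apply Assumption \ref{hypotheses}-\ref{hyp.equicontinue} with the variable $y$ there replaced by $A_t(y)\in[0,\mmax]$ to get
\[
  \left|\frac{b(A_t(y))}{A_t(y)}\,q\!\left(\frac{x_1}{A_t(y)}\right)-\frac{b(A_t(y))}{A_t(y)}\,q\!\left(\frac{x_2}{A_t(y)}\right)\right|\leq\omega(|x_1-x_2|),
\]
then bound $|f(y)|\leq\norme{f}_\infty$ and the exponential by $e^{-\int_0^t b(A_s(y))\,\dif s}$ as before. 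This gives the claimed bound, and since $\omega\in C(\RR^+)$ with $\omega(0)=0$, each $G^\varepsilon_\lambda f$ is uniformly continuous, completing \ref{equibornitude}.

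Finally, $G^\varepsilon_\lambda$ is manifestly linear in $f$ and bounded by \ref{equibornitude}; by \ref{equibornitude}--\ref{equicontinuite} it maps the closed unit ball of $E$ into a subset of $C[0,\mmax]$ that is uniformly bounded and uniformly equicontinuous (the moduli depending only on $\varepsilon$, not on $\lambda$ or on $f$ in the unit ball), hence relatively compact by the Arzel\`a--Ascoli theorem; therefore $G^\varepsilon_\lambda$ is compact on $E$. There is no genuine difficulty here: the only point requiring care is that every bound must be uniform as the ``hidden'' argument $A_t(y)$ ranges over $[0,\mmax]$ while $(t,y)$ ranges over $\RR_+\times[0,\mmax]$, which is precisely what Assumptions \ref{hypotheses}-\ref{hyp.equicontinue} and \ref{hypotheses}-\ref{hyp.int.finie} are designed to furnish.
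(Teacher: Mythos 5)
Your proposal is correct and follows essentially the same route as the paper: bound the kernel via Lemma \ref{appendix.prop.directes}-\ref{i}, dominate $e^{-\int_0^t(\lambda+b_\varepsilon(A_s(y)))\,\dif s}$ by $e^{-\int_0^t b(A_s(y))\,\dif s}$, invoke Assumption \ref{hypotheses}-\ref{hyp.int.finie} for integrability, and for equicontinuity use the cancellation of the $\varepsilon/\mmax$ term together with Assumption \ref{hypotheses}-\ref{hyp.equicontinue} applied at $A_t(y)$. The paper merely declares Point~\ref{equibornitude} trivial and leaves the Arzel\`a--Ascoli conclusion unstated; you have filled in those routine details, which is fine.
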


\begin{proof}
Point \ref{equibornitude} is trivial.
Let $x_1,\,x_2\in[0,\mmax]$, from Assumption \ref{hypotheses}-\ref{hyp.equicontinue},
\begin{align*}
& \left | G^\varepsilon_\lambda f(x_1) -  G^\varepsilon_\lambda f(x_2) \right|
\\
 	& \qquad \leq
 		2\,\int_0^\infty \int_0^\mmax 		
 			\frac{b(A_t(y))}{A_t(y)}\,
 			\left|
 				q\left(\frac{x_1}{A_t(y)} \right)
 				-
 				q\left(\frac{x_2}{A_t(y)} \right)
 			\right|
 			\,f(y)\,
 		e^{-\int_0^t (\lambda + b_\varepsilon(A_s(y) )\,\dif s}\,
 		\dif y\, \dif t
 \\
 	&\qquad \leq
 		2\, \omega(|x_1-x_2|)\,\norme{f}_\infty \,
 		\int_0^\infty 
 		\int_0^\mmax		
 			e^{-\int_0^t b(A_s(y))\,\dif s}\,\dif y\, \dif t \, .
\qedhere
\end{align*}
\end{proof}

\medskip

For any $\varepsilon>0$, $f\in C[0,\mmax]$ such that $f\geq 0$ and $f \neq 0$, we have
\begin{align}
\label{G.positif}
 G^\varepsilon_\lambda f(x)
 	& \geq
 		2\,\frac{\varepsilon}{\mmax} \,
 		\int_0^\infty \int_0^\mmax
 		f(y)\,
 		e^{-\int_0^t (\lambda+b_\varepsilon(A_s(y) )\,\dif s}\,
 		\dif y\, \dif t 
 		>0\,.
\end{align}

Krein-Rutman theorem (see for example \cite{perthame2007a}) allows to deduce the following result.
\begin{corollary}[Eigenelements]
\label{elements.propre}
For any $\lambda\geq 0$ and $\varepsilon>0$, there exist a unique eigenvalue $\mu^\varepsilon_\lambda>0$ and a unique eigenvector $N^\varepsilon_\lambda\in C[0,\mmax]$ such that
$$
	G^\varepsilon_\lambda \, N^\varepsilon_\lambda(x) 
	= \mu^\varepsilon_\lambda \, N^\varepsilon_\lambda(x) \,,
	\qquad N^\varepsilon_\lambda(x)>0 \,,
	\qquad \max_{x\in[0,\mmax]} N^\varepsilon_\lambda(x)=1 \,.
$$
\end{corollary}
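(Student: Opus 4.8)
The plan is to obtain the pair $(\mu^\varepsilon_\lambda,N^\varepsilon_\lambda)$ as the dominant eigenpair of the compact positive operator $G^\varepsilon_\lambda$ by invoking the strong form of the Krein--Rutman theorem. I would work in $E=C[0,\mmax]$ with the cone $K=\{f\in E:\ f\geq 0\}$, which is closed, proper ($K\cap(-K)=\{0\}$) and has nonempty interior, namely the functions that are strictly positive on all of $[0,\mmax]$.

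First I would record the three properties of $G^\varepsilon_\lambda$ needed as input, all of which are already available. Compactness on $E$ is Lemma \ref{lemme.hyp.aa}: the uniform bound (point \ref{equibornitude}) and the uniform equicontinuity estimate (point \ref{equicontinuite}) give, by Arzel\`a--Ascoli, that $G^\varepsilon_\lambda$ maps bounded sets into relatively compact sets. Positivity, $G^\varepsilon_\lambda(K)\subset K$, is clear from the explicit formula \eqref{def.G.regul} because the kernel there is non-negative. Strong positivity, $G^\varepsilon_\lambda(K\setminus\{0\})\subset\mathrm{int}(K)$, is exactly inequality \eqref{G.positif}: when $\varepsilon>0$ the additive term $\varepsilon/\mmax$ forces $G^\varepsilon_\lambda f(x)>0$ for every $x\in[0,\mmax]$ as soon as $f\geq 0$ and $f\not\equiv 0$. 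This is the only place where the regularization $\varepsilon>0$ enters (compare Remark \ref{remark.hat.v}).

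Next I would check that the spectral radius $r(G^\varepsilon_\lambda)$ is positive, so that the theorem is non-degenerate: picking any $f$ in the interior of $K$, strong positivity makes $G^\varepsilon_\lambda f$ continuous and strictly positive on the compact interval $[0,\mmax]$, hence $G^\varepsilon_\lambda f\geq c\,f$ pointwise for some $c>0$; since $G^\varepsilon_\lambda$ is a positive operator this iterates to $(G^\varepsilon_\lambda)^n f\geq c^n f$, whence $\norme{(G^\varepsilon_\lambda)^n}\geq c^n$ and $r(G^\varepsilon_\lambda)\geq c>0$. The Krein--Rutman theorem in the version for a cone with nonempty interior and a strongly positive compact operator (see \cite{perthame2007a}) then gives: $\mu^\varepsilon_\lambda:=r(G^\varepsilon_\lambda)>0$ is a simple eigenvalue, its eigenspace is one-dimensional and spanned by a function lying in $\mathrm{int}(K)$, i.e. strictly positive on $[0,\mmax]$, and no eigenvalue other than $\mu^\varepsilon_\lambda$ has an eigenvector in $K$. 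Choosing on this eigenline the unique representative with $\max_{x\in[0,\mmax]}N^\varepsilon_\lambda(x)=1$ --- possible because the eigenvector is continuous and strictly positive on a compact interval --- yields the statement, uniqueness of $(\mu^\varepsilon_\lambda,N^\varepsilon_\lambda)$ being a consequence of simplicity of $\mu^\varepsilon_\lambda$ together with the one-dimensionality of the eigenspace.

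Since compactness (Lemma \ref{lemme.hyp.aa}) and strong positivity (inequality \eqref{G.positif}) are already in hand, there is essentially no obstacle here; the only point that deserves care is to invoke the correct strong version of Krein--Rutman --- the one that uses $\mathrm{int}(K)\neq\emptyset$ together with strong positivity to produce simplicity and the uniqueness of the positive eigenpair --- rather than the bare theorem, which would give only the existence of a non-negative eigenvector associated with the spectral radius.
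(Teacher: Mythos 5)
Your proof is correct and takes the same route as the paper: the paper establishes compactness in Lemma \ref{lemme.hyp.aa} and strong positivity via \eqref{G.positif}, and then simply states that the Krein--Rutman theorem yields the corollary. You have supplied the standard details that the paper leaves implicit (the cone with nonempty interior, positivity of the spectral radius, and the use of simplicity for uniqueness), which is exactly the intended argument.
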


\begin{lemma}[Fixed point]
\label{point.fixe}
For any $\varepsilon>0$, there exists $\Lambda_\varepsilon>0$ such that $\mu^\varepsilon_{\Lambda_\varepsilon}=1$, i.e. $\Psi_\varepsilon=N^\varepsilon_{\Lambda_\varepsilon}$ is a fixed point of $G^\varepsilon_{\Lambda_\varepsilon}$:
$$
	G^\varepsilon_{\Lambda_\varepsilon} \Psi_\varepsilon(x) = \Psi_\varepsilon(x)\,.
$$
\end{lemma}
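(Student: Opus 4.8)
The plan is to study the map $\lambda\mapsto\mu^\varepsilon_\lambda$ on $[0,\infty)$ and to show that it is continuous, strictly decreasing, with $\mu^\varepsilon_0=2$ and $\mu^\varepsilon_\lambda\to 0$ as $\lambda\to\infty$; then by the intermediate value theorem there is a (unique) $\Lambda_\varepsilon\in(0,\infty)$ with $\mu^\varepsilon_{\Lambda_\varepsilon}=1$, and $\Psi_\varepsilon:=N^\varepsilon_{\Lambda_\varepsilon}$ is the announced fixed point. By Corollary~\ref{elements.propre}, the pair $(\mu^\varepsilon_\lambda,N^\varepsilon_\lambda)$ is well defined for every $\lambda\geq 0$, and $\mu^\varepsilon_\lambda=\|G^\varepsilon_\lambda N^\varepsilon_\lambda\|_\infty$ since $\max N^\varepsilon_\lambda=1$.

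First I would compute $\mu^\varepsilon_0$ by an integration trick. Integrating the identity $G^\varepsilon_\lambda N^\varepsilon_\lambda=\mu^\varepsilon_\lambda N^\varepsilon_\lambda$ over $x\in[0,\mmax]$ and using Tonelli together with the substitution $\alpha=x/A_t(y)$, one checks that the inner integral $\int_0^\mmax\big(\frac{b(A_t(y))}{A_t(y)}q(\frac{x}{A_t(y)})+\frac{\varepsilon}{\mmax}\big)\dif x$ equals $b(A_t(y))+\varepsilon$, because $A_t(y)\leq\mmax$ forces $\int_0^{\mmax/A_t(y)}q(\alpha)\dif\alpha=1$. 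Writing $(b(A_t(y))+\varepsilon)\,e^{-\int_0^t(\lambda+b(A_s(y))+\varepsilon)\dif s}=-\frac{\dif}{\dif t}e^{-\int_0^t(\lambda+b(A_s(y))+\varepsilon)\dif s}-\lambda\,e^{-\int_0^t(\lambda+b(A_s(y))+\varepsilon)\dif s}$ and integrating in $t$ (the boundary term at $+\infty$ vanishes since $b+\varepsilon\geq\varepsilon>0$) yields
\[
\mu^\varepsilon_\lambda\int_0^\mmax N^\varepsilon_\lambda(y)\dif y=2\int_0^\mmax N^\varepsilon_\lambda(y)\Big[1-\lambda\int_0^\infty e^{-\int_0^t(\lambda+b(A_s(y))+\varepsilon)\dif s}\dif t\Big]\dif y.
\]
Taking $\lambda=0$ and dividing by $\int_0^\mmax N^\varepsilon_0>0$ gives $\mu^\varepsilon_0=2$; the same formula shows $\mu^\varepsilon_\lambda<2$ for $\lambda>0$.

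Next I would treat large $\lambda$: refining Lemma~\ref{lemme.hyp.aa}-\ref{equibornitude} by keeping the factor $e^{-\lambda t}$, one gets $\mu^\varepsilon_\lambda\leq\|G^\varepsilon_\lambda\|\leq 2\big(C_{bq}+\frac{\varepsilon}{\mmax}\big)\int_0^\infty e^{-\lambda t}h(t)\dif t$ with $h(t):=\int_0^\mmax e^{-\int_0^t b(A_s(y))\dif s}\dif y$, and since $\int_0^\infty h<\infty$ by Assumption~\ref{hypotheses}-\ref{hyp.int.finie}, dominated convergence gives $\mu^\varepsilon_\lambda\to 0$ as $\lambda\to\infty$. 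The same dominated convergence argument shows $\lambda\mapsto G^\varepsilon_\lambda$ is continuous in operator norm; since each $G^\varepsilon_\lambda$ is compact and strongly positive (by \eqref{G.positif}), $\mu^\varepsilon_\lambda$ is a simple isolated eigenvalue and hence depends continuously on $\lambda$. For $\lambda<\lambda'$ one has $G^\varepsilon_{\lambda'}f\leq G^\varepsilon_\lambda f$ for $f\geq 0$ with strict inequality at interior points when $f\not\equiv 0$, so the comparison principle for the Krein--Rutman eigenvalue forces $\mu^\varepsilon_{\lambda'}<\mu^\varepsilon_\lambda$. Combining $\mu^\varepsilon_0=2>1$, $\mu^\varepsilon_\lambda\to 0$, continuity and strict monotonicity, there is exactly one $\Lambda_\varepsilon>0$ with $\mu^\varepsilon_{\Lambda_\varepsilon}=1$, which proves the lemma. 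I expect the only nonroutine point to be justifying the continuous dependence of the principal eigenvalue on $\lambda$ (i.e.\ stability of the simple dominant eigenvalue of a strongly positive compact operator); the identity $\mu^\varepsilon_0=2$ and the estimates on $\|G^\varepsilon_\lambda\|$ are elementary.
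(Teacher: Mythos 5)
Your proposal is correct and follows the same overall strategy as the paper: establish the behavior of $\lambda\mapsto\mu^\varepsilon_\lambda$ at $\lambda=0$ and $\lambda\to\infty$, show continuity, and invoke the intermediate value theorem. The differences are in the details. Your identity $\mu^\varepsilon_\lambda\int N^\varepsilon_\lambda=2\int N^\varepsilon_\lambda\big[1-\lambda\int_0^\infty e^{-\int_0^t(\lambda+b_\varepsilon(A_s(y)))\dif s}\dif t\big]\dif y$ is exactly the one the paper derives by integration by parts, but you evaluate it at $\lambda=0$ to get $\mu^\varepsilon_0=2$ on the nose, whereas the paper only takes the limit $\lambda\to 0$; both suffice. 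Your large-$\lambda$ bound using $h(t)=\int_0^\mmax e^{-\int_0^t b(A_s(y))\dif s}\dif y$ is slightly heavier than the paper's $\mu^\varepsilon_\lambda\leq 2(C_{bq}\mmax+\varepsilon)/(\lambda+\varepsilon)$, which simply drops the $b$ term from the exponent and uses $b_\varepsilon\geq\varepsilon$; both give the conclusion. The real divergence is in the continuity step: you invoke norm continuity of $\lambda\mapsto G^\varepsilon_\lambda$ plus stability of the simple dominant eigenvalue of a strongly positive compact operator (spectral perturbation theory), which you rightly flag as the nonroutine point; the paper instead gives a hands-on compactness argument, extracting a convergent subsequence of $G^\varepsilon_{\lambda_n}N^\varepsilon_{\lambda_n}$, showing the limit is a nonzero eigenvector of $G^\varepsilon_\lambda$, and concluding by Krein--Rutman uniqueness. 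The paper's route is more self-contained; yours outsources to a standard but nontrivial perturbation result. Finally, your strict-monotonicity observation (via the pointwise comparison $G^\varepsilon_{\lambda'}f<G^\varepsilon_\lambda f$ for $\lambda<\lambda'$ and the comparison principle for principal eigenvalues) is correct and yields uniqueness of $\Lambda_\varepsilon$, but the lemma only asserts existence, so the paper does not bother with it.
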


\begin{proof}
Let $\varepsilon>0$ be fixed. First, we prove that $\lambda \mapsto \mu_\lambda^\varepsilon$ is continuous. From Lemma \ref{lemme.hyp.aa}, the family $(G^\varepsilon_\lambda\,N^\varepsilon_{\lambda})_{\lambda\geq 0}$ is compact. Let $\lambda \geq 0$ and let $(\lambda_n)_n$ be a non-negative sequence with limit $\lambda$. There exists a subsequence $\alpha_n$ such that 
$\mu^\varepsilon_{\lambda_{\alpha_n}}\,N^\varepsilon_{\lambda_{\alpha_n}} 
= G^\varepsilon_{\lambda_{\alpha_n}}\,N^\varepsilon_{\lambda_{\alpha_n}} \underset{n\to \infty}{\longrightarrow} \Psi\in E$ with $\Psi \geq 0$.
From \eqref{G.positif}
\begin{align*}
\mu^\varepsilon_{\lambda_{\alpha_n}} \,N^\varepsilon_{\lambda_{\alpha_n}}(x)
	&=
	G^\varepsilon_{\lambda_{\alpha_n}}\,N^\varepsilon_{\lambda_{\alpha_n}}(x)
\\
	& \geq
	2\,\frac{\varepsilon}{\mmax} \,
 		\int_0^\infty 
 		e^{-(\lambda+\bar b +\varepsilon)\,t}\, \dif t \,
 		\int_0^\mmax N^\varepsilon_{\lambda_{\alpha_n}}(y)\,\dif y\,.
\end{align*}
Integrating the previous inequality, we deduce that
\begin{align*}
\mu^\varepsilon_{\lambda_{\alpha_n}} 
	& \geq
	2\,\varepsilon \,
 		\int_0^\infty 
 		e^{-(\lambda+\bar b +\varepsilon)\,t}\, \dif t \,.
\end{align*}
Then $\Psi\neq 0$.

From Lemma \ref{lemme.hyp.aa}
$$
	G^\varepsilon_{\lambda}\,\Big(\mu^\varepsilon_{\lambda_{\alpha_n}}
	\,N^\varepsilon_{\lambda_{\alpha_n}}\Big) 
	\underset{n\to\infty}{\longrightarrow} G^\varepsilon_\lambda \Psi\,.
$$
Moreover from \eqref{def.G.regul}
\begin{align*}
\norme{\Big(G^\varepsilon_{\lambda_{\alpha_n}}-G^\varepsilon_{\lambda}\Big)\,
\Big(\mu^\varepsilon_{\lambda_{\alpha_n}}
\,N^\varepsilon_{\lambda_{\alpha_n}}\Big)}_\infty
	&\leq
	2\,\norme{\mu^\varepsilon_{\lambda_{\alpha_n}}
			\,N^\varepsilon_{\lambda_{\alpha_n}}}_\infty \,
		\left( C_{bq}\mmax + \varepsilon \right) \,
		\int_0^\infty 
		e^{-\varepsilon\,t}\,
		\left|e^{-\lambda_{\alpha_n}\,t}-e^{-\lambda\,t}\right|\,\dif t\,.
\end{align*}
From Lemma \ref{lemme.hyp.aa}-\ref{equibornitude}, the sequence $(\mu^\varepsilon_{\lambda_{\alpha_n}}\,N^\varepsilon_{\lambda_{\alpha_n}})_n
= (G^\varepsilon_{\lambda_{\alpha_n}}\,N^\varepsilon_{\lambda_{\alpha_n}})_n$ is bounded.
Moreover, since
$
	e^{-a_1}\,\left(1-e^{-(a_2-a_1)}\right)
	\leq
	a_2-a_1
$ for any $0\leq a_1<a_2$, we have
\begin{align*}
\norme{\Big(G^\varepsilon_{\lambda_{\alpha_n}}-G^\varepsilon_{\lambda}\Big)\,
\Big(\mu^\varepsilon_{\lambda_{\alpha_n}}
\,N^\varepsilon_{\lambda_{\alpha_n}}\Big)}_\infty
	&\leq
		2 \,\norme{\mu^\varepsilon_{\lambda_{\alpha_n}}
			\,N^\varepsilon_{\lambda_{\alpha_n}}}_\infty \,
		\left( C_{bq}\mmax + \varepsilon \right) \,
		\left|\lambda_{\alpha_n}-\lambda\right|\,
		\int_0^\infty 
		t\,e^{-\varepsilon\,t}\,\,\dif t
\\
	& \underset{n\to+\infty}{\longrightarrow} 0 \,.
\end{align*}
Hence
\begin{align*}
\mu^\varepsilon_{\lambda_{\alpha_n}}\,
\underbrace{\mu^\varepsilon_{\lambda_{\alpha_n}}\,
N^\varepsilon_{\lambda_{\alpha_n}}}_{\to \Psi}
	&=
	G^\varepsilon_{\lambda_{\alpha_n}}\,\Big(\mu^\varepsilon_{\lambda_{\alpha_n}}
	\,N^\varepsilon_{\lambda_{\alpha_n}}\Big) 
\\
	&=
	G^\varepsilon_{\lambda}\,\Big(\mu^\varepsilon_{\lambda_{\alpha_n}}
	\,N^\varepsilon_{\lambda_{\alpha_n}}\Big) 
	+
	\Big(G^\varepsilon_{\lambda_{\alpha_n}}-G^\varepsilon_{\lambda}\Big)\,
	\Big(\mu^\varepsilon_{\lambda_{\alpha_n}}
	\,N^\varepsilon_{\lambda_{\alpha_n}}\Big)
\\
	& \underset{n\to+\infty}{\longrightarrow}
	 G^\varepsilon_\lambda \, \Psi \, .
\end{align*}
Then $\Psi>0$.

By existence and uniqueness of eigenelements, we then deduce that
$$
\mu^\varepsilon_{\lambda_{\alpha_n}} \to \mu^\varepsilon_\lambda
$$
and then $\lambda \mapsto \mu^\varepsilon_\lambda$ is continuous.

\medskip

From Lemma \ref{appendix.prop.directes}-\ref{i}, for any $f \in E$, 
$$
	G^\varepsilon_\lambda \, f(x) 
		\leq
		2\,\left(C_{bq}+\frac{\varepsilon}{\mmax}\right)\,
		\int_0^\infty 
		e^{-(\lambda+\varepsilon) \,t} \, \dif t
		\int_0^\mmax
 		f(y)\,
 		\dif y 
	=
	2\,\left(C_{bq}+\frac{\varepsilon}{\mmax}\right)\, 
	\int_0^\mmax f(y)\, \dif y  \,
	\frac{1}{\lambda+\varepsilon}\,.
$$
Then we get
\begin{align}
\label{maj.mu}
	\mu^\varepsilon_\lambda\,\int_0^\mmax N^\varepsilon_\lambda(x) \, \dif x
	 = \int_0^\mmax G^\varepsilon_\lambda \, N^\varepsilon_\lambda(x) \, \dif x
	\leq
	2\,\left(C_{bq}\mmax + \varepsilon\right)\, 
	\frac{1}{\lambda+\varepsilon}\,
	\int_0^\mmax N^\varepsilon_\lambda(x) \, \dif x \, .
\end{align}
Hence
\begin{align}
\label{mu.to.0}
\lim_{\lambda \to \infty} \mu^\varepsilon_\lambda = 0 \, .
\end{align}

On the other hand, by integrating \eqref{def.G.regul} with respect to $x$, we have
\begin{align*}
\int_0^\mmax G^\varepsilon_\lambda \, f(x) \, \dif x
	&=
	2\,\int_0^\infty \int_0^\mmax
 		(b(A_t(y))+\varepsilon)\,f(y)\,
 		e^{-\int_0^t (\lambda + b_\varepsilon(A_s(y) )\,\dif s}\,
 		\dif y\, \dif t
\\
	&=
	2\,\int_0^\infty \int_0^\mmax
		e^{-\lambda\, t}\,f(y)\,
 		\partial_t \left(-
 		e^{-\int_0^t b_\varepsilon(A_s(y) \,\dif s}\right)\,
 		\dif y\, \dif t\,.
\end{align*}
By integration by parts,
\begin{align*}
\int_0^\mmax G^\varepsilon_\lambda \, f(x) \, \dif x
	&=
	2\,\int_0^\mmax f(y) \, \dif y
	-	
	2\,\int_0^\mmax f(y) \,
	\int_0^\infty 
		\lambda \,
		e^{-(\lambda+\varepsilon)\, t}\,\,
 		e^{-\int_0^t b(A_s(y) \,\dif s}\,
 		\dif t \,\dif y\, .
\end{align*}
Therefore
\begin{align*}
\mu_\lambda \, \int_0^\mmax N^\varepsilon_\lambda(x)\,\dif x
	= 
	2\, \int_0^\mmax N^\varepsilon_\lambda(x)\,\dif x 
	- 2\,\lambda\,\int_0^\mmax  N^\varepsilon_\lambda(y) \,
	\int_0^\infty 
		e^{-(\lambda+\varepsilon)\, t}\,\,
 		e^{-\int_0^t b(A_s(y) \,\dif s}\,
 		\dif t \, \dif y   
\end{align*}
and 
\begin{align}
\label{mu.equals.2}
\lim_{\lambda \to 0} \mu^\varepsilon_\lambda = 2 \, .
\end{align}

From \eqref{mu.to.0}, \eqref{mu.equals.2} and since the function $\lambda \to \mu^\varepsilon_\lambda$ is continuous, there exists $\Lambda_\varepsilon>0$ such that $\mu^\varepsilon_{\Lambda_\varepsilon} = 1$.
\end{proof}

\subsubsection{Proof of Theorem \ref{lemme.psi}}
\label{dem.th.vp}

For any $\varepsilon>0$, $(\Lambda_\varepsilon, \Psi_\varepsilon)$ is defined by Lemma \ref{point.fixe}.
From Lemma \ref{lemme.hyp.aa}, the family $(\Psi_\varepsilon)_{0<\varepsilon\leq 1}$ is compact. Moreover, the sequence $\Lambda_\varepsilon$ is bounded. Indeed, it follows from \eqref{maj.mu} that
$$
	0\leq \Lambda_\varepsilon \leq \, 2\,(C_{bq}\,\mmax +1).
$$
Then we can extract a subsequence $(\Lambda_\varepsilon, \Psi_\varepsilon)_\varepsilon$ which converges towards $(\Lambda, \Psi) \in \RR^+ \times C[0,\mmax]$ when $\varepsilon \to 0$.
Moreover,
$$
 \Psi_\varepsilon (x)
 	=
 		2\,\int_0^\infty \int_0^\mmax
 		\left[
 			\frac{b(A_t(y))}{A_t(y)}\,q\left(\frac{x}{A_t(y)} \right)
 			+\frac{\varepsilon}{\mmax}
 		\right]\,\Psi_\varepsilon(y)\,
 		e^{-\int_0^t (\Lambda_\varepsilon+b_\varepsilon(A_s(y) )\,\dif s}\,
 		\dif y\, \dif t \,.
$$
From Assumption \ref{hypotheses}-\ref{hyp.int.finie} and dominated convergence theorem
$$
 \Psi (x)
 	=
 		2\,\int_0^\infty \int_0^\mmax
 			\frac{b(A_t(y))}{A_t(y)}\,q\left(\frac{x}{A_t(y)} \right)
 		\,\Psi(y)\,
 		e^{-\int_0^t (\Lambda+b(A_s(y) )\,\dif s}\,
 		\dif y\, \dif t \,.
$$

\subsection{Proof of Theorem \ref{th.pb.adjoint}}
\label{sec.proof.th.pb.adjoint}

We prove that there exists a non-negative function $\phi\in C[0,\mmax]$ solution of
$$
	\phi(x)
		=
			2\, \int_0^\infty 
				e^{-\int_0^t (\Lambda + D+b(A_s(x)))\,\dif s}\,
				b(A_t(x))\, 
				\int_0^{A_t(x)} \frac{1}{A_t(x)}\,q\left( \frac{y}{A_t(x)}\right)\, \phi(y)\,\dif y\,
				\dif t \, .
$$
Then, by the change of variable $t\mapsto z=A_t(x)$, we can prove that $\phi$ is $C^1(0,\mmax)$ and is solution of
$$
g(x)\,\partial_x \phi(x)
		-(\Lambda+D+\taudiv(x))\, \phi(x)
		=
  		2\,\taudiv(x)\,
  			\int_0^1 q(\alpha)\,\phi(\alpha\,x)\,\dif \alpha\,.
$$

\subsubsection{Regularized problem}

For any $\lambda\geq 0$, $\varepsilon>0$, let $G^{\varepsilon,*}_\lambda$ be the operator defined for $f\in E$ by
$$
	G^{\varepsilon,*}_\lambda \, f(x)
		=
			2\, \int_0^\infty 
				e^{-\int_0^t (\lambda + b_\varepsilon(A_s(x)))\,\dif s}\,
				\int_0^{\mmax} 
					\left[
						\frac{b(A_t(x))}{A_t(x)}\,
						q\left( \frac{y}{A_t(x)}\right)
						+ \frac{\varepsilon}{\mmax}
					\right]						
					\, f(y)\,\dif y\,
				\dif t \,.
$$
Then $G^{\varepsilon,*}_\lambda$ is the adjoint operator of $G^\varepsilon_\lambda$ defined by \eqref{def.G.regul}.

\begin{lemma}
\label{lemme.hyp.aa.adjoint}
\begin{enumerate}
\item For any $\lambda \geq 0$, $\varepsilon \geq 0$, $f \in C[0,\mmax]$, we have
$$
	\norme{G^{\varepsilon,*}_\lambda \, f}_\infty
		\leq 2\,\norme{f}_\infty \, .
$$

\item For any $\lambda_0> 0$, there exists $\tilde \omega_{\lambda_0}\in C(\RR^+)$ such that $\lim_{x \to 0}\tilde \omega_{\lambda_0}(x)=0$ and such that for any $\lambda \geq \lambda_0$, $\varepsilon \geq 0$, $f \in C[0,\mmax]$, $x_1,x_2\in (0,\mmax)$ we have
$$
\left| G^{\varepsilon,*}_\lambda \, f(x_1)
	-G^{\varepsilon,*}_\lambda \, f(x_2) \right|
	\leq
	2\,\norme{f}_\infty\,
	\tilde \omega_{\lambda_0}(|x_1-x_2|) \, .
$$
\end{enumerate}
In particular, $G^{\varepsilon,*}_\lambda$ is compact in E.
\end{lemma}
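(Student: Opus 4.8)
The plan is to establish the two estimates separately and then read off compactness from the Arzel\`a--Ascoli theorem. Write $\psi_z(y):=\frac{b(z)}{z}\,q(y/z)$ for $z\in(0,\mmax]$ and $y\in[0,\mmax]$ (using the standing convention $q(\alpha)=0$ for $\alpha>1$), so that, by the definition of $G^{\varepsilon,*}_\lambda$,
$$
G^{\varepsilon,*}_\lambda f(x)=2\int_0^\infty e^{-\lambda t}\,e^{-\int_0^t b_\varepsilon(A_s(x))\,\dif s}\,\Phi_t(x)\,\dif t ,
\qquad
\Phi_t(x):=\int_0^\mmax\Bigl[\psi_{A_t(x)}(y)+\tfrac{\varepsilon}{\mmax}\Bigr]f(y)\,\dif y .
$$
Since $A_t(x)\le\mmax$, the substitution $\alpha=y/A_t(x)$ gives $\int_0^\mmax\psi_{A_t(x)}(y)\,\dif y=b(A_t(x))\int_0^1 q(\alpha)\,\dif\alpha=b(A_t(x))$, hence $|\Phi_t(x)|\le b_\varepsilon(A_t(x))\,\norme{f}_\infty$. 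For Point~1 I would combine this with the identity $b_\varepsilon(A_t(x))\,e^{-\int_0^t b_\varepsilon(A_s(x))\dif s}=-\partial_t e^{-\int_0^t b_\varepsilon(A_s(x))\dif s}$ and integrate by parts, which yields $\int_0^\infty e^{-\lambda t}\,b_\varepsilon(A_t(x))\,e^{-\int_0^t b_\varepsilon(A_s(x))\dif s}\,\dif t=1-\lambda\int_0^\infty e^{-\lambda t}\,e^{-\int_0^t b_\varepsilon(A_s(x))\dif s}\,\dif t\le 1$, and therefore $\norme{G^{\varepsilon,*}_\lambda f}_\infty\le 2\norme{f}_\infty$.

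For Point~2, fix $\lambda_0>0$, $\lambda\ge\lambda_0$, and put $\delta:=|x_1-x_2|$; the idea is to split $\int_0^\infty=\int_0^T+\int_T^\infty$ for a level $T$ chosen at the very end. On $(T,\infty)$ the same integration by parts shows that each of the two resulting terms is at most $\norme{f}_\infty\int_T^\infty e^{-\lambda t}b_\varepsilon(A_t(x_i))e^{-\int_0^t b_\varepsilon(A_s(x_i))\dif s}\,\dif t\le\norme{f}_\infty e^{-\lambda_0 T}$, so the tail contributes at most $4\,\norme{f}_\infty e^{-\lambda_0 T}$; this truncation is the reason the statement is restricted to $\lambda\ge\lambda_0>0$, and it is unavoidable because the flow modulus used below only controls $|A_t(x_1)-A_t(x_2)|$ on bounded time intervals. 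On $[0,T]$ I would decompose the integrand difference as $\bigl(e^{-\int_0^t b_\varepsilon(A_s(x_1))\dif s}-e^{-\int_0^t b_\varepsilon(A_s(x_2))\dif s}\bigr)\Phi_t(x_1)+e^{-\int_0^t b_\varepsilon(A_s(x_2))\dif s}\bigl(\Phi_t(x_1)-\Phi_t(x_2)\bigr)$. Writing $\omega_b$ for the modulus of uniform continuity of $b$ on $[0,\mmax]$, the elementary inequality $|e^{-a}-e^{-b}|\le|a-b|$ together with Lemma~\ref{lemme.flot.equicontinu} ($|A_s(x_1)-A_s(x_2)|\le\omega_T(\delta)$ for $s\le T$) makes the first bracket $\le\int_0^t|b(A_s(x_1))-b(A_s(x_2))|\,\dif s\le T\,\omega_b(\omega_T(\delta))$; and since the $\varepsilon/\mmax$ terms cancel, $\Phi_t(x_1)-\Phi_t(x_2)=\int_0^\mmax\bigl(\psi_{A_t(x_1)}(y)-\psi_{A_t(x_2)}(y)\bigr)f(y)\,\dif y$, whence the second bracket is $\le\norme{f}_\infty\,\norme{\psi_{A_t(x_1)}-\psi_{A_t(x_2)}}_{L^1(0,\mmax)}$.

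The crux --- and, I expect, the main obstacle --- is to bound $\norme{\psi_{z_1}-\psi_{z_2}}_{L^1(0,\mmax)}$ by an $f$-independent modulus of $|z_1-z_2|$. Assumption~\ref{hypotheses}-\ref{hyp.equicontinue} does not help directly here, since it controls the variation of $q$ in its \emph{numerator} --- exactly what makes the forward operator easy to handle in Lemma~\ref{lemme.hyp.aa} --- whereas in $G^{\varepsilon,*}_\lambda$ the variable $x$ sits in the \emph{denominator}. Instead I would show that $z\mapsto\psi_z$ extends to a continuous map $[0,\mmax]\to L^1(0,\mmax)$ upon setting $\psi_0:=0$: if $z_n\to z_0$ then $\psi_{z_n}\to\psi_{z_0}$ pointwise (continuity of $b$ and $q$, the assumption $q(1)=0$ making the zero-extension of $q$ continuous, and, when $z_0=0$, $\psi_{z_n}(y)=0$ as soon as $z_n<y$), while $|\psi_{z_n}|\le C_{bq}$ by Lemma~\ref{appendix.prop.directes}-\ref{i}; dominated convergence on the finite interval $[0,\mmax]$ then gives $L^1$-convergence. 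Being continuous on the compact $[0,\mmax]$, this map is uniformly continuous, with some modulus $\omega_\psi\in C(\RR^+)$, $\omega_\psi(0^+)=0$, so that $\norme{\psi_{A_t(x_1)}-\psi_{A_t(x_2)}}_{L^1}\le\omega_\psi(\omega_T(\delta))$ for $t\in[0,T]$.

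Assembling the pieces gives $|G^{\varepsilon,*}_\lambda f(x_1)-G^{\varepsilon,*}_\lambda f(x_2)|\le C\,\norme{f}_\infty\bigl(e^{-\lambda_0 T}+T\,\omega_b(\omega_T(\delta))+\omega_\psi(\omega_T(\delta))\bigr)$, where $C$ depends only on $\norme{b}_\infty$ and $\lambda_0$ --- to check that it does not depend on $\varepsilon$ one writes $e^{-\int_0^t b_\varepsilon(A_s(x))\dif s}=e^{-\varepsilon t}e^{-\int_0^t b(A_s(x))\dif s}$ and uses $\int_0^\infty e^{-(\lambda+\varepsilon)t}b_\varepsilon(A_t(x))\,\dif t\le\norme{b}_\infty/\lambda_0+1$. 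Taking $\tilde\omega_{\lambda_0}(\delta)$ to be (a continuous majorant of) the infimum over $T>0$ of the quantity in parentheses yields the claimed inequality, and $\tilde\omega_{\lambda_0}(\delta)\to 0$ as $\delta\to 0$ since, for each fixed $T$, the last two terms vanish in the limit while the first is made arbitrarily small by enlarging $T$. Finally, by Points~1 and~2 the operator $G^{\varepsilon,*}_\lambda$ (for $\lambda>0$) sends bounded subsets of $E$ to uniformly bounded and equicontinuous families of functions on $[0,\mmax]$, so its compactness on $E=C[0,\mmax]$ follows from the Arzel\`a--Ascoli theorem.
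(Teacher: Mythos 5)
Your proof is correct, and it takes a genuinely different route from the paper on the one substantive point: how to control the $L^1$-difference of the adjoint kernel. The overall scaffolding coincides with the paper's: Point~1 is the same integration-by-parts argument; for Point~2, the paper also splits the time integral at a finite level (their $a$, your $T$), also controls the tail via $\int_a^\infty e^{-\lambda t}\,\dif t\le e^{-\lambda_0 a}/\lambda_0$ (which is where $\lambda\ge\lambda_0>0$ is used), and also decomposes the integrand difference into an exponential-factor term and a kernel-difference term.

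Where you diverge is the estimate of $\int_0^\mmax\bigl|\tfrac{b(z_1)}{z_1}q(y/z_1)-\tfrac{b(z_2)}{z_2}q(y/z_2)\bigr|\,\dif y$. You prove that $z\mapsto\psi_z$ extends to a continuous map $[0,\mmax]\to L^1(0,\mmax)$ (pointwise convergence plus the bound $|\psi_z|\le C_{bq}$ from Lemma~\ref{appendix.prop.directes}, then dominated convergence), and invoke compactness of $[0,\mmax]$ to get a modulus $\omega_\psi$. The paper instead extracts the modulus \emph{explicitly} by applying Assumption~\ref{hypotheses}--\ref{hyp.equicontinue} after a change of variable: writing $q(y/z_1)-q(y/z_2)=q(a_1/z_1)-q(a_2/z_1)$ with $a_1=y$ and $a_2=y\,z_1/z_2$, so that $z_1$ now plays the role of the parameter ``$y$'' in the assumption and $|a_1-a_2|\le|z_1-z_2|$. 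They then control the remaining piece $\bigl|\tfrac{b(z_1)}{z_1}-\tfrac{b(z_2)}{z_2}\bigr|$ using the bound $b(x)/x\le C$ from Lemma~\ref{appendix.prop.directes}--\ref{b_sur_x_borne} and uniform continuity of $b$. So your remark that Assumption~\ref{hypotheses}--\ref{hyp.equicontinue} ``does not help directly'' is accurate in spirit but worth qualifying: the paper \emph{does} apply it here, just via the algebraic trick of rewriting the variation in the denominator as a variation in the numerator with the other argument held fixed. What your route buys is a cleaner, softer argument that sidesteps that manipulation; what it gives up is constructiveness (the modulus $\omega_\psi$ is obtained by a compactness argument rather than computed), and it leans on dominated convergence where the paper stays at the level of uniform moduli. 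Your uniformity-in-$\varepsilon$ remark (writing $e^{-\int_0^t b_\varepsilon}=e^{-\varepsilon t}e^{-\int_0^t b}$ and absorbing $(\bar b+\varepsilon)/(\lambda_0+\varepsilon)$) is the same bookkeeping the paper performs implicitly, and your use of Arzel\`a--Ascoli for the final compactness claim matches the paper. Overall: a correct proof with an interesting alternative for the key estimate.
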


\begin{proof}
We compute
\begin{align*}
G^{\varepsilon,*}_\lambda \, f(x)
		&\leq
		2\, \norme{f}_\infty \, \int_0^\infty 
				e^{-\int_0^t b_\varepsilon(A_s(x))\,\dif s}\,
				\int_0^{\mmax} 
					\left[
						\frac{b(A_t(x))}{A_t(x)}\,
						q\left( \frac{y}{A_t(x)}\right)
						+ \frac{\varepsilon}{\mmax}
					\right]\,\dif y\,
				\dif t
\\
	&=
			2\, \norme{f}_\infty \,
			\int_0^\infty
				(b(A_t(x))+\varepsilon)\, 
				e^{-\int_0^t b_\varepsilon(A_s(x))\,\dif s}\,		
				\dif t 
	=
		2\, \norme{f}_\infty\,.
\end{align*}

Let $x_1<x_2$. For any $a\geq 0$,
\begin{align*}
& \left|G^{\varepsilon,*}_\lambda \, f(x_1)-G^{\varepsilon,*}_\lambda \, f(x_2)\right|
	\leq
		2\, \norme{f}_\infty \, (\bar b+\varepsilon)\,
			\int_a^\infty 
				e^{-\lambda\,t}\,
				\dif t
\\	
	&\qquad \qquad
		+
		2\, \norme{f}_\infty\, \int_0^a 
				e^{-\int_0^t (\lambda + b_\varepsilon(A_s(x_1)))\,\dif s}\,
\\
	&\qquad \qquad \qquad \qquad
				\times \left|
				\int_0^{\mmax}  
					\left[
						\frac{b(A_t(x_1))}{A_t(x_1)}\,
						q\left( \frac{y}{A_t(x_1)}\right)
						-
						\frac{b(A_t(x_2))}{A_t(x_2)}\,
						q\left( \frac{y}{A_t(x_2)}\right)
					\right]					
					\,\dif y\, \right|
				\dif t
\\
	&\qquad  \qquad
	+	2\, \norme{f}_\infty\,	
	 \int_0^a
	 			e^{-\lambda\,t}\,
	 			\left|
	 			e^{-\int_0^t b_\varepsilon(A_s(x_1))\,\dif s}
	 			-
				e^{-\int_0^t b_\varepsilon(A_s(x_2))\,\dif s}
				\right|\,
\\
	&\qquad \qquad \qquad \qquad
	\times
		\int_0^{\mmax}
					\left[
						\frac{b(A_t(x_2))}{A_t(x_2)}\,
						q\left( \frac{y}{A_t(x_2)}\right)
						+ \frac{\varepsilon}{\mmax}
					\right]						
					\,\dif y\,
				\dif t\,.
\end{align*}
Hence,
\begin{align*}
& \left|G^{\varepsilon,*}_\lambda \, f(x_1)-G^{\varepsilon,*}_\lambda \, f(x_2)\right|
	\leq
		2\, \norme{f}_\infty \, (\bar b+\varepsilon)\,
			\int_a^\infty 
				e^{-\lambda\,t}\,
				\dif t
\\	
	&\qquad \quad
		+
		2\, \norme{f}_\infty\, \int_0^a 
				e^{-\int_0^t (\lambda + b_\varepsilon(A_s(x_1)))\,\dif s}\,
\\
	&\qquad \qquad \qquad \qquad
		\times\Bigg[
			\int_0^{A_t(x_1)}
			\left|
				\frac{b(A_t(x_1))}{A_t(x_1)}\,
				q\left( \frac{y}{A_t(x_1)}\right)
				-
				\frac{b(A_t(x_2))}{A_t(x_2)}\,
				q\left( \frac{y}{A_t(x_2)}\right)
			\right|					
			\,\dif y
\\	
	&\qquad \qquad \qquad \qquad \quad
		+
		\int_{A_t(x_1)}^{A_t(x_2)}
			\frac{b(A_t(x_2))}{A_t(x_2)}\,
				q\left( \frac{y}{A_t(x_2)}\right)				
		\,\dif y	
		\Bigg]\,
		\dif t
\\
	&\quad \qquad
	+	2\, \norme{f}_\infty\,(\bar b+\varepsilon)\,	
	 \int_0^a
	 			e^{-\lambda\,t}\,
	 			\left|
	 			\int_0^t 
	 				\left(
	 					b_\varepsilon(A_s(x_1))
	 					-
						b_\varepsilon(A_s(x_2))
					\right)\,\dif s
				\right|
				\dif t
\end{align*}
From Assumption \ref{hypotheses}-\ref{hyp.equicontinue}
\begin{align*}
& \int_0^{A_t(x_1)}
		\left|
			\frac{b(A_t(x_1))}{A_t(x_1)}\,
				q\left( \frac{y}{A_t(x_1)}\right)
			-
			\frac{b(A_t(x_2))}{A_t(x_2)}\,
				q\left( \frac{y}{A_t(x_2)}\right)
		\right|					
	\,\dif y
\\
 & \qquad \leq
	\int_0^{A_t(x_1)}
			\frac{b(A_t(x_1))}{A_t(x_1)}\,
				\left|
					q\left( \frac{y}{A_t(x_1)}\right)
					-
					q\left( \frac{y}{A_t(x_2)}\right)
				\right|
	\dif y
\\
	&\qquad \quad
	+
	\int_0^{A_t(x_1)}
			\left|			
			\frac{b(A_t(x_1))}{A_t(x_1)}\,
			-
			\frac{b(A_t(x_2))}{A_t(x_2)}\,
			\right|
				q\left( \frac{y}{A_t(x_2)}\right)					
	\,\dif y
\\
 & \qquad \leq
 \int_0^{A_t(x_1)}
			\omega\left(\left| y-y\,\frac{A_t(x_1)}{A_t(x_2)} \right| \right)
	\dif y
	+
		\left|			
			\frac{A_t(x_2)}{A_t(x_1)}\,b(A_t(x_1))\,
			-
			b(A_t(x_2))
		\right|\,.
\end{align*}
From Lemma \ref{appendix.prop.directes}-\ref{b_sur_x_borne}, we then get
\begin{align*}
& \left|G^{\varepsilon,*}_\lambda \, f(x_1)-G^{\varepsilon,*}_\lambda \, f(x_2)\right|
	\leq
		2\, \norme{f}_\infty \, (\bar b+\varepsilon)\,
			\int_a^\infty 
				e^{-\lambda\,t}\,
				\dif t
\\	
	& \qquad \quad
		+
		2\, \norme{f}_\infty\, \int_0^a 
		\Bigg[
		\int_0^{A_t(x_1)}
			\omega\left(\left| \frac{y}{A_t(x_2)}\,\left(A_t(x_2)-A_t(x_1)\right) \right| \right)
			\dif y
\\
	&\qquad \qquad \qquad \qquad \quad
		+\left(C+C_{bq}\right) \,
			\left|A_t(x_2)-A_t(x_1)\right|
			+
			\left|b(A_t(x_1)-b(A_t(x_2)\right|
		\Bigg]\,
		\dif t
\\
	&\quad \qquad
	+	2\, \norme{f}_\infty\,(\bar b+\varepsilon)\,	
	 \int_0^a
	 			\left|
	 			\int_0^t 
	 				\left(
	 					b(A_s(x_1))
	 					-
						b(A_s(x_2))
					\right)\,\dif s
				\right|
				\dif t \, .
\end{align*}

Since $b\in C[0,\mmax]$ and from Lemma \ref{lemme.flot.equicontinu}, the flow $A$ is uniformly equicontinuous on $[0,a]\times [0,\mmax]$, there exists $\omega_a \in C(\RR^+)$ such that $\lim_{x \to 0}\omega_a(x)=0$ and
\begin{align*}
\left|G^{\varepsilon,*}_\lambda \, f(x_1)-G^{\varepsilon,*}_\lambda \, f(x_2)\right|
	&\leq
		2\, \norme{f}_\infty \, C_{\text{st}}\,
		\left[			
			\int_a^\infty 
				e^{-\lambda\,t}\,
				\dif t
			+
			 \omega_a(|x_1-x_2|)
		\right]\,.
\end{align*}

For any $\delta>0$, there exists $a_\delta>0$ such that 
$$
	\int_{a_\delta}^\infty 
				e^{-\lambda\,t}\,
				\dif t \leq \frac{\delta}{2} \, .
$$
Therefore, for any $\delta>0$ we have
\begin{align*}
\left|G^{\varepsilon,*}_\lambda \, f(x_1)-G^{\varepsilon,*}_\lambda \, f(x_2)\right|
	&\leq
		2\, \norme{f}_\infty \, C_{\text{st}}\,
		\left[	
 		\frac{\delta}{2}
 		+ \omega_{a_\delta}(|x_1-x_2|)
 		\right]\, .
\qedhere
\end{align*}
\end{proof}
\medskip

For any $\varepsilon>0$ and $f\in C[0,\mmax]$ such that $f\geq 0$ and $f\neq 0$, we have 
\begin{align*}
G^{\varepsilon,*}_\lambda \, f(x)
		\geq
			2\, \frac{\varepsilon}{\mmax}\, 
			\int_0^\infty 
				e^{-\int_0^t (\lambda + b_\varepsilon(A_s(x)))\,\dif s}\,
				\int_0^{\mmax} 					
					 f(y)\,\dif y\,
				\dif t >0 \,.
\end{align*}

From Krein-Rutman theorem, we deduce the following result. 
\begin{corollary}[Eigenelements]
For any $\lambda> 0$, $\varepsilon>0$, there exist a unique eigenvalue $\tilde\mu^\varepsilon_\lambda>0$ and a unique eigenvector $\phi^\varepsilon_\lambda\in C[0,\mmax]$ such that
$$
	G^{\varepsilon,*}_\lambda \phi^\varepsilon_\lambda (x) 
	= \tilde \mu^\varepsilon_\lambda \, \phi^\varepsilon_\lambda (x) \,,
	\qquad \phi^\varepsilon_\lambda(x)>0 \,,
	\qquad \max_{x\in[0,\mmax]} \phi^\varepsilon_\lambda(x)=1 \,.
$$
\end{corollary}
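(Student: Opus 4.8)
The plan is to obtain the corollary as a direct application of the Krein--Rutman theorem (see \cite{perthame2007a}) to the operator $G^{\varepsilon,*}_\lambda$ on the Banach space $E=C[0,\mmax]$, using the cone $\mathcal{C}=\{f\in E:f\geq 0\}$ of non-negative continuous functions. This cone is closed, total, satisfies $\mathcal{C}\cap(-\mathcal{C})=\{0\}$, and --- crucially --- has nonempty interior: since $[0,\mmax]$ is compact, $\mathrm{int}(\mathcal{C})$ consists exactly of the $f\in C[0,\mmax]$ with $\inf_{[0,\mmax]}f>0$, equivalently with $f>0$ everywhere on $[0,\mmax]$.

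First I would check that $G^{\varepsilon,*}_\lambda$ is a bounded compact linear operator on $E$. Linearity is clear from the defining integral formula. Boundedness and compactness are precisely what Lemma \ref{lemme.hyp.aa.adjoint} provides: its first estimate gives $\norme{G^{\varepsilon,*}_\lambda f}_\infty\leq 2\,\norme{f}_\infty$, so the image of the unit ball is uniformly bounded, and its second estimate, applied with $\lambda_0=\lambda>0$, shows that this image is uniformly equicontinuous; Arzel\`a--Ascoli then yields compactness.

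Next I would verify strong positivity, i.e.\ that $G^{\varepsilon,*}_\lambda$ maps $\mathcal{C}\setminus\{0\}$ into $\mathrm{int}(\mathcal{C})$. Starting from the lower bound displayed just before the statement, for $f\geq 0$ with $f\neq 0$ one has
\begin{align*}
G^{\varepsilon,*}_\lambda f(x)
&\geq
\frac{2\varepsilon}{\mmax}\int_0^\infty e^{-\int_0^t (\lambda+b_\varepsilon(A_s(x)))\,\dif s}\,\dif t\int_0^\mmax f(y)\,\dif y
\\
&\geq
\frac{2\varepsilon}{\mmax\,(\lambda+\bar b+\varepsilon)}\int_0^\mmax f(y)\,\dif y
>0,
\end{align*}
where I bounded $b_\varepsilon(A_s(x))\leq\bar b+\varepsilon$ inside the exponential. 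The right-hand side is a positive constant independent of $x$, so $G^{\varepsilon,*}_\lambda f$ is bounded below by a positive constant, hence lies in $\mathrm{int}(\mathcal{C})$. This is exactly where the regularizing term $\varepsilon/\mmax$ in the definition of $G^{\varepsilon,*}_\lambda$ is used, mirroring the role of \eqref{G.positif} for the direct problem.

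With these two properties in hand, the Krein--Rutman theorem gives at once that the spectral radius $\tilde\mu^\varepsilon_\lambda:=r(G^{\varepsilon,*}_\lambda)$ is strictly positive, is a simple eigenvalue, is the only eigenvalue possessing a non-negative eigenvector, and admits an eigenvector $\phi^\varepsilon_\lambda\in\mathrm{int}(\mathcal{C})$, so that $\phi^\varepsilon_\lambda>0$ on $[0,\mmax]$; simplicity makes the eigenspace one-dimensional, so imposing $\max_{[0,\mmax]}\phi^\varepsilon_\lambda=1$ pins down $\phi^\varepsilon_\lambda$ uniquely, and the statement follows. I expect no real obstacle here: the argument is entirely parallel to the one behind Corollary \ref{elements.propre}, and the only mildly delicate point is making the positivity estimate uniform in $x$ (passing from the $x$-dependent exponential to the bound $\bar b+\varepsilon$), which is what guarantees that the image of $G^{\varepsilon,*}_\lambda$ sits in the interior of the cone rather than merely in the set of strictly positive functions.
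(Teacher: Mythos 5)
Your proof is correct and follows essentially the same route as the paper, which after establishing compactness (Lemma \ref{lemme.hyp.aa.adjoint}) and the strict positivity estimate simply invokes the Krein--Rutman theorem. The only extra step you add --- passing to the uniform lower bound $\frac{2\varepsilon}{\mmax(\lambda+\bar\taudiv+\varepsilon)}\int_0^\mmax f$ --- is a harmless refinement, since pointwise strict positivity of the continuous function $G^{\varepsilon,*}_\lambda f$ on the compact $[0,\mmax]$ already implies a uniform positive lower bound.
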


\begin{lemma}[Fixed point]
\label{point.fixe.adjoint}
For any $\varepsilon>0$, let $\Lambda_\varepsilon>0$ be as defined by Lemma \ref{point.fixe}. Then 
$\tilde\mu^\varepsilon_{\Lambda_\varepsilon}=\mu^\varepsilon_{\Lambda_\varepsilon}=1$ and $\phi_\varepsilon=\phi^\varepsilon_{\Lambda_\varepsilon}$ satisfies
$$
	G^{\varepsilon,*}_{\Lambda_\varepsilon} \phi_\varepsilon(x) = \phi_\varepsilon(x)\,.
$$
\end{lemma}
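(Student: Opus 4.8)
The plan is to use the observation, recorded just after the definition of $G^{\varepsilon,*}_\lambda$, that $G^{\varepsilon,*}_\lambda$ is the adjoint of $G^\varepsilon_\lambda$ for the pairing $\crochet{f,g}=\int_0^\mmax f(x)\,g(x)\,\dif x$ on $E=C[0,\mmax]$. This adjointness is obtained by applying Fubini's theorem to \eqref{def.G.regul}, which is legitimate because the relevant double integral converges absolutely by Lemma \ref{appendix.prop.directes}-\ref{i} and Assumption \ref{hypotheses}-\ref{hyp.int.finie}. I would first prove that $\tilde\mu^\varepsilon_\lambda=\mu^\varepsilon_\lambda$ for \emph{every} $\lambda>0$, and then specialize to $\lambda=\Lambda_\varepsilon$.

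So fix $\varepsilon>0$ and $\lambda>0$, let $N^\varepsilon_\lambda$ be the eigenvector of $G^\varepsilon_\lambda$ associated with $\mu^\varepsilon_\lambda$ given by Corollary \ref{elements.propre}, and let $\phi^\varepsilon_\lambda$ be the eigenvector of $G^{\varepsilon,*}_\lambda$ associated with $\tilde\mu^\varepsilon_\lambda$ given by its adjoint counterpart; both are continuous and strictly positive on $[0,\mmax]$. Using the two eigenvalue equations together with the adjointness relation $\crochet{G^\varepsilon_\lambda f,g}=\crochet{f,G^{\varepsilon,*}_\lambda g}$,
\begin{align*}
\mu^\varepsilon_\lambda\,\crochet{N^\varepsilon_\lambda,\phi^\varepsilon_\lambda}
=\crochet{G^\varepsilon_\lambda N^\varepsilon_\lambda,\phi^\varepsilon_\lambda}
=\crochet{N^\varepsilon_\lambda,G^{\varepsilon,*}_\lambda\phi^\varepsilon_\lambda}
=\tilde\mu^\varepsilon_\lambda\,\crochet{N^\varepsilon_\lambda,\phi^\varepsilon_\lambda}\,.
\end{align*}
Since $N^\varepsilon_\lambda>0$ and $\phi^\varepsilon_\lambda>0$ on $(0,\mmax)$, and both are bounded on the compact $[0,\mmax]$, we have $0<\crochet{N^\varepsilon_\lambda,\phi^\varepsilon_\lambda}<\infty$, hence $\mu^\varepsilon_\lambda=\tilde\mu^\varepsilon_\lambda$.

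Finally, taking $\lambda=\Lambda_\varepsilon$ and invoking Lemma \ref{point.fixe}, which asserts $\mu^\varepsilon_{\Lambda_\varepsilon}=1$, we obtain $\tilde\mu^\varepsilon_{\Lambda_\varepsilon}=\mu^\varepsilon_{\Lambda_\varepsilon}=1$, so that $\phi_\varepsilon:=\phi^\varepsilon_{\Lambda_\varepsilon}$ is a fixed point of $G^{\varepsilon,*}_{\Lambda_\varepsilon}$. There is no real obstacle in this argument: the only points needing care are the absolute convergence used to identify the adjoint via Fubini and the strict positivity of the two eigenvectors (which guarantees that the pairing does not vanish), both of which are already available from Assumptions \ref{hypotheses}, from Corollary \ref{elements.propre}, and from the Krein--Rutman construction of $\phi^\varepsilon_\lambda$. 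Equivalently, one could simply note that a compact operator and its adjoint have the same spectral radius and that, by the Krein--Rutman theorem, this spectral radius coincides with the unique eigenvalue admitting a positive eigenvector.
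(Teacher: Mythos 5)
Your proof is correct and follows essentially the same route as the paper: both pair the eigenvalue equations for $G^\varepsilon_\lambda$ and $G^{\varepsilon,*}_\lambda$ against $N^\varepsilon_\lambda$ and $\phi^\varepsilon_\lambda$, use adjointness to swap operators under the integral, cancel the strictly positive pairing to get $\tilde\mu^\varepsilon_\lambda=\mu^\varepsilon_\lambda$, and then invoke Lemma \ref{point.fixe} at $\lambda=\Lambda_\varepsilon$. Your additional remarks on Fubini and on the coincidence of spectral radii are sound but not needed beyond what the paper already records.
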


\begin{proof}
For any $\lambda >0$, $\varepsilon>0$, let $N_\lambda^\varepsilon$ be as defined by Corollary \ref{elements.propre}. We have
\begin{align*}
\tilde \mu^\varepsilon_\lambda \,
\int_0^\mmax \phi^\varepsilon_\lambda(x)\,N^\varepsilon_\lambda(x) \, \dif x
	&=
		\int_0^\mmax 
			G^{\varepsilon, *}_\lambda \phi^\varepsilon_\lambda(x)\,
			N^\varepsilon_\lambda(x) \, \dif x
\\
	&=
		\int_0^\mmax 
			\phi^\varepsilon_\lambda(x)\,
			G^\varepsilon_\lambda N^\varepsilon_\lambda(x) \, \dif x
\\
	&=
		\mu^\varepsilon_\lambda \, 
		\int_0^\mmax 
			\phi^\varepsilon_\lambda(x)\,
			N^\varepsilon_\lambda(x) \, \dif x\,.
\end{align*}
Hence
$$
	\tilde \mu^\varepsilon_\lambda=\mu^\varepsilon_\lambda \,.
$$
From Lemma \ref{point.fixe}, there exists $\Lambda_\varepsilon$ such that
$\tilde \mu^\varepsilon_{\Lambda_\varepsilon}=1$.
\end{proof}

\subsubsection{End of the proof of the Theorem \ref{th.pb.adjoint}}

For any $\varepsilon>0$, $(\Lambda_\varepsilon, \Phi_\varepsilon)$ is defined by Lemma \ref{point.fixe.adjoint}. From Section \ref{dem.th.vp} and Corollary \ref{eigenelements}, we can extract a subsequence $(\Lambda_{\varepsilon_i})_i$ which converges towards $\Lambda>0$ when $\varepsilon_i \to 0$. We can assume that the elements of this subsequence are positive and then there exists a lower bound $\lambda_0>0$ of this subsequence. From Lemma \ref{lemme.hyp.aa.adjoint}, the family $(\phi_{\varepsilon_i})_i$ is compact, we can then extract a subsequence of $(\Lambda_{\varepsilon_i}, \Psi_{\varepsilon_i})_\varepsilon$ which converges towards $(\Lambda, \phi) \in \RR^+ \times C[0,\mmax]$ when $\varepsilon_i \to 0$.
Moreover,
$$
\phi_\varepsilon(x)
		=
			2\, \int_0^\infty 
				e^{-\int_0^t (\Lambda_\varepsilon + b_\varepsilon(A_s(x)))\,\dif s}\,
				\int_0^{\mmax} 
					\left[
						\frac{b(A_t(x))}{A_t(x)}\,
						q\left( \frac{y}{A_t(x)}\right)
						+ \frac{\varepsilon}{\mmax}
					\right]						
					\, \phi_\varepsilon(y)\,\dif y\,
				\dif t \,. 
$$
From Assumption \ref{hypotheses}-\ref{hyp.int.finie}
$$
	\int_0^\mmax e^{-\int_0^t b(A_s(x)))\,\dif s}\, \dif t <\infty
	\quad \text{for almost any } x\in [0,\mmax]\,.
$$
Then, the dominated convergence theorem entails that, for almost any $x\in[0,\mmax]$,
$$
\phi(x)
		=
			2\, \int_0^\infty 
				e^{-\int_0^t (\Lambda + b(A_s(x)))\,\dif s}\,
				\int_0^{A_t(x))}
						\frac{b(A_t(x))}{A_t(x)}\,
						q\left( \frac{y}{A_t(x)}\right)					
					\, \phi(y)\,\dif y\,
				\dif t\,. 
$$
Moreover, $\phi$ is continuous, as well as the function defined by the right term. 
Then the previous inequality is true for any $x\in [0,\mmax]$.
Therefore, $\phi$ is solution of \eqref{ann.eq.eigenproblem.adjoint}.

\section{Application for adaptive dynamics to a chemostat model}
\label{sec.chemostat}

In a more general context, the growth-fragmentation-death model of Section \ref{subsec.mecha} can describe a mutant population in a variable environment.
For instance, in the study of a chemostat, we can consider the following model structured by trait and mass: each individual is characterized by a phenotypic trait $c\in\C$, where $\C$ is some trait space, say a measurable subset of $\RR^d$, $d \geq 1$, and by its mass $x \in [0,\mmax]$.
We consider the following mechanisms:
\begin{enumerate}
\item \label{da.item.division} \textbf{Division/mutation:} each individual $(c,x)$ divides, at rate $\taudiv(S,c,x)$, into two individuals with masses $\alpha \, x$ and $(1-\alpha)\,x$, where the proportion $\alpha$ is distributed according to a kernel $Q(c, \dif \alpha)=q(c, \alpha)\,\dif \alpha$ and $S$ is the substrate concentration in the chemostat.
\begin{itemize}
\item With probability $\mut \in [0,1]$, the daughter cell $\alpha\,x$ is a mutant bacterium, with trait $c+h \in \C$, where $h$ is distributed according to a kernel $\kappa(c, h)\,\dif h$ and the daughter cell $(1-\alpha)\,x$ has the same trait $c$ as its mother.

\item With probability $1-\mut$, the two daughter cells have the same trait $c$ as the mother cell. 
\begin{center}
\includegraphics[width=6cm]{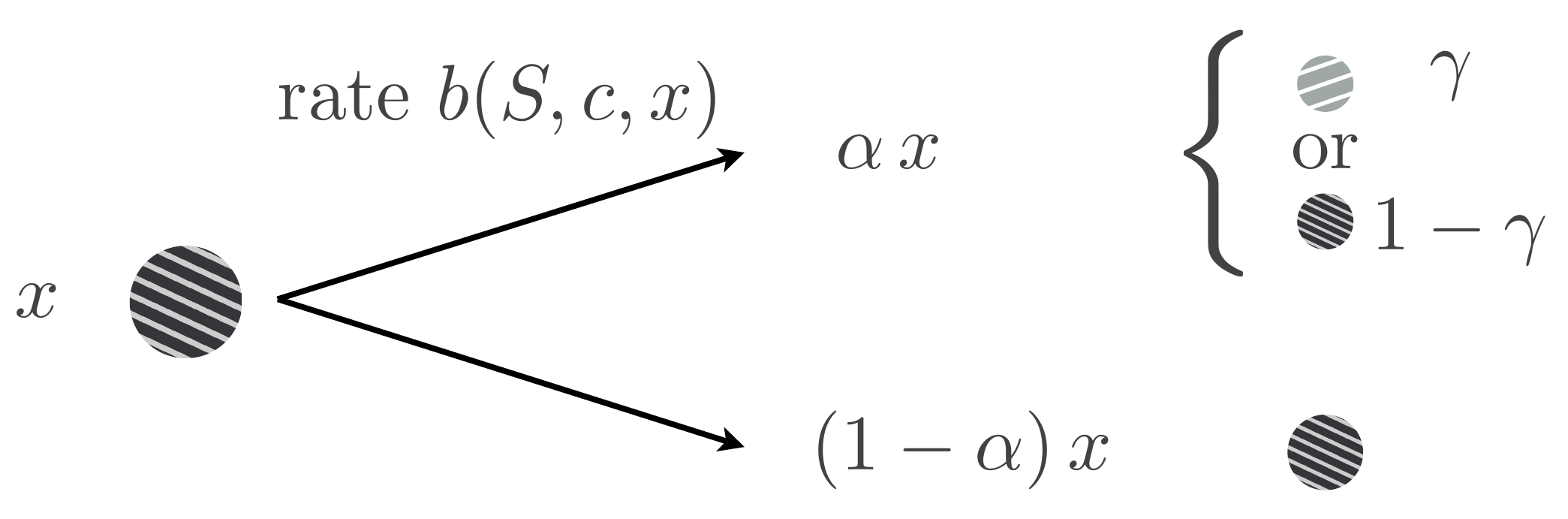}
\end{center}
\end{itemize}

\item \label{da.item.soutirage} \textbf{Washout:} each individual $(c,x)$ is withdrawn from the chemostat at rate $D$, where $D$ is the dilution rate of the chemostat.

\item \label{da.item.croissance} \textbf{Growth:} between the times of division and washout, the mass of individuals grows at speed $g$:
\begin{align*}
  \frac{\dif}{\dif t} X_t^i 
  = 
  g(S_t, C_t^i, X_t^i)\,, \quad i=1,\dots, N_t\,.
\end{align*}

\item \textbf{Substrate dynamics:} the substrate concentration evolves according to the following equation:
\begin{align*}
  \frac{\dif}{\dif t} S_t^n
  =
  D\,(\Sin-S_t) - \frac{k}{n\,V} \, 
						\sum_{i=1}^{N_t} \rhog(S_t,C_t^i,X_t^i)
\end{align*}
where $\left\{(C_t^i,\,X_t^i),\, 1\leq i \leq N_t \right\}$ is the set of traits and masses of the individuals in the population at time $t$, $\Sin$ is the substrate input concentration, $k$ is a stoichiometric coefficient, $n\,V$ is the volume of the vessel and $n$ is a parameter scaling the volume of the vessel.
\end{enumerate}
See \cite{fritschThesis} for more details on the stochastic process.

\medskip

Without mutation, i.e. for $\mut=0$, the previous model was studied by \cite{campillo2014d}. The authors have proved that in the limit $n\to\infty$, the individual-based model associated to the previous mechanisms converges to the following integro-differential model:
\begin{align} 
\label{eq.eid.substrat}
	&
	\frac{\dif}{\dif t} S_t  = 
	D\,(\Sin-S_t)-\frac kV \int_0^{\mmax} g(S_t,x)\,
	\densiteIDE_t(x)\,\dif x\,,
\\
\nonumber
	&
	\frac{\partial}{\partial t} \densiteIDE_t(x)
	+\frac{\partial}{\partial x} \bigl( g(S_t,x)\,\densiteIDE_t(x)\bigr)
	+ \bigl(\taudiv(S_t, x)+D \bigr)\,\densiteIDE_t(x)
\\
\label{eq.eid.pop}
  	&\qquad\qquad\qquad\qquad\qquad\qquad\qquad
	=  
	2\,\int_x^{\mmax}
		\frac{\taudiv(S_t,z)}{z} \, 
		q\left(\frac{x}{z} \right) \,
		\densiteIDE_t(z)\,\dif z \,,
\end{align}
where $r_t$ represents the mass density of the population at time $t$. 

\bigskip

The dynamics of the population trait can be described by the \textit{trait substitution sequence} process whose principle is the following. We assume that the initial population is large and monomorphic and that mutations are rare, so that the initial population reaches and stays in a neighborhood of this stationnary state $(S^*_{c_0}, r^*_{c_0})$ before a mutation occurs.

Under the previous assumptions, just after the first mutation time, the number of mutant individuals, with trait $c$, is negligible with respect to the number of individuals with trait $c_0$, called resident population, as long the mutant population remains small.
The effect of the mutant population on the stationary state $(S^*_{c_0}, r^*_{c_0})$ is then negligible (see \cite{Metz1996a,geritz1998a}).

The mutant population can then be approached, just after the mutation time, by the process \eqref{da.def.proc.S.nu} with
\begin{align*} 
   \taudiv(x)      &\eqdef \taudiv(S^*_{c_0},c,x)\,,\\
   q(\alpha) &\eqdef q(c,\alpha)\,,\\
   g(x)      &\eqdef g(S^*_{c_0},c,x)\,.
\end{align*}

Once the mutant population invades, the two traits are in competition. 
In general, the coexistence of both traits $c_0$ and $c$ is not possible, because of the competitive exclusion principle. In fact, \citet{Hsu1977a} have proved a competitive exclusion principle for $g(S,x)=\mu(S)\,x$ (see also \citet{smith1995a}).

If the competitive exclusion principle is satisfied, then two cases are possible at the time of the mutation:
\begin{itemize}
\item \textbf{1rst case :} The mutant population goes to extinction.

\item \textbf{2nd case :} The mutant population invade the resident one. In this case the resident population goes to extinction and the substrate/mutant population pair reaches a neighborhood of its new stationary state. The mutant population then becomes the resident population for the next mutation.
\end{itemize}

\bigskip

The convergence towards a unique stationary state was proved for non-structured chemostat model (see for example \citet{smith1995a}). The rare mutations assumption is related to the initial population size, which is assumed to be large. 
We refer to \citet{champagnat2006b} and \citet{TranChi2009a} for the precise dependence of the mutation probability $\gamma$ on the parameter $n$.

The fact that the initial population stays in a neighborhood of this stationary state before the mutation occurs and that the impact of the mutant population under the initial population is negligible may be justified by large deviations estimates, as in a simplest age-structured context by \citet{vietchitran2008a} and \cite{TranChi2009a}.
We leave this delicate issue for further work.

\medskip

Just after the mutation time, the mutant population is small and subject to strong randomness. Hence, a deterministic model is irrelevant and it is better to modelize it by the stochastic model described in Section \ref{subsec.IBM}. However, the study of the deterministic model described in Section \ref{subsec.edp} brings information on the possibility of invasion of the mutant population, as we have seen in Theorem \ref{th.critere}.

\section*{Acknowledgements}
The work of Nicolas Champagnat was partially funded by project MANEGE ‘Mod\`eles Al\'eatoires en \'Ecologie, G\'en\'etique et \'Evolution’ 09-BLAN-0215 of ANR (French national research agency).
The work of Coralie Fritsch was partially supported by the Meta-omics of Microbial Ecosystems (MEM) metaprogram of INRA.

\end{document}